\let\oldbibliography\thebibliography
\renewcommand{\thebibliography}[1]{\oldbibliography{#1}\setlength{\itemsep}{0pt}}
\newtheorem{theorem}{Theorem}[section]
\newtheorem{corollary}[theorem]{Corollary}
\newtheorem{definition}[theorem]{Definition}
\newtheorem{lemma}[theorem]{Lemma}
\newtheorem{remark}[theorem]{Remark}
\numberwithin{equation}{section}
\newcommand{\N}{\mathbb N}
\newcommand{\Z}{\mathbb Z}
\newcommand{\Q}{\mathbb Q}
\newcommand{\R}{\mathbb R}
\newcommand{\Sp}{\mathbb S}
\begin{document}
\title{\textbf{Asymptotic expansions for conformal scalar curvature equations near isolated singularities}\bigskip}

\author{\medskip Xusheng Du \quad and \quad Hui Yang\footnote{H. Yang is partially supported by NSFC grant 12301140.}}

\date{\today}

\maketitle

\begin{center}
\begin{minipage}{130mm}

\begin{center}{\bf Abstract}\end{center}
In this paper, we study asymptotic expansions of positive solutions of the conformal scalar curvature equation
$$
- \Delta u = K(x) u^\frac{n + 2}{n - 2} ~~~~~~ \textmd{in} ~ B_1 \setminus \{ 0 \}
$$
with an isolated singularity at the origin. Under certain flatness conditions on $K$, we establish a higher-order expansion of solutions near the origin. In particular, we give the refined second-order asymptotic expansion of solutions when $n \geq 6$. Moreover, we also obtain an arbitrary-order expansion of singular positive solutions of the anisotropic elliptic equation
$$
- \,{\rm div} (|x|^{- 2 a} \nabla u) = |x|^{- b p} u^{p - 1} ~~~~~~ \textmd{in} ~ B_1 \setminus \{ 0 \},
$$
where $0 \leq a < \frac{n - 2}{2}$, $a \leq b < a + 1$ and $p = \frac{2 n}{n - 2 + 2 (b - a)}$. This equation is arising from the celebrated Caffarelli-Kohn-Nirenberg inequality.

\medskip

\noindent{\it Keywords}: Asymptotic expansions, scalar curvature equations, isolated singularities, critical exponents

\medskip

\noindent {\it MSC (2020)}: 35J61; 35C20

\end{minipage}
\end{center}

\tableofcontents

\section{Introduction}

In this paper, we are interested in the asymptotic behavior of solutions of the conformal scalar curvature equation
\begin{equation}\label{eq:csc}
- \Delta u = K(x) u^\frac{n + 2}{n - 2}, ~~~ u > 0 ~~~ \textmd{in} ~ B_1 \setminus \{ 0 \}
\end{equation}
with an isolated singularity at the origin, where $B_1$ is the open unit ball in $\R^n$ with $n \geq 3$ and $K \in C^1 (B_1)$ is a {\it positive} function. The equation \eqref{eq:csc} appears in the problem of finding a metric $g = u^{4/(n - 2)} \delta_{ij}$ conformal to the flat metric $\delta_{ij}$ on $\R^n$ such that $K(x)$ is the scalar curvature of the new metric. The classical works of Schoen and Yau \cite{Sch88,SY88} on complete conformally flat manifolds have indicated the importance of studying the singular distributional solutions of \eqref{eq:csc}. Solutions with an isolated singularity are the simplest examples of those singular solutions. The construction of singular solutions has been studied in \cite{Sch88,MP96,MP,CD,CL99,P} and the references therein.

The completeness of the conformal metric $g$ implies that $u$ is singular at the origin. Thus, a natural question is to understand how $u(x)$ tends to infinity when $x$ approaches the origin? When $K(x)$ is identically equal to $1$ (i.e., $K \equiv 1$), Caffarelli, Gidas and Spruck in their pioneering paper \cite{CGS} proved that every solution $u \in C^2 (B_1 \setminus \{ 0 \})$ of \eqref{eq:csc} with a non-removable singularity is asymptotically radially symmetric near $0$ and furthermore, there exists a global singular solution $u_0$ of
\begin{equation}\label{gloeq10}
\begin{cases}
- \Delta u_0 = u_0^\frac{n + 2}{n - 2}, ~~~ u_0 > 0 ~~~ \textmd{in} ~ \R^n \setminus \{ 0 \}, \\
u_0 \in C^2 (\R^n \setminus \{ 0 \}) ~~~ \textmd{and} ~~~ \lim_{|x| \to 0^+} u_0 (x) = + \infty
\end{cases}
\end{equation}
such that
\begin{equation}\label{asy=glo}
u(x) = u_0 (x) (1 + o(1)) ~~~~~~ \textmd{as} ~ |x| \to 0^+.
\end{equation}
They also showed that all solutions of \eqref{gloeq10} are radially symmetric, and hence a complete classification could be given via the standard phase-plane analysis. These global singular solutions are usually called the Fowler solutions or Delaunay type solutions. Subsequent to \cite{CGS}, second-order Yamabe type equations with isolated singularities have been extensively studied; see, for example, \cite{CHanY,HLL,HLT,HXZ,KMPS,LC96,Mar,XZ} and the references therein. In particular, Korevaar, Mazzeo, Pacard and Schoen in \cite{KMPS} studied refined asymptotics and expanded singular solutions $u$ to the first order. Recently, Han, Li and Li \cite{HLL} established the expansions up to arbitrary orders.

When the scalar curvature $K(x)$ is a positive and nonconstant $C^1$ function, Chen-Lin \cite{CL97,CL98,CL99a,L00} and Taliaferro-Zhang \cite{TZ06,Zhang02} investigated under what conditions on $K(x)$ every singular solution $u$ of \eqref{eq:csc} are still asymptotic to a radial singular solution $u_0$ of
\begin{equation}\label{eq:u0}
\begin{cases}
- \Delta u_0 = K(0) u_0^\frac{n + 2}{n - 2}, ~~~ u_0 > 0 ~~~ \textmd{in} ~ \R^n \setminus \{ 0 \}, \\
u_0 \in C^2 (\R^n \setminus \{ 0 \}) ~~~ \textmd{and} ~~~ \lim_{|x| \to 0^+} u_0 (x) = + \infty.
\end{cases}
\end{equation}
To further state the relevant results, we introduce the following notation and assumption.

\medskip

\noindent{\bf Notation $\mathcal{C}^\alpha (B_1).$}
For $\alpha > 0$, we introduce a function space $\mathcal{C}^\alpha (B_1)$.
\begin{enumerate}[label = \rm(\roman*)]
\item If $\alpha$ is a positive integer, then $\mathcal{C}^\alpha (B_1)$ is the usual space $C^\alpha (B_1)$.

\item If $\alpha > [\alpha]$, then $\mathcal{C}^\alpha (B_1)$ is the set of all functions $f \in C^{[\alpha]} (B_1)$ such that
$$
| \nabla^{[\alpha]} f(x) - \nabla^{[\alpha]} f(y) | \leq c(|x - y|) |x - y|^{\alpha - [\alpha]} ~~~~~~ \textmd{for all} ~ x, y \in B_1,
$$
where $c(\cdot)$ is a nonnegative continuous function satisfying $c(0) = 0$.
\end{enumerate}

\noindent{\bf Hypothesis H.}
$K$ is a positive function in $\mathcal{C}^\alpha (B_1)$ with $\alpha = (n - 2)/2$. Also, if $n \geq 6$, then
$$
| \nabla^j K(x) | \leq c(|x|) | \nabla K(x) |^\frac{\alpha - j}{\alpha - 1} ~~~~~~ \textmd{for all} ~ 2 \leq j \leq [\alpha], x \in B_1,
$$
where $c(\cdot)$ is a nonnegative continuous function satisfying $c(0) = 0$.

\medskip

Chen and Lin in a series of papers \cite{CL97,CL99a,L00} studied the asymptotic behavior of solutions of \eqref{eq:csc} under the assumption that $0 < c_1 |x|^{\alpha - 1} \leq | \nabla K(x) | \leq c_2 |x|^{\alpha - 1}$ near the origin for some $\alpha > 0$. Furthermore, Taliaferro and Zhang \cite{TZ06} showed that if $K \in C^1 (B_1)$ and Hypothesis H holds, then every singular solution $u$ of \eqref{eq:csc} satisfies \eqref{asy=glo} for some radial solution $u_0$ of \eqref{eq:u0}. In this paper, we will establish a higher-order expansion for singular solutions of \eqref{eq:csc}.

Suppose that $u_0$ is a Fowler solution of \eqref{eq:u0}. Let
$$
u_0 (|x|) = |x|^{ - \frac{n - 2}{2} } \xi (- \ln |x|),
$$
where $\xi (t)$ is a positive periodic solution (still called Fowler solution) of
\begin{equation}\label{eq:xi}
- \xi'' + \frac{(n - 2)^2}{4} \xi = K(0) \xi^\frac{n + 2}{n - 2} ~~~~~~ \textmd{in} ~ \R.
\end{equation}
Set $t = - \ln |x|$, $\theta = x/|x| \in \Sp^{n - 1}$ and
$$
v(t, \theta) = e^{ - \frac{n - 2}{2} t } u(e^{- t} \theta) ~~~~~~ \textmd{for} ~ (t, \theta) \in \R_+ \times \Sp^{n - 1}.
$$
Then $v$ solves the equation
\begin{equation}\label{eq:v}
- v_{tt} - \Delta_\theta v + \frac{(n - 2)^2}{4} v = K(t, \theta) v^\frac{n + 2}{n - 2} ~~~~~~ \textmd{in} ~ \R_+ \times \Sp^{n - 1},
\end{equation}
where $K(t, \theta) := K(e^{- t} \theta) = K(x)$ and $\Delta_\theta$ is the Laplace-Beltrami operator on $\Sp^{n - 1}$. Consider the linearized operator of \eqref{eq:v} at a Fowler solution $\xi$
\begin{equation}\label{eq:yp-L}
L = - \frac{\partial^2}{\partial t^2} - \Delta_\theta + \frac{(n - 2)^2}{4} - \frac{n + 2}{n - 2} K(0) \xi^\frac{4}{n - 2},
\end{equation}
which has periodic coefficients and hence may be studied by classical Floquet theoretic method as in \cite{MPU}. The linear operator $L$ can be decoupled into infinitely many ordinary differential operators using the eigendata of $\Delta_\theta$. More precisely, let $\{ \lambda_i \}_{i \geq 0}$ be the sequence of eigenvalues (in the increasing order with multiplicity) of $- \Delta_\theta$ on $\Sp^{n - 1}$, and $\{ X_i \}_{i \geq 0}$ be a sequence of the corresponding normalized eigenfunctions of $- \Delta_\theta$ on $L^2 (\Sp^{n - 1})$, i.e., for each $i \geq 0$,
$$
- \Delta_\theta X_i = \lambda_i X_i ~~~~~~ \textmd{on} ~ \Sp^{n - 1}.
$$
Recall that $\lambda_0 = 0$, $\lambda_1 = \cdots = \lambda_n = n - 1$, $\lambda_{n + 1} = 2 n, \dots$, and each $X_i$ is a spherical harmonic of certain degree. Then for a fixed $i \geq 0$ and any $h \in C^2 (\R_+)$, we have
$$
L(h X_i) = (L_i h) X_i,
$$
where
\begin{equation}\label{eq:yp-Li}
L_i := - \frac{ {\rm d}^2 }{ {\rm d} t^2 } + \lambda_i + \frac{(n - 2)^2}{4} - \frac{n + 2}{n - 2} K(0) \xi^\frac{4}{n - 2}.
\end{equation}
Define {\it the index set} associated with a Fowler solution $\xi$ as
\begin{equation}\label{indset-01}
\aligned
\mathcal{I} [\xi] := &~ \bigg\{ \sum_{i \geq 1} n_i \rho_i > 0 : n_i \in \N ~ \textmd{with finitely many} ~ n_i > 0 \bigg\} \\
= &~ \bigg\{ \mu_i : 0 < \mu_1 < \mu_2 < \cdots < \mu_i < \mu_{i + 1} < \cdots,~ i \in \N_+ \bigg\}.
\endaligned
\end{equation}
Here $\{ \rho_i \}_{i \geq 1}$ is the sequence of positive constants defined as in Lemmas \ref{lem:Han21} and \ref{lem:Han22} in Section \ref{Preli}. Note that $\mu_1 = \rho_1 = 1$ and $\mu_2 = \min \{ 2, \rho_{n + 1} \}$.

\begin{theorem}\label{thm:csc}
Suppose that $K \in C^1 (B_1)$ is a positive function and $u \in C^2 (B_1 \setminus \{ 0 \})$ is a solution of \eqref{eq:csc} with a non-removable singularity.
\begin{enumerate}[label = \rm(\roman*)]
\item If $n \in \{ 3, 4 \}$ or $\nabla K(0) \neq 0$, then there exists a radial solution $u_0$ of \eqref{eq:u0} such that
\begin{equation}\label{eq:optimal}
u(x) = u_0 (|x|) ( 1 + O(- |x| \ln |x|) ) ~~~~~~ \textmd{as} ~ |x| \to 0^+.
\end{equation}

\item If $n \geq 5$, $\nabla K(0) = 0$, and $K$ satisfies Hypothesis H or $K$ satisfies
$$
0 < c_1 |x|^\frac{n - 4}{2} \leq | \nabla K(x) | \leq c_2 |x|^\frac{n - 4}{2} ~~~~~~ \textmd{for} ~ x \in B_1 \setminus \{ 0 \},
$$
then there exists a radial solution $u_0$ of \eqref{eq:u0} such that
\begin{equation}\label{24eq=poiy}
u(x) = u_0 (|x|) \bigg( 1 + \sum_{i = 1}^m \sum_{j = 0}^{i - 1} c_{ij} (x) |x|^{\mu_i} (- \ln |x|)^j + O \Big( |x|^\frac{n - 2}{2} (- \ln |x|)^m \Big) \bigg) ~~ \textmd{as} ~ |x| \to 0^+,
\end{equation}
where $m$ is the largest positive integer such that $1 = \mu_1 < \cdots < \mu_m < (n - 2)/2$, $\mu_1, \dots, \mu_m$ are the first $m$ elements in the index set $\mathcal{I} [e^{(2 - n) t/2} u_0 (e^{- t})]$, and $c_{ij}$ is a bounded smooth function on $B_{1/2} \setminus \{ 0 \}$ for each $i, j$ in the summation.
\end{enumerate}
\end{theorem}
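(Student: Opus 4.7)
The natural setting is the Emden--Fowler change of variables $t = - \ln |x|$, $\theta = x/|x|$, $v(t,\theta) = e^{-(n-2)t/2} u(e^{-t}\theta)$, which turns \eqref{eq:csc} into the cylinder equation \eqref{eq:v}. By the Taliaferro--Zhang result cited just before the theorem, under Hypothesis H (or under the two-sided gradient bound in part (ii)) every non-removable singular solution already satisfies $v(t,\theta) = \xi(t) + o(1)$ as $t \to +\infty$ for some Fowler solution $\xi$ of \eqref{eq:xi}; the corresponding $u_0(|x|) = |x|^{-(n-2)/2} \xi(-\ln|x|)$ is the candidate leading term. My plan is to bootstrap this $o(1)$ statement to a full polynomial-plus-log expansion by writing $v = \xi + w$ and iteratively solving a linearized equation for $w$, using the Floquet decomposition of the linearization.

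\textbf{Linearization and mode-by-mode inversion.} Substituting $v = \xi + w$ into \eqref{eq:v} gives the equation
\begin{equation*}
L w = \bigl( K(t,\theta) - K(0) \bigr) (\xi + w)^{(n+2)/(n-2)} + K(0) Q(\xi, w),
\end{equation*}
where $L$ is the linearized operator \eqref{eq:yp-L} and $Q(\xi, w) = (\xi+w)^{(n+2)/(n-2)} - \xi^{(n+2)/(n-2)} - \tfrac{n+2}{n-2} \xi^{4/(n-2)} w$ contains the quadratic-and-higher terms. Expanding $w = \sum_{i \geq 0} w_i(t) X_i(\theta)$ along the spherical harmonics decouples $L$ into the ODEs $L_i w_i = f_i$, with $L_i$ as in \eqref{eq:yp-Li}. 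The Floquet theory invoked in Lemmas \ref{lem:Han21}--\ref{lem:Han22} provides, for each $i$, a basis of Bloch solutions of $L_i$ with growth/decay exponents $\pm \rho_i$ (with logarithmic corrections exactly on the translation/dilation modes $i = 0, 1, \dots, n$), together with an explicit right inverse that maps forcings decaying like $e^{-\mu t}$ with $\mu \notin \{\rho_i\}$ to solutions of the same decay, and produces an extra factor $t$ on resonance.

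\textbf{Iterative construction of the expansion.} I would now build the expansion inductively over the ordered index set $\mathcal{I}[\xi] = \{\mu_i\}_{i\geq 1}$. Assume at step $m$ that
\begin{equation*}
w(t,\theta) = \sum_{i < m} \sum_{j=0}^{i-1} \tilde c_{ij}(t,\theta) e^{-\mu_i t} t^j + r_m(t,\theta),
\end{equation*}
with each $\tilde c_{ij}$ periodic in $t$ and smooth in $\theta$ and with $r_m = o(e^{-\mu_m t})$. Plugging this ansatz into the equation above, the key observation is that since $Q(\xi,w)$ is leading-order quadratic, products of previously constructed terms generate forcing at rates $e^{-(\mu_{i_1} + \mu_{i_2})t}$, which is exactly why $\mathcal{I}[\xi]$ is closed under sums of indicial roots $\rho_i$ and why the next relevant exponent $\mu_m$ lies in this set. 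Solving $L_i w_i = f_i$ mode by mode with the Floquet inverse produces the new coefficients $\tilde c_{mj}$; a logarithmic factor $t^j = (-\ln|x|)^j$ appears precisely when the forcing at exponent $\mu_m$ resonates with an $L_i$-Bloch solution, and the bound $j \leq m-1$ follows because each iteration can raise the log exponent by at most one. Undoing the change of variables turns $\tilde c_{ij}(t,\theta)$ into bounded smooth functions $c_{ij}(x)$ on $B_{1/2} \setminus \{0\}$. The iteration terminates at the largest $m$ with $\mu_m < (n-2)/2$, since beyond that threshold the $K$-perturbation $(K(t,\theta)-K(0))(\xi+w)^{(n+2)/(n-2)} = O(e^{-(n-2)t/2})$ dominates (by Hypothesis H or the gradient hypothesis), and the remainder gets absorbed into $O(|x|^{(n-2)/2}(-\ln|x|)^m)$.

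\textbf{Special cases and main obstacle.} Part (i) follows by running a single step: for $n \in \{3,4\}$ the flatness threshold $(n-2)/2 \leq 1 = \mu_1$ forces the first correction to be swallowed by the $K$-error of order $|x|$, and the single resonance at $\rho_1 = 1$ produces the logarithm in \eqref{eq:optimal}; similarly if $\nabla K(0) \neq 0$ the $K$-perturbation is already of order $|x|^1 = |x|^{\mu_1}$, so only the first step is needed. The main technical obstacle is the inversion of $L_i$ on the resonant modes $i = 0, 1, \dots, n$, whose kernel corresponds to the translation and Delaunay-parameter symmetries of the model equation \eqref{eq:u0}: to proceed to the next order I must choose the Fowler solution $\xi$, its phase, and the translation/scaling parameters so as to kill the projection of the forcing onto these bounded periodic solutions before applying the Floquet inverse. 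A second subtlety is propagating the regularity of the coefficients $c_{ij}$ and controlling the accumulation of logarithmic powers over the many iterative steps so as to still close the remainder estimate with the claimed $O(|x|^{(n-2)/2}(-\ln|x|)^m)$ bound.
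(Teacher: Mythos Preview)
Your proposal is correct and is essentially the paper's own route: the paper reduces Theorem~\ref{thm:csc} to Theorem~\ref{thm:csc-expans} by invoking \cite[Theorem~1.2]{CL99a} and \cite[Theorem~2]{TZ06} for the initial convergence $v\to\xi$, and Theorem~\ref{thm:csc-expans} is proved by exactly the linearize-and-bootstrap scheme you outline, with Lemma~\ref{lem:HanA8} playing the role of your mode-by-mode Floquet inverse. One correction to what you call the ``main obstacle'': you do \emph{not} re-choose $\xi$ or the translation parameters at each step---$\xi$ is fixed once and for all by the initial asymptotic, the decaying kernel solutions $e^{-\rho_i t}p_i^+(t)X_i(\theta)$ for $i\geq 1$ are not obstructions but rather enter the expansion with free constants determined a posteriori by $v$ (this is precisely the content of Lemma~\ref{lem:HanA8}(ii)), and the $i=0$ kernel is bounded or linearly growing and hence automatically absent under the standing hypothesis $v-\xi\to 0$.
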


Theorem \ref{thm:csc} improves the results in Chen-Lin \cite{CL97,CL99a} and Taliaferro-Zhang \cite{TZ06}, where the authors showed 
$$
u(x) = u_0 (|x|) ( 1 + O(|x|^\gamma) ) ~~~~~~ \textmd{as} ~ |x| \to 0^+
$$
for some $\gamma \in (0, 1)$. Moreover, when $n \geq 6$, we would give the first and second order expansion terms in \eqref{24eq=poiy} precisely (see Theorem \ref{thm:csc-expans-2} below). In dimension $3$ or $4$, we can also obtain a higher-order expansion by assuming that $K$ has better flatness near the origin.

\begin{remark}
The convergence in \eqref{eq:optimal} for $\nabla K(0) \neq 0$ should be optimal. In particular, for $n = 4$ we have the following example. Let
$$
u_0 (|x|) := |x|^{- 1}, ~~~~~~ u(x) := u_0 (x) ( 1 + (x_1 + \cdots + x_4) (1 - \ln |x|)/16 ),
$$
$$
K(x) :=
\begin{cases}
\frac{ 1 + 3 (x_1 + \cdots + x_4) (1 - \ln |x|)/16 + (x_1 + \cdots + x_4)/8 }{ ( 1 + (x_1 + \cdots + x_4) (1 - \ln |x|)/16 )^3 } ~~~ & \textmd{if} ~ x \in B_1 \setminus \{ 0 \}, \\
1 ~~~~~~ & \textmd{if} ~ x = 0.
\end{cases}
$$
Then $u_0$ and $u$ are $C^2$ positive solutions of \eqref{eq:u0} and \eqref{eq:csc} respectively. It is elementary to check that $K \in C^1 (B_1)$ and $\nabla K(0) = \big( \frac{1}{8}, \frac{1}{8}, \frac{1}{8}, \frac{1}{8} \big) \in \R^4$.
\end{remark}

\begin{theorem}\label{thm:csc-expans}
Let $K \in C^1 (B_1)$ be a positive function satisfying
$$
K(x) = K(0) + O(|x|^\beta) ~~~~~~ \textmd{for some} ~ \beta \geq 1 ~ \textmd{near the origin}.
$$
Suppose that $v$ is a positive solution of \eqref{eq:v} in $\R_+ \times \Sp^{n - 1}$ and $\xi$ is a positive periodic solution of \eqref{eq:xi} satisfying
\begin{equation}\label{eq:v2xi}
v(t, \theta) \to \xi (t) ~~~~~~ \textmd{as} ~ t \to \infty ~ \textmd{uniformly for} ~ \theta \in \Sp^{n - 1}.
\end{equation}
\begin{enumerate}[label = \rm(\roman*)]
\item If $\beta = 1$, then there exists a constant $C > 0$ such that for any $(t, \theta) \in (1, \infty) \times \Sp^{n - 1}$,
$$
| v(t, \theta) - \xi (t) | \leq C t e^{- t}.
$$

\item If $\beta > 1$, then there exists a spherical harmonic $Y$ of degree $1$ and a constant $C > 0$ such that for any $(t, \theta) \in (1, \infty) \times \Sp^{n - 1}$,
$$
\bigg| v(t, \theta) - \xi (t) - e^{- t} \bigg( \frac{n - 2}{2} \xi (t) - \xi' (t) \bigg) Y(\theta) \bigg| \leq C e^{- \gamma t},
$$
where $\gamma \in (1, 2)$ is a constant. Furthermore, there exists a constant $C > 0$ such that for any $(t, \theta) \in (1, \infty) \times \Sp^{n - 1}$,
\begin{equation}\label{eq:csc-expans}
\bigg| v(t, \theta) - \xi (t) - \sum_{i = 1}^m \sum_{j = 0}^{i - 1} c_{ij} (t, \theta) t^j e^{- \mu_i t} \bigg| \leq
\bigg\{
\aligned
& C e^{- \beta t} ~ & \textmd{when} & ~ \mu_m < \beta < \mu_{m + 1}, \\
& C t^m e^{- \beta t} ~ & \textmd{when} & ~ \beta = \mu_{m + 1},
\endaligned
\end{equation}
where $\{ \mu_m \}_{m \geq 1}$ is defined as in \eqref{indset-01} and $c_{ij}$ is a bounded smooth function on $(1, \infty) \times \Sp^{n - 1}$ for each $i, j$ in the summation.
\end{enumerate}
\end{theorem}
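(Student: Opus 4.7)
The plan is to reduce everything to a perturbed linear problem around the Fowler solution $\xi$. Setting $w := v - \xi$ and $p = (n+2)/(n-2)$, the equation \eqref{eq:v} becomes
\[
L w \,=\, \mathcal{N}(w) + \mathcal{E}(t,\theta),
\]
where $L$ is the linearized operator in \eqref{eq:yp-L}, the term $\mathcal{N}(w) := K(0)\bigl[(\xi+w)^p - \xi^p - p\,\xi^{p-1}w\bigr] + \bigl(K(t,\theta)-K(0)\bigr)(\xi+w)^p - (K(t,\theta)-K(0))\xi^p$ is quadratic in $w$ for $|w|$ small (plus a $K$-perturbation that is itself small in $w$), and the pure forcing term $\mathcal{E}(t,\theta) := (K(t,\theta)-K(0))\,\xi^p$ satisfies $|\mathcal{E}(t,\theta)| \le C e^{-\beta t}$ by the flatness assumption. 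I would then expand in spherical harmonics $w(t,\theta) = \sum_{i\ge 0} w_i(t)\,X_i(\theta)$, so that each mode solves an ODE of the form $L_i w_i = \mathcal{N}_i + \mathcal{E}_i$ with $L_i$ as in \eqref{eq:yp-Li}. By the Floquet analysis recorded in Lemmas~\ref{lem:Han21} and~\ref{lem:Han22}, $L_i$ has real characteristic exponents $\pm\rho_i$ with $\rho_0=0$, $\rho_1=\cdots=\rho_n=1$ (corresponding to the translation kernel $\tfrac{n-2}{2}\xi-\xi'$), and $\rho_i>1$ for $i\ge n+1$. The key analytic input is a standard inversion lemma: if a forcing decays like $e^{-\alpha t}$ and $\alpha$ is not an indicial root of $L_i$, then one obtains a particular solution decaying at the same rate; if $\alpha$ is an indicial root, one loses a factor of $t$.

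For part (i), $\beta=1$ makes $\mathcal{E}$ resonate with the first-order root $\rho_1=1$ on modes $1,\dots,n$, producing a resonant particular solution of size $Cte^{-t}$. A preliminary bootstrap using $v\to\xi$ and the quadratic structure of $\mathcal{N}$ first yields $|w|=O(e^{-\delta t})$ for some small $\delta>0$; iterating along the indicial ladder up to the first obstruction at $\rho_1=1$, together with matching against the decaying homogeneous solutions at infinity, then gives $|w(t,\theta)|\le Cte^{-t}$.

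For part (ii), since $\mathcal{E}$ decays strictly faster than $e^{-t}$, no resonance occurs at the first step: the leading part of $w$ must lie in the slowest-decaying homogeneous subspace of $L$, which is precisely $e^{-t}\bigl(\tfrac{n-2}{2}\xi-\xi'\bigr)Y(\theta)$ for some degree-one spherical harmonic $Y$, and the next indicial root $\mu_2=\min\{2,\rho_{n+1}\}>1$ controls the remainder, yielding the claimed $e^{-\gamma t}$ bound for any $\gamma\in(1,\mu_2)$. To upgrade this to the full expansion \eqref{eq:csc-expans}, I would induct on $m$: substituting the partial expansion into $Lw=\mathcal{N}(w)+\mathcal{E}$, the quadratic piece $\mathcal{N}(w)$ generates new forcing terms at all frequencies $\mu_i+\mu_j$, and by the very definition \eqref{indset-01} each such frequency lies in $\mathcal{I}[\xi]$; applying the inversion lemma mode-by-mode against these new frequencies produces the next coefficient $c_{m+1,j}(t,\theta)$, with the polynomial factor $t^j$ appearing only when the forcing frequency coincides with an indicial exponent of the relevant $L_i$. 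The forcing from $\mathcal{E}$, being of frequency $\beta$, is absorbed in the error on the right-hand side of \eqref{eq:csc-expans} and contributes the case distinction in that formula.

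The principal obstacle is the bookkeeping of the polynomial powers $t^j$: cumulative resonances during the induction could a priori yield powers of $t$ exceeding the bound $j\le i-1$ claimed in \eqref{eq:csc-expans}. The resolution is to exploit the upper-triangular structure of the iteration --- the forcing at level $\mu_k$ depends only on the coefficients at strictly earlier levels $\mu_\ell$, $\ell<k$ --- together with a careful check, modeled on the arguments of Han--Li--Li \cite{HLL}, that each resonant inversion contributes at most one additional $t$-factor, so that the polynomial degree at level $\mu_i$ is at most $i-1$. A companion technical point is the uniform boundedness and smoothness of the coefficients $c_{ij}(t,\theta)$ on $(1,\infty)\times\Sp^{n-1}$; this follows from the smoothness of the nonlinearity, the periodicity of $\xi$, and the explicit periodic Green's function built from the Floquet solutions of $L_i$.
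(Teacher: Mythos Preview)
Your proposal is essentially the same approach as the paper's: linearize at $\xi$, write $L\varphi = F(\varphi) + O(e^{-\beta t})$, and iterate using the Floquet/indicial structure of the $L_i$ together with the decay lemma (Lemma~\ref{lem:HanA8}), peeling off terms level by level in the index set $\mathcal{I}[\xi]$ and tracking the extra $t$-factors produced by resonances exactly as in Han--Li--Li \cite{HLL}. The paper's organization is by explicit case analysis on where $\beta$ sits relative to the $\rho_i$ and the ``secondary'' exponents $\widetilde{\rho}_i = \sum n_j\rho_j$, first under the simplifying assumption $\mathcal{I}_1\cap\mathcal{I}_2=\varnothing$ and then in general, but the mechanism is identical to your inductive scheme.

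One point to tighten: the very first step, getting $|w| = O(e^{-\delta t})$ from the bare hypothesis $w\to 0$, is not a pure bootstrap from the quadratic structure of $\mathcal{N}$---the $i=0$ mode of $L$ has bounded (not decaying) kernel elements, so you cannot directly feed $w=o(1)$ into the linear estimate and extract exponential decay. The paper imports this initial decay from \cite[Theorem~1]{TZ06}, which already gives $\varphi = O(e^{-\gamma t})$ for some $\gamma\in(0,1)$; you should invoke the same result (or an equivalent one) at that step rather than asserting it follows from iteration alone.
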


When $n \geq 6$, we could show that $\mu_2 = 2$ (see Corollary \ref{App-cor} in Appendix \ref{AppendixA}). In this case, we establish the following refined second-order expansion.

\begin{theorem}\label{thm:csc-expans-2}
Assume $n \geq 6$. Let $K \in C^1 (B_1)$ be a positive function satisfying
$$
K(x) = K(0) + O(|x|^\beta) ~~~~~~ \textmd{for some} ~ \beta > 2 ~ \textmd{near the origin}.
$$
Suppose that $v$ is a positive solution of \eqref{eq:v} in $\R_+ \times \Sp^{n - 1}$ and $\xi$ is a positive periodic solution of \eqref{eq:xi} satisfying \eqref{eq:v2xi}. Then there exists a spherical harmonic $Y = Y(\theta)$ of degree $1$ and a constant $C > 0$ such that for any $(t, \theta) \in (1, \infty) \times \Sp^{n - 1}$,
$$
| v(t, \theta) - \xi (t) - \xi_1 (t, \theta) - \xi_2 (t, \theta) | \leq C e^{- \gamma t},
$$
where
$$
\xi_1 (t, \theta) := e^{- t} \bigg( \frac{n - 2}{2} \xi (t) - \xi' (t) \bigg) Y(\theta),
$$
$$
\xi_2 (t, \theta) := e^{- 2 t} \bigg[ - \frac{1}{2} K(0) \xi (t)^\frac{n + 2}{n - 2} Y(\theta)^2 + \bigg( - \frac{n - 2}{8} \xi (t) + \frac{1}{4} \xi' (t) \bigg) \Delta_\theta (Y^2) \bigg],
$$
and $\gamma \in (2, 3)$ is a constant.
\end{theorem}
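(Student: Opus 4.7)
The plan is to bootstrap Theorem \ref{thm:csc-expans}(ii) by one further order. Since $\beta > 2 > 1$, that theorem already produces a degree-one spherical harmonic $Y$ and an exponent $\gamma_0 \in (1,2)$ such that $|v - \xi - \xi_1| \leq C e^{-\gamma_0 t}$ on $(1,\infty) \times \Sp^{n-1}$, with $\xi_1 = e^{-t}(\tfrac{n-2}{2}\xi - \xi') Y$. Setting $\phi := v - \xi - \xi_1 - \xi_2$, I will show that the explicit $\xi_2$ in the statement is precisely the correction that cancels the leading $e^{-2t}$ source in the equation for $\phi$, and then invoke a linear estimate for the operator $L$ of \eqref{eq:yp-L} to conclude $|\phi| \leq C e^{-\gamma t}$ for some $\gamma \in (2,3)$.

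Subtracting \eqref{eq:xi} from \eqref{eq:v} and Taylor-expanding $K(t,\theta) v^p$, with $p := \tfrac{n+2}{n-2}$, to second order around $\xi$, together with the relation $L\xi_1 = 0$ (verified by direct calculation on the degree-one eigenspace of $-\Delta_\theta$ from $L_0 \xi' = 0$, $L_0(\tfrac{n-2}{2}\xi - \xi') = -2 K(0)\xi^p$, and the reorganization $L_1(e^{-t}\psi) = e^{-t}[L_0\psi + 2\psi' + (n-2)\psi]$), I arrive at
$$
L\phi \;=\; \frac{p(p-1)}{2}\,K(0)\,\xi^{p-2}\,\xi_1^2 \;-\; L\xi_2 \;+\; R,
$$
where $R$ collects the cubic term $O(\xi^{p-3}\xi_1^3) = O(e^{-3t})$, the cross terms $O(\xi^{p-2}\xi_1 \xi_2) = O(e^{-3t})$ and $O(\xi^{p-2}\xi_1 \phi)$, higher-order pieces in $\xi_2$ and $\phi$, and the flatness error $(K - K(0))v^p = O(e^{-\beta t})$. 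Starting from the initial bound $|\phi| = O(e^{-\gamma_0 t})$ and iterating, every summand in $R$ becomes $O(e^{-\gamma t})$ for some $\gamma \in (2, \min(3, \beta))$.

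The algebraic core is then the identity $L\xi_2 = \tfrac{p(p-1)}{2}\,K(0)\,\xi^{p-2}\,\xi_1^2 + O(e^{-\gamma t})$. I verify it by the Clebsch--Gordan decomposition $Y^2 = c_0 + Z$ with $Z$ a degree-two spherical harmonic ($-\Delta_\theta Z = 2 n Z$), which splits both $\xi_1^2$ and $\xi_2$ into their level-zero and level-two components. On level zero, both sides reduce to multiples of $\xi^{p-2}(\tfrac{n-2}{2}\xi - \xi')^2 e^{-2t}$, and matching them is a one-variable identity coming from $L_0\xi = -(p-1)K(0)\xi^p$ together with the Fowler first integral $(\xi')^2 = \tfrac{(n-2)^2}{4}\xi^2 - \tfrac{2 K(0)}{p+1}\xi^{p+1} + 2E$. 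On level two, the extra spectral contribution $2 n Z$ from $-\Delta_\theta Z$ combines with the $\xi$ and $\xi'$ coefficients of $\xi_2$ to reproduce the source, which is the structural reason that $\xi_2$ contains both a $\xi^p Y^2$ piece and a $(\xi,\xi')\Delta_\theta(Y^2)$ piece with precisely the stated coefficients. Once this identity is in hand, the bound $|\phi| \leq C e^{-\gamma t}$ follows from the same Floquet/Fredholm analysis of $L$ used in the proof of Theorem \ref{thm:csc-expans}: decomposing $\phi = \sum_i \phi_i(t) X_i(\theta)$ reduces $L\phi = O(e^{-\gamma t})$ to the one-variable estimates $L_i \phi_i = F_i$, and by Corollary \ref{App-cor} one can choose $\gamma \in (2,3)$ avoiding every indicial root of every $L_i$, placing the forcing in the non-resonant Fredholm regime. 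The main technical obstacle will be this level-zero/level-two bookkeeping in the verification of the algebraic identity: the decomposition of the squared correction into two spherical eigenspaces must be carried out carefully, and one must check that the two pieces of the stated $\xi_2$ jointly match the source with the precise coefficients given, without producing spurious lower-order terms that would ruin the non-resonant linear estimate.
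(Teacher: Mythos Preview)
Your proposal is correct and follows the same overall strategy as the paper: linearize, identify the quadratic source at order $e^{-2t}$, cancel it with the prescribed $\xi_2$, and apply Lemma~\ref{lem:HanA8} in a range where no indicial root interferes (the relevant fact being $\rho_{n+1}>2$ from Lemma~\ref{lem:rhoi2} rather than Corollary~\ref{App-cor}, though for $n\ge 6$ these amount to the same thing). The one substantive difference is in how the key identity $L\xi_2 = \tfrac{p(p-1)}{2}K(0)\xi^{p-2}\xi_1^2$ is established. You propose a direct computation: decompose $Y^2$ into its degree-$0$ and degree-$2$ spherical components and match each side level by level, invoking the Fowler first integral to close the level-zero identity. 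The paper instead observes that the translated Fowler solution $\xi_a(t,\theta)=|\theta-e^{-t}a|^{-(n-2)/2}\,\xi\bigl(t+\ln|\theta-e^{-t}a|\bigr)$ is an \emph{exact} solution of \eqref{eq:v} with $K\equiv K(0)$, expands it to second order as $\xi_a=\xi+\xi_1^{(a)}+\xi_2^{(a)}+O(e^{-3t})$ with $Y=\langle a,\theta\rangle$, and reads off $\xi_2$ from the second-order term of that expansion. Because $\xi_a$ solves the nonlinear equation exactly, collecting the $e^{-2t}$ terms forces $L\xi_2^{(a)}=\tfrac{p(p-1)}{2}K(0)\xi^{p-2}(\xi_1^{(a)})^2$ automatically, with no eigenspace bookkeeping and no first-integral manipulation. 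Your route is workable but more laborious; the paper's route both explains where the precise coefficients in $\xi_2$ come from and bypasses the level-zero/level-two matching you flag as the main technical obstacle. A minor procedural point: the paper starts from the $O(e^{-2t})$ bound on $v-\xi-\xi_1$ obtained inside the proof of Theorem~\ref{thm:csc-expans}, so no bootstrap on $\phi$ is needed, whereas you begin from the weaker $O(e^{-\gamma_0 t})$ with $\gamma_0\in(1,2)$ and iterate once.
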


Now we consider the existence of solutions of \eqref{eq:v} which satisfy the asymptotic expansion \eqref{eq:csc-expans}. For a nonnegative function $f \in C^2 (\R_+ \times \Sp^{n - 1})$, define
\begin{equation}\label{eq:csc-M}
\mathcal{M} (f) := - f_{tt} - \Delta_\theta f + \frac{(n - 2)^2}{4} f - K f^\frac{n + 2}{n - 2}.
\end{equation}

\begin{theorem}\label{thm:csc-existv1}
Let $\xi$ be a positive periodic solution of \eqref{eq:xi} with the index set $\mathcal{I} [\xi]$, and let $K \in C^1 (B_1)$ be a positive function satisfying
$$
| \nabla K(x) | = O(|x|^{\beta - 1}) ~~~~~~ \textmd{for some} ~ \beta > 1 ~ \textmd{and} ~ \beta \notin \mathcal{I} [\xi] ~ \textmd{near the origin}.
$$
Suppose that $\widehat{v} \in C^{2, \alpha} (\R_+ \times \Sp^{n - 1})$ satisfies
\begin{equation}\label{eq:hatv-xi}
| (\widehat{v} - \xi) (t, \theta) | + | \nabla (\widehat{v} - \xi) (t, \theta) | \to 0 ~~~~~~ \textmd{as} ~ t \to \infty ~ \textmd{uniformly for} ~ \theta \in \Sp^{n - 1},
\end{equation}
and, for any $(t, \theta) \in (1, \infty) \times \Sp^{n - 1}$,
\begin{equation}\label{eq:Mhatv}
| \mathcal{M} (\widehat{v}) (t, \theta) | + | \nabla ( \mathcal{M} (\widehat{v}) ) (t, \theta) | \leq C e^{- \beta t}
\end{equation}
for some positive constant $C$. Then there exist constants $t_0 > 1, C > 0$ and a positive solution $v \in C^{2, \alpha} ([t_0, \infty) \times \Sp^{n - 1})$ of \eqref{eq:v} such that for any $(t, \theta) \in (t_0, \infty) \times \Sp^{n - 1}$,
$$
| (v - \widehat{v}) (t, \theta) | \leq C e^{- \beta t}.
$$
\end{theorem}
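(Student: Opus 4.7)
The plan is to recast the problem as a fixed-point equation for the correction $w := v - \widehat{v}$ in a weighted Hölder space of functions decaying like $e^{-\beta t}$, and to apply the Banach contraction principle. Expanding the nonlinear operator $\mathcal{M}$ from \eqref{eq:csc-M} in $w$ yields
\[
\mathcal{M}(\widehat{v} + w) = \mathcal{M}(\widehat{v}) + L w + P(t,\theta)\,w + Q(w),
\]
where $L$ is the model linearized operator at the Fowler solution $\xi$ from \eqref{eq:yp-L},
\[
P(t,\theta) := \tfrac{n+2}{n-2}\bigl[K(0)\xi(t)^{\frac{4}{n-2}} - K(t,\theta)\widehat{v}(t,\theta)^{\frac{4}{n-2}}\bigr],
\]
which tends to zero as $t \to \infty$ by \eqref{eq:hatv-xi} together with $|K(t,\theta) - K(0)| = O(e^{-\beta t})$ (obtained by integrating $|\nabla K(x)| = O(|x|^{\beta-1})$ from the origin), and $Q(w) = O(w^2)$ is the quadratic remainder in $w$. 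Solving $\mathcal{M}(\widehat{v} + w) = 0$ is therefore equivalent to
\[
L w \;=\; -\,\mathcal{M}(\widehat{v}) - P w - Q(w).
\]

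The central analytic step is the construction of a bounded right inverse for $L$ on weighted Hölder spaces. With $Q_t := [t, t+1] \times \Sp^{n-1}$ and $\|f\|_{C^{k,\alpha}_\beta} := \sup_{t \geq t_0} e^{\beta t}\|f\|_{C^{k,\alpha}(Q_t)}$, I would invert $L$ via the spherical harmonic decomposition $g = \sum_{i \geq 0} g_i(t) X_i(\theta)$, which reduces $Lw = g$ to the sequence of periodic-coefficient ODEs $L_i h_i = g_i$ on $[t_0, \infty)$ from \eqref{eq:yp-Li}. Floquet theory furnishes two linearly independent solutions of the homogeneous problem $L_i h = 0$ with precise (possibly polynomially corrected) exponential rates $\pm \rho_i^{(\cdot)}$, and the positive rates are exactly the elements of the index set $\mathcal{I}[\xi]$ in \eqref{indset-01}. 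The non-resonance assumption $\beta \notin \mathcal{I}[\xi]$ then guarantees that variation of constants, selecting the branch that decays at $+\infty$, produces a mode-by-mode bound $\|h_i\|_\beta \leq C \|g_i\|_\beta$ with $C$ uniform in $i$ (for large $i$, $\sqrt{\lambda_i}$ dominates $\beta$ and the Green's function estimate improves). Reassembling the modes and applying interior Schauder estimates yields a bounded inverse $T : C^{0,\alpha}_\beta \to C^{2,\alpha}_\beta$.

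With $T$ in hand, define
\[
F(w) := T\bigl(-\mathcal{M}(\widehat{v}) - P w - Q(w)\bigr)
\]
on the ball $B_r := \{w : \|w\|_{C^{2,\alpha}_\beta} \leq r\}$. Hypothesis \eqref{eq:Mhatv} gives $\|\mathcal{M}(\widehat{v})\|_{C^{0,\alpha}_\beta} \leq C_0$, so choose $r := 2 C_0 \|T\|$. Self-mapping of $B_r$ follows by combining this with $\|Pw\|_{C^{0,\alpha}_\beta} \leq \sup_{t \geq t_0}|P| \cdot r = o(1)\,r$ as $t_0 \to \infty$, and $\|Q(w)\|_{C^{0,\alpha}_\beta} \leq C r^2 e^{-\beta t_0}$. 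A parallel estimate, using the mean value theorem for the quadratic $Q$ and bilinearity-in-the-weight for $P$, shows $F$ is a contraction with ratio $\leq 1/2$ on $B_r$ for $t_0$ sufficiently large. Banach's theorem then produces a unique fixed point $w \in B_r$, and $v := \widehat{v} + w$ solves \eqref{eq:v} with $|v - \widehat{v}| \leq C e^{-\beta t}$ on $[t_0,\infty) \times \Sp^{n-1}$; positivity is automatic since $\widehat{v} \to \xi > 0$ uniformly and $\|w\|_\infty = O(e^{-\beta t_0})$ for $t_0$ large.

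The main obstacle will be the uniform bounded invertibility of $L$ across all spherical modes in the weighted norm. Mode-by-mode Floquet analysis is classical for each fixed $i$, but careful bookkeeping of constants under the $L^2(\Sp^{n-1})$ decomposition, together with the asymptotics of the characteristic exponents of $L_i$ as $\lambda_i \to \infty$, is required to ensure the reassembled operator is bounded on the full cylinder. An alternative would be to analyse the conjugated operator $e^{\beta t}\, L\, e^{-\beta t}$ directly: the non-resonance $\beta \notin \mathcal{I}[\xi]$ renders it Fredholm of index zero with trivial kernel on the natural function space, and hence boundedly invertible.
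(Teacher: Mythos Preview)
Your proposal is correct and follows essentially the same route as the paper: set up a fixed-point problem for $w=v-\widehat v$ in the weighted space $C^{2,\alpha}_\beta$, invert the linearized operator $L$ at the Fowler solution $\xi$, and close with the Banach contraction principle for $t_0$ large. The paper simply quotes the bounded right inverse $L^{-1}:C^{0,\alpha}_\beta\to C^{2,\alpha}_\beta$ from \cite[Theorem 2.9 and Remark 2.10]{HL} rather than reproving it, and organizes the nonlinear remainder slightly differently (it writes $P(\varphi)=K(\widehat v+\varphi)^{\frac{n+2}{n-2}}-K\widehat v^{\frac{n+2}{n-2}}-\tfrac{n+2}{n-2}\xi^{\frac{4}{n-2}}\varphi$ and controls it via the integral form $Q(\varphi)=\tfrac{n+2}{n-2}\int_0^1[(\widehat v+s\varphi)^{4/(n-2)}-\xi^{4/(n-2)}]\,ds$), but this is bookkeeping, not a different idea.

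One small inaccuracy worth correcting: the positive Floquet exponents of the operators $L_i$ are the numbers $\{\rho_i\}_{i\ge 1}$ of Lemmas~\ref{lem:Han21}--\ref{lem:Han22}, \emph{not} the full index set $\mathcal I[\xi]$ (which consists of all finite positive integer combinations of the $\rho_i$). What the inversion of $L$ on $C^{0,\alpha}_\beta$ actually requires is $\beta\notin\{\rho_i\}_{i\ge 1}$; the hypothesis $\beta\notin\mathcal I[\xi]$ is strictly stronger and hence suffices, so your argument goes through unchanged.
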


\begin{remark}
The choice of function $\widehat{v}$ in Theorem \ref{thm:csc-existv1} is flexible. If $\widehat{v}$ satisfies \eqref{eq:hatv-xi}, the condition \eqref{eq:Mhatv} is equivalent to
$$
| \mathcal{M}_1 (\widehat{v}) (t, \theta) | + | \nabla ( \mathcal{M}_1 (\widehat{v}) ) (t, \theta) | \leq C e^{- \beta t},
$$
where
$$
\mathcal{M}_1 (\widehat{v}) := - \widehat{v}_{tt} - \Delta_\theta \widehat{v} + \frac{(n - 2)^2}{4} \widehat{v} - K(0) \widehat{v}^\frac{n + 2}{n - 2}.
$$
Han and Li \cite{HL} provided a general procedure to construct such approximate solution $\widehat{v}$ (see Proposition 3.3 there).
\end{remark}

When taking $\widehat{v} = \xi$ in Theorem \ref{thm:csc-existv1}, we have the following result which holds for all $\beta > 1$.

\begin{theorem}\label{thm:csc-existv2}
Let $K \in C^1 (B_1)$ be a positive function satisfying
$$
| \nabla K(x) | = O(|x|^{\beta - 1}) ~~~~~~ \textmd{for some} ~ \beta > 1 ~ \textmd{near the origin}.
$$
Suppose that $\xi$ is a positive periodic solution of \eqref{eq:xi}. Then there exist two constants $t_0 > 1, C > 0$ and a positive solution $v \in C^{2, \alpha} ([t_0, \infty) \times \Sp^{n - 1})$ of \eqref{eq:v} such that for any $(t, \theta) \in (t_0, \infty) \times \Sp^{n - 1}$,
$$
| v(t, \theta) - \xi (t) | \leq
\bigg\{
\aligned
& C e^{- \beta t} ~~~~~~ & \textmd{if} ~ \beta \notin \{ \rho_i \}_{i \geq 1}, \\
& C t e^{- \beta t} ~~~~~~ & \textmd{if} ~ \beta \in \{ \rho_i \}_{i \geq 1},
\endaligned
$$
where $\{ \rho_i \}_{i \geq 1}$ is the sequence of positive constants defined as in Lemmas \ref{lem:Han21} and \ref{lem:Han22}.
\end{theorem}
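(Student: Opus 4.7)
The plan is to reduce Theorem~\ref{thm:csc-existv2} to an application of the fixed-point scheme underlying Theorem~\ref{thm:csc-existv1} with the approximate solution chosen to be $\widehat{v}=\xi$, and to invert the linearized operator $L$ from~\eqref{eq:yp-L} mode by mode so that \emph{all} $\beta>1$ can be covered. Writing $v=\xi+w$, the equation $\mathcal{M}(v)=0$ becomes
$$
Lw = -\mathcal{M}(\xi) - R(t,\theta,w),
$$
where $L$ is the linearization of $\mathcal{M}$ at $\xi$ with $K$ replaced by $K(0)$, and $R$ collects both the Taylor remainder of order $w^2$ and the correction $\tfrac{n+2}{n-2}(K(t,\theta)-K(0))\,\xi^{4/(n-2)}w$ produced by that replacement. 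The hypothesis $|\nabla K(x)|=O(|x|^{\beta-1})$ together with $K\in C^1$ yields $|K(e^{-t}\theta)-K(0)|\le Ce^{-\beta t}$ and analogous bounds for the $t$- and $\theta$-derivatives of $K(t,\theta)$, so that $|\mathcal{M}(\xi)|+|\nabla \mathcal{M}(\xi)|\le C e^{-\beta t}$ on $(1,\infty)\times\Sp^{n-1}$; in particular the assumption~\eqref{eq:Mhatv} of Theorem~\ref{thm:csc-existv1} is satisfied when $\beta\notin\mathcal{I}[\xi]$.

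Next I would construct a right inverse $G$ of $L$ in a weighted Hölder space tailored to the decay rate. Since $\xi$ depends only on $t$, $L$ commutes with $\Delta_\theta$, so expanding in the spherical harmonics $X_i$ decouples $L$ into the one-dimensional ODE operators $L_i$ of~\eqref{eq:yp-Li}. Classical Floquet theory for these periodic ODEs, as developed in \cite{MPU} and used in \cite{HL,HLL}, furnishes a right inverse $G_i$ obeying
$$
|G_i f_i(t)| \le
\begin{cases} C_i\, e^{-\beta t}, & \beta\ne\rho_i,\\ C_i\, t\,e^{-\beta t}, & \beta=\rho_i, \end{cases}
$$
whenever $|f_i(t)|\le e^{-\beta t}$, with constants $C_i$ growing only polynomially in $i$ because $\rho_i\sim\sqrt{\lambda_i}$ for large $i$. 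Summing these mode-by-mode inversions and absorbing the high-frequency tail via interior Schauder estimates produces a bounded operator $G$ between weighted $C^\alpha$ and $C^{2,\alpha}$ spaces, with weight $e^{-\beta t}$ when $\beta\notin\{\rho_i\}_{i\ge 1}$ and $t\,e^{-\beta t}$ otherwise.

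With $G$ at hand, I would run a Banach contraction for $\Phi(w):=G\bigl(-\mathcal{M}(\xi)-R(\cdot,\cdot,w)\bigr)$ on a small ball in the corresponding weighted space over $[t_0,\infty)\times\Sp^{n-1}$. Because $\xi$ is bounded away from $0$ and $R$ vanishes to second order in $w$, the weighted norm of $R(\cdot,\cdot,w)$ is controlled by $C(e^{-\beta t_0}+\|w\|)\|w\|$, which together with the uniform bound on $\mathcal{M}(\xi)$ makes $\Phi$ a contraction once $t_0$ is chosen sufficiently large. The resulting fixed point $w$ yields $v=\xi+w$ with the asserted pointwise bound; positivity of $v$ on $[t_0,\infty)\times\Sp^{n-1}$ is automatic once $t_0$ is enlarged so that $|w|<\tfrac12\inf\xi$, and $v\in C^{2,\alpha}$ follows from standard elliptic regularity.

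I expect the main obstacle to be the construction of the right inverse $G$ in the resonant case $\beta\in\{\rho_i\}_{i\ge 1}$, where the bounded inversion of the corresponding $L_{i_0}$ must be replaced by a variation-of-parameters formula against the two periodic solutions supplied by Floquet theory, producing the unavoidable polynomial factor $t\,e^{-\beta t}$. Verifying that this loss occurs only for the finitely many modes with $\rho_i=\beta$, so that the mode-by-mode bounds still assemble into a single weighted bound, and that the resulting polynomial weight still admits a contraction against the quadratic remainder $R$, is the delicate technical step of the argument — but once it is in place, the same fixed-point procedure gives the desired solution for every $\beta>1$, not merely for $\beta\notin\mathcal{I}[\xi]$ as in Theorem~\ref{thm:csc-existv1}.
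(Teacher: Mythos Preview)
Your treatment of the non-resonant case $\beta\notin\{\rho_i\}_{i\ge1}$ is essentially the same as the paper's: both take $\widehat v=\xi$ in the fixed-point scheme of Theorem~\ref{thm:csc-existv1} and run a contraction in $C^{2,\alpha}_\beta$.

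The difference lies in the resonant case $\beta\in\{\rho_i\}_{i\ge1}$. You propose to build a right inverse of $L$ directly in a polynomial-weighted space with weight $t\,e^{-\beta t}$ and run the contraction there; as you correctly anticipate, this forces you to track the extra factor of $t$ through the finitely many resonant modes and to verify that the quadratic remainder still contracts against this degraded weight. The paper bypasses this completely: it chooses $\beta'\in(\beta/2,\beta)$ with $\beta'\notin\{\rho_i\}$ and $[\beta',\beta)\cap\{\rho_i\}=\varnothing$, applies the already-established non-resonant case at weight $e^{-\beta' t}$ to obtain a solution $\varphi\in C^{2,\alpha}_{\beta'}$, and then observes that
\[
L\varphi=F(\varphi)+(K-1)(\xi+\varphi)^{\frac{n+2}{n-2}}=O(e^{-2\beta' t})+O(e^{-\beta t})=O(e^{-\beta t}),
\]
whereupon Lemma~\ref{lem:HanA8} immediately upgrades the estimate to $|\varphi|\le C\,t\,e^{-\beta t}$. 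This two-step trick (solve at a nearby non-resonant weight, then bootstrap via the linear decay lemma) avoids constructing any inverse in polynomial-weighted spaces and makes the ``delicate technical step'' you flag unnecessary. Your route is not wrong, but the paper's is shorter and reuses the non-resonant machinery already in hand.
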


In $x$-coordinates, the above result can be stated as
\begin{corollary}\label{thm:csc-exist}
Let $K \in C^1 (B_1)$ be a positive function satisfying
$$
| \nabla K(x) | = O(|x|^{\beta - 1}) ~~~~~~ \textmd{for some} ~ \beta > 1 ~ \textmd{near the origin}.
$$
Let $\xi$ be a positive periodic solution of \eqref{eq:xi}. Then there exist two constants $0 < R < 1, C > 0$ and a positive solution $u$ of \eqref{eq:csc} in $B_R \setminus \{ 0 \}$ such that for any $x \in B_R \setminus \{ 0 \}$,
$$
\bigg| u(x) - |x|^{ - \frac{n - 2}{2} } \xi (- \ln |x|) \bigg| \leq
\bigg\{
\aligned
& C |x|^{\beta - (n - 2)/2} ~~~~~~ & \textmd{if} ~ \beta \notin \{ \rho_i \}_{i \geq 1}, \\
& C |x|^{\beta - (n - 2)/2} (- \ln |x|) ~~~~~~ & \textmd{if} ~ \beta \in \{ \rho_i \}_{i \geq 1},
\endaligned
$$
where $\{ \rho_i \}_{i \geq 1}$ is the sequence of positive constants defined as in Lemmas \ref{lem:Han21} and \ref{lem:Han22}.
\end{corollary}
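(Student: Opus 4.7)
The plan is to obtain Corollary \ref{thm:csc-exist} as a direct transcription of Theorem \ref{thm:csc-existv2} through the Emden--Fowler change of variables that was already introduced when passing from \eqref{eq:csc} to \eqref{eq:v}. Since Theorem \ref{thm:csc-existv2} is available, the entire argument is bookkeeping, and there is no real obstacle beyond carefully tracking the factor $|x|^{-(n-2)/2}$ and the relation $t = -\ln|x|$.

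First I would apply Theorem \ref{thm:csc-existv2} to the given $\xi$ and $K$, which yields $t_0 > 1$, $C_0 > 0$ and a positive solution $v \in C^{2,\alpha}([t_0,\infty) \times \mathbb{S}^{n-1})$ of \eqref{eq:v} satisfying
\[
|v(t,\theta) - \xi(t)| \leq
\begin{cases}
C_0 e^{-\beta t} & \text{if } \beta \notin \{\rho_i\}_{i\geq 1}, \\
C_0 t e^{-\beta t} & \text{if } \beta \in \{\rho_i\}_{i\geq 1},
\end{cases}
\]
for all $(t,\theta) \in [t_0,\infty) \times \mathbb{S}^{n-1}$. Set $R := e^{-t_0} \in (0,1)$ and define
\[
u(x) := |x|^{-\frac{n-2}{2}} v(-\ln|x|, x/|x|), \qquad x \in B_R \setminus \{0\}.
\]

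Next I would verify that $u$ is a positive classical solution of \eqref{eq:csc} on $B_R \setminus \{0\}$. This is exactly the inverse of the substitution $v(t,\theta) = e^{-(n-2)t/2} u(e^{-t}\theta)$ used to derive \eqref{eq:v} from \eqref{eq:csc}: a direct computation (applying the chain rule to $t = -\ln|x|$ and expressing the Euclidean Laplacian in polar coordinates as $\Delta = \partial_r^2 + \frac{n-1}{r}\partial_r + r^{-2}\Delta_\theta$) converts $-\Delta u - K u^{(n+2)/(n-2)} = 0$ into $-v_{tt} - \Delta_\theta v + \frac{(n-2)^2}{4} v - K(t,\theta) v^{(n+2)/(n-2)} = 0$, and the step is reversible. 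Positivity and $C^{2,\alpha}$ regularity of $u$ on $B_R \setminus \{0\}$ follow from the corresponding properties of $v$ together with smoothness of the map $x \mapsto (-\ln|x|, x/|x|)$ away from the origin.

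Finally I would transfer the pointwise estimate. For any $x \in B_R \setminus \{0\}$, setting $t = -\ln|x|$ and $\theta = x/|x|$ gives
\[
u(x) - |x|^{-\frac{n-2}{2}} \xi(-\ln|x|) = |x|^{-\frac{n-2}{2}} \bigl( v(t,\theta) - \xi(t) \bigr),
\]
so the bound from Theorem \ref{thm:csc-existv2}, combined with $e^{-\beta t} = |x|^\beta$ and $t = -\ln|x|$, immediately yields
\[
\Bigl| u(x) - |x|^{-\frac{n-2}{2}} \xi(-\ln|x|) \Bigr| \leq
\begin{cases}
C |x|^{\beta - (n-2)/2} & \text{if } \beta \notin \{\rho_i\}_{i\geq 1}, \\
C |x|^{\beta - (n-2)/2}(-\ln|x|) & \text{if } \beta \in \{\rho_i\}_{i\geq 1},
\end{cases}
\]
with $C = C_0$. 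The main (and only) ``subtle'' point to double-check is that the hypothesis $|\nabla K(x)| = O(|x|^{\beta-1})$ is invariant under the change of variables in the sense that it yields the integrated bound on $K(t,\theta) - K(0)$ needed to invoke Theorem \ref{thm:csc-existv2}; this is immediate by the fundamental theorem of calculus along the radial segment from $0$ to $x$.
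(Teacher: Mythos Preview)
Your proposal is correct and matches the paper's approach exactly: the corollary is stated in the paper as the $x$-coordinate restatement of Theorem~\ref{thm:csc-existv2}, with no separate proof given, and your derivation via the Emden--Fowler change of variables is precisely the intended reading. The only quibble is that your final ``subtle point'' is a non-issue, since the hypothesis $|\nabla K(x)| = O(|x|^{\beta-1})$ is already literally the assumption of Theorem~\ref{thm:csc-existv2}.
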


In \cite{CKN}, Caffarelli, Kohn and Nirenberg established the following inequality
\begin{equation}\label{eq:ckn}
\bigg( \int_{\R^n} |x|^{- b p} |u|^p {\rm d} x \bigg)^\frac{2}{p} \leq C_{a, b} \int_{\R^n} |x|^{- 2 a} | \nabla u |^2 {\rm d} x ~~~~~~ \textmd{for all} ~ u \in C_c^\infty (\R^n),
\end{equation}
where $n \geq 3$,
$$
- \infty < a < \frac{n - 2}{2}, ~~~~~~ a \leq b \leq a + 1 ~~~~~~ \textmd{and} ~~~~~~ p = \frac{2 n}{n - 2 + 2 (b - a)}.
$$
There have been a lot of works concerning the best constant, existence (or nonexistence) and symmetry (or symmetry-breaking) of extremal functions of \eqref{eq:ckn}; see \cite{CW,CC,DEL,LW,FS} and the references therein. Any extremal function of \eqref{eq:ckn} satisfies, with a proper normalization, the Euler-Lagrange equation
$$
- \,{\rm div} (|x|^{- 2 a} \nabla u) = |x|^{- b p} u^{p - 1} ~~~~~~ \textmd{in} ~ \R^n.
$$
Hsia, Lin and Wang in \cite{HLW} studied the asymptotic behavior of solutions of
\begin{equation}\label{eq:cknel}
- \,{\rm div} (|x|^{- 2 a} \nabla u) = |x|^{- b p} u^{p - 1}, ~~~ u > 0 ~~~ \textmd{in} ~ B_1 \setminus \{ 0 \}
\end{equation}
with $n \geq 3$,
\begin{equation}\label{eq:range}
0 \leq a < \frac{n - 2}{2}, ~~~~~~ a \leq b < a + 1 ~~~~~~ \textmd{and} ~~~~~~ p = \frac{2 n}{n - 2 + 2 (b - a)}.
\end{equation}
They showed that every singular solution $u \in C^2 (B_1 \setminus \{ 0 \})$ of \eqref{eq:cknel} is asymptotic to a global singular solution $u_0$, i.e.,
\begin{equation}\label{eq1=lw}
u(x) = u_0 (x) (1 + o(1)) ~~~~~~ \textmd{as} ~ |x| \to 0^+,
\end{equation}
where $u_0 (x) =u_0 (|x|) \in C^2 (\R \setminus \{ 0 \})$ is a global singular solution of \eqref{eq:cknel}. Guo, Li and Wan \cite{GLW} further established an arbitrary-order expansion in \eqref{eq1=lw} for
$$
0 \leq a < \frac{n - 2}{2}, ~~~~~~ a = b ~~~~~~ \textmd{and} ~~~~~~ p = \frac{2 n}{n - 2}.
$$

As the second part of this paper, we will establish the arbitrary-order expansion of singular solutions of \eqref{eq:cknel} under the assumption \eqref{eq:range}. Set $t = - \ln |x|$, $\theta = x/|x| \in \Sp^{n - 1}$ and
$$
w(t, \theta) = e^{ - \frac{n - 2 a - 2}{2} t } u(e^{- t} \theta) ~~~~~~ \textmd{for} ~ (t, \theta) \in \R_+ \times \Sp^{n - 1}.
$$
Then $w$ solves the equation
\begin{equation}\label{eq:w}
- w_{tt} - \Delta_\theta w + \frac{(n - 2 a - 2)^2}{4} w = w^{p - 1} ~~~~~~ \textmd{in} ~ \R_+ \times \Sp^{n - 1}.
\end{equation}
Let $u_0 (x) = u_0 (|x|)$ be a global singular solution (i.e., Fowler solution) of \eqref{eq:cknel} and $\zeta (t) = e^{ (2 + 2 a - n) t/2 } u_0 (e^{- t})$. Then $\zeta$ is a positive periodic solution of
\begin{equation}\label{eq:zeta}
- \zeta'' + \frac{(n - 2 a - 2)^2}{4} \zeta = \zeta^{p - 1} ~~~~~~ \textmd{in} ~ \R.
\end{equation}
Define the linearized operator of \eqref{eq:w} at a Fowler solution $\zeta$
\begin{equation}\label{eq:ckn-L}
\mathcal{L} := - \frac{\partial^2}{\partial t^2} - \Delta_\theta + \frac{(n - 2 a - 2)^2}{4} - (p - 1) \zeta^{p - 2},
\end{equation}
and the {\it index set} associated with $\zeta$ as
\begin{equation}\label{indset-02}
\aligned
\mathcal{S} [\zeta] := &~ \bigg\{ \sum_{i \geq 1} n_i \sigma_i > 0 : n_i \in \N ~ \textmd{with finitely many} ~ n_i > 0 \bigg\} \\
= &~ \bigg\{ \nu_i : 0 < \nu_1 < \nu_2 < \cdots < \nu_i < \nu_{i + 1} < \cdots,~ i \in \N_+ \bigg\}.
\endaligned
\end{equation}
Here $\{ \sigma_i \}_{i \geq 1}$ is the sequence of positive constants given in Lemmas \ref{lem:ckn-Lc} and \ref{lem:ckn-Ln} in Appendix \ref{AppendixB}. Note that $\nu_1 = \sigma_1$ and $\nu_2 = \min \{ 2 \sigma_1, \sigma_{n + 1} \}$.

\begin{theorem}\label{thm:ckn-expans}
Assume that \eqref{eq:range} holds. Suppse that $w$ is a positive solution of \eqref{eq:w} in $\R_+ \times \Sp^{n - 1}$ and $\zeta$ is a positive periodic solution of \eqref{eq:zeta} with the index set $\mathcal{S} [\zeta]$ satisfying
\begin{equation}\label{eq:w2zeta}
w(t, \theta) \to \zeta (t) ~~~~~~ \textmd{as} ~ t \to \infty ~ \textmd{uniformly for} ~ \theta \in \Sp^{n - 1}.
\end{equation}
Then for any positive integer $m$ and any $(t, \theta) \in (1, \infty) \times \Sp^{n - 1}$,
\begin{equation}\label{eq:ckn-expans}
\bigg| w(t, \theta) - \zeta (t) - \sum_{i = 1}^m \sum_{j = 0}^{i - 1} c_{ij} (t, \theta) t^j e^{- \nu_i t} \bigg| \leq C t^m e^{ - \nu_{m + 1} t },
\end{equation}
where $C$ is a positive constant depending only on $a$, $b$, $\zeta$ and $m$, and $c_{ij}$ is a bounded smooth function on $(1, \infty) \times \Sp^{n - 1}$ for each $i, j$ in the summation.
\end{theorem}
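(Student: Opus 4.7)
The plan is induction on $m$, constructing the expansion order by order in the Floquet hierarchy $\mathcal{S}[\zeta]$. Writing $\phi := w - \zeta$, equation \eqref{eq:w} is equivalent to
$$
\mathcal{L}\phi = N(\phi), \qquad N(\phi) := (\zeta+\phi)^{p-1} - \zeta^{p-1} - (p-1)\zeta^{p-2}\phi,
$$
where $\mathcal{L}$ is the linearized operator \eqref{eq:ckn-L} and $N(\phi) = O(\phi^2)$ as $\phi \to 0$. The hypothesis \eqref{eq:w2zeta} combined with a standard barrier argument applied to $\mathcal{L}$ yields the base case $|\phi(t,\theta)| \leq C e^{-\delta t}$ for some $\delta > 0$; this is essentially the exponential refinement of the Hsia--Lin--Wang convergence \eqref{eq1=lw}.

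For the inductive step I would decompose $\phi = \sum_{i \geq 0} \phi_i(t) X_i(\theta)$ into spherical harmonics, which reduces $\mathcal{L}$ to a family of ordinary differential operators
$$
\mathcal{L}_i := -\frac{{\rm d}^2}{{\rm d} t^2} + \lambda_i + \frac{(n-2a-2)^2}{4} - (p-1)\zeta^{p-2}
$$
with periodic coefficients. Floquet theory for each $\mathcal{L}_i$ (Lemmas \ref{lem:ckn-Lc} and \ref{lem:ckn-Ln}) provides characteristic exponents $\pm \sigma_i$ and a Green's function that inverts inhomogeneities of the form $t^j e^{-\nu t}$ into particular solutions of the same exponential order, picking up an extra factor of $t$ precisely when $\nu$ resonates with some $\sigma_i$. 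Assuming inductively that
$$
\phi = \sum_{i=1}^m \sum_{j=0}^{i-1} c_{ij}(t,\theta) t^j e^{-\nu_i t} + R_m, \qquad |R_m| \leq C t^m e^{-\nu_{m+1} t},
$$
I substitute into $\mathcal{L}\phi = N(\phi)$. The expansion of $N(\phi)$ produces source terms whose exponential exponents are sums $\nu_i + \nu_j$; since $\mathcal{S}[\zeta]$ is closed under addition by its definition \eqref{indset-02}, each such sum equals some $\nu_\ell$. Collecting sources of exponent $\nu_{m+1}$ and inverting $\mathcal{L}$ mode-by-mode defines the new coefficients $c_{m+1, 0}, \dots, c_{m+1, m}$, and a direct estimate on the resulting reinversion yields $|R_{m+1}| \leq C t^{m+1} e^{-\nu_{m+2} t}$, closing the induction.

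The main technical difficulty is the resonance bookkeeping: at each order one must identify exactly which modes $i$ satisfy $\sigma_i = \nu_\ell$ for some $\ell \leq m+1$, since this is what forces the polynomial prefactors $t^j$ and determines their maximum degree. A related complication is the nontriviality of $\ker \mathcal{L}$ coming from the dilation and translation invariances of \eqref{eq:w}, which renders the particular solutions of $\mathcal{L} u = f$ nonunique; I would fix a gauge (projecting away from the kernel in $L^2(\Sp^{n-1})$ in the $\theta$-variable and via a Wronskian normalization in $t$), with the ambiguity absorbed into the bounded smooth functions $c_{ij}$. The remaining estimates are routine weighted Schauder bounds combined with the exponential Green's function decay. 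Otherwise the argument parallels that used for Theorem \ref{thm:csc}, and is in fact simpler because the right-hand side $w^{p-1}$ of \eqref{eq:w} carries no explicit $(t,\theta)$-dependence through a curvature function $K$---all nontrivial coupling between modes comes from the autonomous nonlinearity $N(\phi)$.
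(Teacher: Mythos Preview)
Your proposal is correct and follows essentially the same strategy as the paper: linearize as $\mathcal{L}\phi = N(\phi)$, bootstrap from the Hsia--Lin--Wang decay to $\phi = O(e^{-\sigma_1 t})$, then iterate by expanding $N(\phi)$ and inverting $\mathcal{L}$ mode-by-mode via the Floquet/Green's function machinery of Lemmas \ref{lem:sol-form} and \ref{lem:sol-esti}, with the resonance $\nu = \sigma_i$ producing the extra factors of $t$. The paper organizes the iteration by explicitly separating the kernel contributions $\psi_k$ (arising from $\mathcal{S}_1 = \{\sigma_i\}$) from the particular solutions $\widetilde{\psi}_k$ (arising from $\mathcal{S}_2$, the nonlinear sums), and treats the cases $\mathcal{S}_1 \cap \mathcal{S}_2 = \varnothing$ and $\neq \varnothing$ in turn, but this is just a more explicit packaging of the same induction you describe.
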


Next we establish the existence of solutions of \eqref{eq:w} which satisfy the asymptotic expansion \eqref{eq:ckn-expans}. For a nonnegative function $f \in C^2 (\R_+ \times \Sp^{n - 1})$, define
\begin{equation}\label{eq:ckn-N}
\mathcal{N} (f) := - f_{tt} - \Delta_\theta f + \frac{(n - 2 a - 2)^2}{4} f - f^{p - 1}.
\end{equation}

\begin{theorem}\label{thm:ckn-existw1}
Assume that \eqref{eq:range} holds. Let $\zeta$ be a positive periodic solution of \eqref{eq:zeta} with the index set $\mathcal{S} [\zeta]$. Choose any $\nu > \sigma_1$ with $\nu \notin \mathcal{S} [\zeta]$. Suppose that $\widehat{w} \in C^{2, \alpha} (\R_+ \times \Sp^{n - 1})$ satisfies
\begin{equation}\label{eq:hatw-zeta}
| (\widehat{w} - \zeta) (t, \theta) | + | \nabla (\widehat{w} - \zeta) (t, \theta) | \to 0 ~~~~~~ \textmd{as} ~ t \to \infty ~ \textmd{uniformly for} ~ \theta \in \Sp^{n - 1},
\end{equation}
and, for any $(t, \theta) \in (1, \infty) \times \Sp^{n - 1}$,
\begin{equation}\label{eq:Nhatw}
| \mathcal{N} (\widehat{w}) (t, \theta) | + | \nabla ( \mathcal{N} (\widehat{w}) ) (t, \theta) | \leq C e^{- \nu t}
\end{equation}
for some positive constant $C$. Then there exist two constants $t_0 > 1, C > 0$ and a positive solution $w \in C^{2, \alpha} ([t_0, \infty) \times \Sp^{n - 1})$ of \eqref{eq:w} such that
$$
| (w - \widehat{w}) (t, \theta) | \leq C e^{- \nu t} ~~~~~~ \textmd{for any} ~ (t, \theta) \in (t_0, \infty) \times \Sp^{n - 1}.
$$
\end{theorem}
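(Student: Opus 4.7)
The plan is to write $w = \widehat{w} + \phi$ and solve for the correction $\phi$ via a fixed-point argument. Expanding $\mathcal{N}(\widehat{w}+\phi) = 0$ and separating linear from nonlinear contributions yields
\begin{equation*}
\mathcal{L}\phi \;=\; -\mathcal{N}(\widehat{w}) \,+\, A(t,\theta)\,\phi \,+\, Q(\phi),
\end{equation*}
where $\mathcal{L}$ is the linearized operator at $\zeta$ defined in \eqref{eq:ckn-L}, the coefficient $A := (p-1)(\widehat{w}^{p-2}-\zeta^{p-2})$ tends to zero uniformly in $\theta$ as $t\to\infty$ by \eqref{eq:hatw-zeta}, and the remainder $Q(\phi) := (\widehat{w}+\phi)^{p-1}-\widehat{w}^{p-1}-(p-1)\widehat{w}^{p-2}\phi = O(\phi^2)$ for small $\phi$. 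I would work in the weighted H\"older space with norm
\begin{equation*}
\|f\|_{\mathcal{X}^{k,\alpha}_\nu} := \sup_{t\geq t_0}\; e^{\nu t}\,\|f\|_{C^{k,\alpha}([t,t+1]\times \Sp^{n-1})},
\end{equation*}
and recast the equation as the fixed-point problem $\phi = G\mathcal{F}(\phi)$, where $\mathcal{F}(\phi) := -\mathcal{N}(\widehat{w}) + A\phi + Q(\phi)$ and $G$ is a right inverse of $\mathcal{L}$.

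The central ingredient is a bounded right inverse $G:\mathcal{X}^{0,\alpha}_\nu\to\mathcal{X}^{2,\alpha}_\nu$. To build it, I would expand the data $f$ in the spherical harmonic basis $\{X_i\}_{i\geq 0}$ and reduce $\mathcal{L}\phi = f$ to the family of ODEs $\mathcal{L}_i h_i = f_i$ on $[t_0,\infty)$ with periodic coefficients in $t$. Floquet theory, together with Lemmas \ref{lem:ckn-Lc}--\ref{lem:ckn-Ln}, provides two linearly independent homogeneous solutions with characteristic exponents $\pm\sigma_i$, and variation of parameters then yields a particular solution $h_i$ of order $e^{-\nu t}$ provided $\nu\neq\sigma_i$. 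This non-resonance condition holds for every $i$ because $\sigma_i\in\mathcal{S}[\zeta]$ and $\nu\notin\mathcal{S}[\zeta]$ by hypothesis. The limits of integration must be chosen mode-by-mode (using $\int_t^\infty$ when the integrand is integrable at infinity and $\int_{t_0}^t$ otherwise) to pick out the branch with the required decay. Since $\sigma_i\to\infty$ with $\lambda_i$, the resulting Green-function representation of $G$ satisfies Schauder-type bounds uniform in the mode index.

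Once $G$ is available, the fixed-point step is standard. By \eqref{eq:Nhatw} the inhomogeneity $\|\mathcal{N}(\widehat{w})\|_{\mathcal{X}^{0,\alpha}_\nu}$ is bounded; because $A(t,\theta)\to 0$ uniformly in $\theta$, the map $\phi\mapsto A\phi$ has operator norm $o(1)$ as $t_0\to\infty$; and because $\phi\in\mathcal{X}^{2,\alpha}_\nu$ implies $|\phi(t,\theta)|\leq e^{-\nu t}\|\phi\|_{\mathcal{X}^{2,\alpha}_\nu}$, the quadratic piece obeys $\|Q(\phi)\|_{\mathcal{X}^{0,\alpha}_\nu}\leq C\,e^{-\nu t_0}\,\|\phi\|_{\mathcal{X}^{2,\alpha}_\nu}^2$ since $\phi^2$ decays like $e^{-2\nu t}$. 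Choosing $t_0$ large enough, $\phi\mapsto G\mathcal{F}(\phi)$ is a contraction on a small closed ball of $\mathcal{X}^{2,\alpha}_\nu$, and its unique fixed point produces the desired correction with the required exponential control. Positivity of $w=\widehat{w}+\phi$ is automatic because $\widehat{w}\to\zeta>0$ and $\phi\to 0$ as $t\to\infty$, after taking $t_0$ larger if necessary.

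The main obstacle I expect is the linear step: producing the right inverse $G$ with an operator norm that is uniform across all spherical-harmonic modes. The per-mode construction via Floquet decomposition is explicit, but one must verify that the Green-function bounds for $\mathcal{L}_i$ do not degenerate as $\sigma_i\to\infty$. This requires a careful splitting of the variation-of-parameters integrals, combined with interior Schauder estimates on the unit cylinders $[t,t+1]\times\Sp^{n-1}$ to upgrade $C^{0,\alpha}$ input to $C^{2,\alpha}$ output without losing the exponential weight, so that the per-mode estimates assemble into a bounded operator on the full weighted space.
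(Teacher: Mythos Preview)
Your proposal is correct and follows essentially the same route as the paper: write $w=\widehat{w}+\phi$, rewrite $\mathcal{N}(\widehat{w}+\phi)=0$ as $\mathcal{L}\phi=-\mathcal{N}(\widehat{w})+\mathcal{P}(\phi)$, invoke a bounded right inverse $\mathcal{L}^{-1}:C_\nu^{0,\alpha}\to C_\nu^{2,\alpha}$ (the paper packages this as Lemma~\ref{lem:inverse}, whose proof is exactly the mode-by-mode Floquet/variation-of-parameters construction you describe), and close by a contraction on a ball in $C_\nu^{2,\alpha}$ for $t_0$ large. Your splitting $\mathcal{P}(\phi)=A\phi+Q(\phi)$ with $A=(p-1)(\widehat{w}^{p-2}-\zeta^{p-2})$ is just a repackaging of the paper's integral representation $\mathcal{P}(\phi)=\phi\,\mathcal{Q}(\phi)$ with $\mathcal{Q}(\phi)=(p-1)\int_0^1[(\widehat{w}+s\phi)^{p-2}-\zeta^{p-2}]\,ds$; both yield the same smallness $\|\mathcal{P}(\phi)\|\lesssim(\epsilon(t_0)+Re^{-\nu t_0})R$.
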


This paper is organized as follows. In Section \ref{Preli}, we recall several properties of operators $L$ and $L_i$ defined in \eqref{eq:yp-L} and \eqref{eq:yp-Li}. In Section \ref{Sec=3}, we show the asymptotic expansions in Theorems \ref{thm:csc}, \ref{thm:csc-expans} and \ref{thm:csc-expans-2}, and establish the existence results in Theorems \ref{thm:csc-existv1} and \ref{thm:csc-existv2}. In Section \ref{Sec=04}, we prove the expansion in Theorem \ref{thm:ckn-expans} and the existence in Theorem \ref{thm:ckn-existw1}. In Appendix \ref{Sec=05}, we give a lower bound for $\rho_i$ appearing in Lemmas \ref{lem:Han21} and \ref{lem:Han22}, and analyze some properties of the operator $\mathcal{L}$ defined in \eqref{eq:ckn-L}, which are used to prove the expansion in Theorem \ref{thm:ckn-expans}.

\section{Preliminaries}\label{Preli}

Here we recall several properties of operators $L$ and $L_i$ defined in \eqref{eq:yp-L} and \eqref{eq:yp-Li}, respectively. We refer to \cite{HLL,KMPS,MP,MPU} for more details.

\begin{lemma}\label{lem:Han21}
Let $\xi$ be the positive constant solution of \eqref{eq:xi}.
\begin{enumerate}[label = \rm(\roman*)]
\item For $i = 0$, ${\rm Ker} (L_0)$ has a basis $\cos (\sqrt{n - 2} t)$ and $\sin (\sqrt{n - 2} t)$.

\item There exists an increasing sequence of positive constants $\{ \rho_i \}_{i \geq 1}$, divergent to $\infty$, such that for any $i \geq 1$, ${\rm Ker} (L_i)$ has a basis $e^{- \rho_i t}$ and $e^{\rho_i t}$. Moreover, $\rho_1 = \cdots = \rho_n = 1$.
\end{enumerate}
\end{lemma}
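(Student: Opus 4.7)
The plan is straightforward: since $\xi$ is the positive \emph{constant} solution of \eqref{eq:xi}, we have $\xi''=0$, so the equation collapses to the algebraic identity $\frac{(n-2)^2}{4}\xi = K(0)\xi^{(n+2)/(n-2)}$, yielding
$$
K(0)\,\xi^{4/(n-2)} \;=\; \frac{(n-2)^2}{4}.
$$
Substituting this into the definition \eqref{eq:yp-Li} of $L_i$ makes the potential a constant, and a quick arithmetic simplification
$$
\frac{(n-2)^2}{4} - \frac{n+2}{n-2}\cdot\frac{(n-2)^2}{4} \;=\; \frac{(n-2)\bigl[(n-2)-(n+2)\bigr]}{4} \;=\; -(n-2)
$$
reduces the operator to the constant-coefficient ODE
$$
L_i \;=\; -\frac{d^2}{dt^2} + \bigl(\lambda_i - (n-2)\bigr).
$$

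For part (i), since $\lambda_0=0$, the kernel equation is $-h'' - (n-2)h = 0$, whose linearly independent solutions are $\cos(\sqrt{n-2}\,t)$ and $\sin(\sqrt{n-2}\,t)$. For part (ii), for every $i\geq 1$ we have $\lambda_i \geq \lambda_1 = n-1 > n-2$, so the coefficient $\lambda_i-(n-2)$ is strictly positive and we may set $\rho_i := \sqrt{\lambda_i - (n-2)}$. Then $L_i h = 0$ reads $h'' = \rho_i^{\,2} h$, giving the basis $\{e^{-\rho_i t},\,e^{\rho_i t}\}$ of $\mathrm{Ker}(L_i)$. The equalities $\rho_1 = \cdots = \rho_n = 1$ follow at once from $\lambda_1 = \cdots = \lambda_n = n-1$; the monotonicity of $\{\rho_i\}_{i\ge 1}$ and its divergence to $\infty$ are inherited from the corresponding properties (with multiplicity) of the spectrum of $-\Delta_\theta$ on $\Sp^{n-1}$.

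There is effectively no obstacle here beyond the algebraic reduction enabled by the constancy of $\xi$; the only point that genuinely needs flagging is the strict inequality $\lambda_i > n-2$ for $i\ge 1$, which guarantees that the kernel is spanned by real exponentials (rather than by trigonometric functions, as in the resonant case $i=0$) and thereby dictates the exponential indicial rates governing the entire subsequent Floquet/Fredholm analysis.
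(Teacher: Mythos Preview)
Your proof is correct and matches the paper's own approach. The paper does not give a separate proof of this lemma (it cites \cite{HLL,KMPS,MP,MPU}), but the identical computation---$K(0)\xi^{4/(n-2)}=\tfrac{(n-2)^2}{4}$, hence $L_i=-\tfrac{d^2}{dt^2}+\lambda_i-(n-2)$, and then $\rho_i^2=\lambda_i-(n-2)$---appears verbatim in the proof of Lemma~\ref{lem:rhoi2}(i) and in the analogous Lemma~\ref{lem:ckn-Lc}.
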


\begin{lemma}\label{lem:Han22}
Let $\xi$ be a positive nonconstant periodic solution of \eqref{eq:xi}.
\begin{enumerate}[label = \rm(\roman*)]
\item For $i = 0$, ${\rm Ker} (L_0)$ has a basis $p_0^+$ and $p_0^- + c t p_0^+$, for some smooth periodic functions $p_0^+$ and $p_0^-$ on $\R$, and some constant $c$.

\item There exists an increasing sequence of positive constants $\{ \rho_i \}_{i \geq 1}$, divergent to $\infty$, such that for any $i \geq 1$, ${\rm Ker} (L_i)$ has a basis $e^{- \rho_i t} p_i^+$ and $e^{\rho_i t} p_i^-$, for some smooth periodic functions $p_i^+$ and $p_i^-$ on $\R$. Moreover, for $i = 1, \dots, n$,
$$
\rho_i = 1, ~~~~~~ p_i^+ = \frac{n - 2}{2} \xi - \xi' ~~~~~~ \textmd{and} ~~~~~~ p_i^- = \frac{n - 2}{2} \xi + \xi'.
$$
\end{enumerate}

In addition, all periodic functions in $\rm(i)$ and $\rm(ii)$ have the same period as $\xi$.
\end{lemma}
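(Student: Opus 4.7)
The plan is to apply Floquet theory to each Hill-type operator
\[
L_i = -\frac{d^2}{dt^2} + Q_i(t), \qquad Q_i(t) := \lambda_i + \frac{(n-2)^2}{4} - \frac{n+2}{n-2}\, K(0)\, \xi(t)^{4/(n-2)},
\]
whose coefficient is $T$-periodic, where $T$ is the period of $\xi$. Letting $\Phi_i(t)$ be the fundamental matrix of $L_i y = 0$ with $\Phi_i(0) = I$, the absence of a first-order term forces $\det \Phi_i \equiv 1$, so the two Floquet multipliers $\mu_i^{\pm}$ are reciprocals and are the roots of $\mu^2 - \Delta_i \mu + 1 = 0$ with $\Delta_i := \mathrm{tr}\,\Phi_i(T)$. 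The shape of $\ker L_i$ on $\R$ is then controlled by whether $|\Delta_i|$ is larger than, equal to, or less than $2$.

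For part (i), differentiating \eqref{eq:xi} with respect to $t$ gives $L_0 \xi' = 0$. Since $\xi$ is nonconstant, $\xi'$ is a nonzero $T$-periodic element of $\ker L_0$; hence $\mu_0 = 1$ is a Floquet multiplier and, by reciprocity, it is a double root, so $|\Delta_0| = 2$. Standard Floquet theory with a degenerate multiplier at $1$ then produces a basis of the asserted form $\{p_0^+,\; p_0^- + c\, t\, p_0^+\}$ in which $p_0^+, p_0^-$ are both $T$-periodic (one may take $p_0^+ = \xi'$), while $c = 0$ or $c \neq 0$ according to whether $\Phi_0(T)$ is diagonalizable or a nontrivial Jordan block. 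Both sub-cases are consistent with the statement.

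For part (ii), I would first verify the explicit formulas for $i = 1, \dots, n$ by a direct computation: substituting $y(t) = e^{-t}\bigl(\tfrac{n-2}{2}\xi(t) - \xi'(t)\bigr)$ into $L_i y$ and using \eqref{eq:xi} and its $t$-derivative to simplify shows that $L_i y \equiv 0$ precisely when $\lambda_i = n-1$, which holds for $i = 1, \dots, n$. Since the multipliers $e^{\pm T}$ are then distinct and reciprocal, the companion solution $e^{t}\bigl(\tfrac{n-2}{2}\xi + \xi'\bigr)$ follows either by a symmetric computation or by reduction of order with the constant Wronskian. Hence $\rho_i = 1$ and $p_i^{\pm}$ take the stated form for $i = 1, \dots, n$. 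For general $i \geq n+1$, since $L_i = L_0 + \lambda_i$ with $\lambda_i > 0$, the required claim is that the two multipliers are real and reciprocal with $|\Delta_i| > 2$; this is equivalent to saying $0$ lies in an open spectral gap of $L_i$ (or $-\lambda_i$ in a gap of $L_0$), which is the classical Hill-theoretic analysis carried out for the Fowler potential in \cite{MPU, KMPS}. Writing these multipliers as $e^{\pm \rho_i T}$ with $\rho_i > 0$ supplies the Floquet bases $e^{\mp \rho_i t} p_i^{\pm}$ with $p_i^\pm$ both $T$-periodic, and the large-$\lambda_i$ comparison $L_i \approx -\partial_t^2 + \lambda_i$ gives the asymptotic $\rho_i \sim \sqrt{\lambda_i} \to \infty$.

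The principal obstacle is the non-degeneracy step for $i \geq n+1$: one must exclude the boundary case $|\Delta_i| = 2$, where the multipliers would collide and force polynomial-in-$t$ solutions in place of pure Floquet exponentials. This is a spectral property tied to the specific structure of the Fowler potential $\xi(t)^{4/(n-2)}$ and is treated in \cite{MPU, KMPS}; granting that analysis, the remaining assertions (existence and divergence of the $\rho_i$, common periodicity of all the $p_i^{\pm}$ and $p_0^{\pm}$) follow directly from the Floquet setup.
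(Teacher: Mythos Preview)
The paper does not actually prove this lemma; it is stated in Section~\ref{Preli} with a reference to \cite{HLL,KMPS,MP,MPU}. The closest thing to a proof in the paper is the treatment of the analogous operators $\mathcal{L}_i$ in Appendix~\ref{AppendixB} (Lemmas~\ref{lem:ckn-L0} and \ref{lem:ckn-Ln}), so I compare your sketch to that.

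Your Floquet-theoretic framework matches the paper's. For $i=0$ the paper's Appendix constructs both kernel elements explicitly by differentiating the two-parameter family $\zeta_{\varepsilon+\delta}(t+T+\tau)$ in $\tau$ and in $\delta$; you only exhibit $\xi'$ and leave the second solution to abstract Floquet theory, which is fine but less concrete. For $i=1,\dots,n$ your direct verification is more elementary than the paper's conjugation trick $\zeta^{-p/2}\mathcal{L}_1\zeta^{p/2}$, which is used there to force a maximum principle; once you have the explicit solutions $e^{\pm t}(\tfrac{n-2}{2}\xi\mp\xi')$ no further spectral argument is needed, so your route is shorter.

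There is, however, a gap in your treatment of $i\geq n+1$. You frame the obstruction as excluding only the boundary case $|\Delta_i|=2$, but you must also rule out $|\Delta_i|<2$ (complex multipliers, Type~IV/V oscillatory solutions) and the case $\Delta_i<-2$ (real negative multipliers, which would give $2T$-periodic rather than $T$-periodic Floquet factors, contradicting the final clause of the lemma). The paper's Appendix handles all of these at once by observing that $Q_i(t)>0$ for $i\geq n+1$, so $\mathcal{L}_i$ satisfies the maximum principle and no bounded or sign-changing kernel element can exist; this also yields positivity of $q_i^+$, forcing $\Delta_i>2$. You should either invoke this maximum-principle argument directly or make clear that the cited references supply it. Finally, you assert divergence $\rho_i\to\infty$ but omit the monotonicity $\rho_{i+1}\geq\rho_i$; the paper's Appendix obtains this by a comparison argument (applying the maximum principle to $q_i^+e^{-\sigma_i t}-q_{i+1}^+e^{-\sigma_{i+1}t}$), which you would need to supply as well.
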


We also need the following decay estimates whose proofs can be found in Han-Li-Li \cite{HLL}.

\begin{lemma}\label{lem:HanA8}
Let $\gamma \geq 1$ be a constant, $m \geq 0$ be an integer, and $f$ be a continuous function in $[1, \infty) \times \Sp^{n - 1}$ satisfying
$$
| f(t, \theta) | \leq C t^m e^{- \gamma t}, ~~~~~~ \forall ~ (t, \theta) \in [1, \infty) \times \Sp^{n - 1}.
$$
Let $\varphi$ be a solution of $L \varphi = f$ in $(1, \infty) \times \Sp^{n - 1}$ such that $\varphi (t, \theta) \to 0$ as $t \to \infty$ uniformly for $\theta \in \Sp^{n - 1}$.
\begin{enumerate}[label = \rm(\roman*)]
\item If $\gamma = \rho_1 = 1$, then for any $(t, \theta) \in (1, \infty) \times \Sp^{n - 1}$,
$$
| \varphi (t, \theta) | \leq C t^{m + 1} e^{- t}.
$$

\item If $\rho_l < \gamma \leq \rho_{l + 1}$ for some positive integer $l$, then for any $(t, \theta) \in (1, \infty) \times \Sp^{n - 1}$,
$$
\bigg| \varphi (t, \theta) - \sum_{i = 1}^l c_i e^{- \rho_i t} p_i^+ (t) X_i (\theta) \bigg| \leq
\bigg\{
\aligned
& C t^m e^{- \gamma t} ~~~~~~ & \textmd{if} & ~ \rho_l < \gamma < \rho_{l + 1}, \\
& C t^{m + 1} e^{- \gamma t} ~~~~~~ & \textmd{if} & ~ \gamma = \rho_{l + 1},
\endaligned
$$
with some constants $c_i$ for $i = 1, \dots, l$.
\end{enumerate}
\end{lemma}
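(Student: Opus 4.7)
The plan is to decouple the cylinder PDE into a countable family of scalar ODEs via spherical harmonic decomposition, and then to analyse each ODE using the explicit Floquet-type structure of $L_i$ supplied by Lemmas \ref{lem:Han21} and \ref{lem:Han22}. Expanding $f(t,\theta) = \sum_{i\ge 0} f_i(t) X_i(\theta)$ and seeking $\varphi(t,\theta) = \sum_{i\ge 0} \varphi_i(t) X_i(\theta)$, the commutation of $L$ with $\Delta_\theta$ reduces the problem to solving $L_i \varphi_i = f_i$ with $\varphi_i(t) \to 0$ as $t \to \infty$, and the pointwise hypothesis on $f$ yields componentwise bounds $|f_i(t)| \leq C_i t^m e^{-\gamma t}$ that I will have to control uniformly in $i$ at the end.

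For each fixed $i$, I would construct $\varphi_i$ by variation of parameters built on the explicit basis of $\ker L_i$, noting that the Wronskian of the two fundamental solutions is periodic and bounded away from zero. The endpoints of integration are forced by the one-sided growth of the two fundamental solutions together with the requirement $\varphi_i \to 0$. This produces three regimes. When $\rho_i > \gamma$, both $e^{-\rho_i t} p_i^+$ and $e^{\rho_i t} p_i^-$ lie on the favourable side of $e^{-\gamma t}$ and the standard Green's representation yields the clean bound $|\varphi_i(t)| \leq C t^m e^{-\gamma t}$. When $\rho_i < \gamma$, the slowly decaying kernel mode $e^{-\rho_i t} p_i^+$ has to be extracted: writing $\varphi_i(t) = c_i e^{-\rho_i t} p_i^+(t) + \widetilde\varphi_i(t)$ with $c_i$ fixed by the convergent pairing of $f_i$ against the growing fundamental solution over $[1,\infty)$, the remainder satisfies $|\widetilde\varphi_i(t)| \leq C t^m e^{-\gamma t}$. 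At resonance $\rho_i = \gamma$, the elementary identity $\int^{t} e^{\rho_i s}\, s^m e^{-\gamma s}\,ds \sim t^{m+1}/(m+1)$ produces the extra factor of $t$ responsible for the $C t^{m+1} e^{-\gamma t}$ bound in case (ii); the same mechanism, applied at $i = 1, \ldots, n$ where $\rho_i = 1 = \gamma$, accounts for case (i). The mode $i = 0$, whose kernel consists of at most linearly growing periodic functions by Lemmas \ref{lem:Han21}--\ref{lem:Han22}, contributes only an error of the form already allowed by the right-hand side.

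The main obstacle is recombining the mode-by-mode ODE estimates into a genuinely pointwise bound on $\varphi$, because for merely continuous $f$ the coefficients $C_i = \|f_i(t)\|$ need not enjoy any useful decay in $i$. I would resolve this by splitting into low and high modes: choose $I$ large enough that $\rho_i > \gamma$ for all $i > I$; the low modes $i \leq l$ produce the finitely many extracted kernel terms $\sum_{i=1}^{l} c_i e^{-\rho_i t} p_i^+(t) X_i(\theta)$ that appear in the statement, while for the high modes the fact that $\lambda_i \to \infty$ makes $L_i$ uniformly coercive, giving an elliptic-type bound $\|\varphi_i\|_{L^\infty_t} \leq C \lambda_i^{-1} \|f_i\|_{L^\infty_t}$ with a constant independent of $i$. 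Reassembling via interior Schauder estimates on cylindrical slabs $[T-1,T+1] \times \Sp^{n-1}$, which upgrade the mode-wise $L^2$ control back to the pointwise control demanded by the lemma, yields the desired inequalities.
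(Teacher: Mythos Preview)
The paper does not supply its own proof of this lemma: it is stated in Section~\ref{Preli} with the remark ``whose proofs can be found in Han--Li--Li \cite{HLL}'', and is then used as a black box throughout Section~\ref{Sec=3}. So there is no in-paper argument to compare against; the relevant comparison is with \cite[Lemma~A.8]{HLL}.

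Your outline is precisely the standard argument used there: spherical harmonic decomposition to reduce to the scalar ODEs $L_i\varphi_i=f_i$, variation of parameters against the Floquet basis $\{e^{-\rho_i t}p_i^+,\,e^{\rho_i t}p_i^-\}$ with endpoints of integration dictated by the decay requirement, extraction of the finitely many slowly decaying kernel modes $e^{-\rho_i t}p_i^+X_i$ for $\rho_i<\gamma$, and the $t^{m+1}$ factor at resonance coming from $\int^t s^m\,ds$. You have also correctly identified the one genuine technical issue, namely that mode-by-mode $L^\infty$ bounds do not sum pointwise; your proposed resolution via coercivity of $L_i$ for large $i$ (equivalently, the maximum principle once $\lambda_i$ dominates the potential) together with interior Schauder estimates on unit slabs is exactly how this is handled. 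In short, your plan matches the cited proof and is correct.
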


\section{Conformal scalar curvature equation}\label{Sec=3}

In this section, we first show the asymptotic expansion in Theorem \ref{thm:csc-expans} using analysis of the linearized operators at the Fowler solutions, along the lines of methods in Korevaar-Mazzeo-Pacard-Schoen \cite{KMPS} and Han-Li-Li \cite{HLL}. The main difficulty here is that $K$ is a positive function instead of a positive constant. Furthermore, we establish the refined second-order asymptotic expansion as stated in Theorem \ref{thm:csc-expans-2}. Then we apply the contraction mapping principle to obtain the existence results in Theorems \ref{thm:csc-existv1} and \ref{thm:csc-existv2}, which is inspired by Han-Li \cite{HL} and Mazzeo-Pacard \cite{MP}.

\subsection{Asymptotic expansion}

\begin{proof}[Proof of Theorem \ref{thm:csc-expans}] Throughout the proof, we always assume that $\xi$ is a positive nonconstant periodic solution of \eqref{eq:xi}. The proof is similar for the case when $\xi$ is the positive constant solution. Without lost of generality, we may suppose $K(0) = 1$.

Define
$$
\varphi (t, \theta) = v(t, \theta) - \xi (t) ~~~~~~ \textmd{for} ~ (t, \theta) \in \R_+ \times \Sp^{n - 1}.
$$
Then by \eqref{eq:v2xi}, we know
$$
\varphi (t, \theta) \to 0 ~~~~~~ \textmd{as} ~ t \to \infty ~ \textmd{uniformly for} ~ \theta \in \Sp^{n - 1}.
$$
From the assumption $K(e^{- t} \theta) - 1 = O(e^{- \beta t})$, we have
\begin{equation}\label{eq:Lvphi}
\aligned
L \varphi & = K (\xi + \varphi)^\frac{n + 2}{n - 2} - \xi^\frac{n + 2}{n - 2} - \frac{n + 2}{n - 2} \xi^\frac{4}{n - 2} \varphi \\
& = \bigg[ (\xi + \varphi)^\frac{n + 2}{n - 2} - \xi^\frac{n + 2}{n - 2} - \frac{n + 2}{n - 2} \xi^\frac{4}{n - 2} \varphi \bigg] + (K - 1) (\xi + \varphi)^\frac{n + 2}{n - 2} \\
& =: F(\varphi) + O(e^{- \beta t}).
\endaligned
\end{equation}
We decompose the index set $\mathcal{I} [\xi]$ into two parts. Let
$$
\mathcal{I}_1 = \{ \rho_i : i \geq 1 \}
$$
and
$$
\mathcal{I}_2 = \bigg\{ \sum_{i = 1}^k n_i \rho_i > 0 : n_i \in \N ~ \textmd{and} ~ \sum_{i = 1}^k n_i \geq 2 \bigg\}.
$$
Denote the set $\mathcal{I}_2$ by a strictly increasing sequence $\{ \widetilde{\rho}_i \}_{i \geq 1}$. Obviously, $\widetilde{\rho}_1 = 2$.

First we prove \eqref{eq:csc-expans} for the simple case: $\mathcal{I}_1 \cap \mathcal{I}_2 = \varnothing$. Set
$$
\aligned
1 = \rho_1 \leq \cdots \leq \rho_{k_1} < \widetilde{\rho}_1 < \cdots & < \widetilde{\rho}_{l_1} \\
& < \rho_{k_1 + 1} \leq \cdots \leq \rho_{k_2} < \widetilde{\rho}_{l_1 + 1} < \cdots.
\endaligned
$$

{\bf Case 1:} $\beta = \rho_1 = 1$. It follows from \cite[Theorem 1]{TZ06} that $\varphi = O(e^{- 3 t/4})$. Since $| F(\varphi) | \leq C \varphi^2$, we have $F(\varphi) = O(e^{- 3 t/2})$, and thus $L \varphi = O(e^{- t})$. By Lemma \ref{lem:HanA8}, we know that for any $(t, \theta) \in (1, \infty) \times \Sp^{n - 1}$,
$$
| \varphi (t, \theta) | \leq C t e^{- t}.
$$

{\bf Case 2:} $\rho_m < \beta \leq \rho_{m + 1}$ for some $m \in \{ 1, 2, \dots, k_1 - 1 \}$. In this case, we have $1 = \rho_1 < \beta < \widetilde{\rho}_1 = 2$. Again, by \cite[Theorem 1]{TZ06} we obtain $\varphi = O(e^{- \beta t/2})$ and $L \varphi = O(e^{- \beta t})$. By Lemma \ref{lem:HanA8} we can get
$$
\bigg| \varphi (t, \theta) - \sum_{i = 1}^m c_i p_i^+ (t) X_i (\theta) e^{- \rho_i t} \bigg| \leq
\bigg\{
\aligned
& C e^{- \beta t} ~~~~~~ & \textmd{if} & ~ \rho_m < \beta < \rho_{m + 1}, \\
& C t e^{- \beta t} ~~~~~~ & \textmd{if} & ~ \beta = \rho_{m + 1},
\endaligned
$$
with some constants $c_i$ for $i = 1, \dots, m$.

{\bf Case 3:} $\rho_{k_1} < \beta \leq \widetilde{\rho}_1 = 2$. From Case 2, we know that $\varphi = O(e^{- \rho_1 t})$. Therefore, $F(\varphi) = O(e^{ - \widetilde{\rho}_1 t })$ and $L \varphi = O(e^{- \beta t})$. By Lemma \ref{lem:HanA8} we have
$$
\bigg| \varphi (t, \theta) - \sum_{i = 1}^{k_1} c_i p_i^+ (t) X_i (\theta) e^{- \rho_i t} \bigg| \leq C e^{- \beta t},
$$
for some constants $c_i$, $i = 1, \dots, k_1$.

From now on, we denote
\begin{equation}\label{eq:eta1}
\eta_1 (t, \theta) := \sum_{i = 1}^{k_1} c_i p_i^+ (t) X_i (\theta) e^{- \rho_i t},
\end{equation}
and
$$
\varphi_1 (t, \theta) := \varphi (t, \theta) - \eta_1 (t, \theta).
$$
Then we have $L \eta_1 = 0$, and by \eqref{eq:Lvphi},
\begin{equation}\label{eq:Lvphi1}
L \varphi_1 = F(\varphi) + O(e^{- \beta t}).
\end{equation}
For each $\widetilde{\rho}_i \in \mathcal{I}_2$, we consider nonnegative integers $n_1, \dots, n_k$ such that
\begin{equation}\label{eq:trhoi}
n_1 \rho_1 + \cdots + n_k \rho_k = \widetilde{\rho}_i ~~~~~~ \textmd{and} ~~~~~~ n_1 + \cdots + n_k \geq 2.
\end{equation}
Notice that there are only finitely many collections of nonnegative integers $n_1, \dots, n_k$ satisfying \eqref{eq:trhoi}. Set
$$
\widetilde{K}_i = \max \{ n_1 \deg (X_1) + \cdots + n_k \deg (X_k) : n_1, \dots, n_k \in \N ~ \textmd{satisfy \eqref{eq:trhoi}} \},
$$
and
$$
\widetilde{M}_i = \max \{ m : \deg (X_m) \leq \widetilde{K}_i \}.
$$

{\bf Case 4:} $\widetilde{\rho}_m < \beta \leq \widetilde{\rho}_{m + 1}$ for some $m \in \{ 1, 2, \dots, l_1 - 1 \}$. We claim that there exists $\widetilde{\eta}_1$ having the form of
\begin{equation}\label{eq:teta1m}
\widetilde{\eta}_1 (t, \theta) = \sum_{i = 1}^m \sum_{j = 0}^{ \widetilde{M}_i } \widetilde{c}_{ij} (t) X_j (\theta) e^{ - \widetilde{\rho}_i t }
\end{equation}
with $\widetilde{c}_{ij}$ being smooth periodic functions such that for
\begin{equation}\label{eq:tvphi1m}
\widetilde{\varphi}_1 := \varphi_1 - \widetilde{\eta}_1,
\end{equation}
we have $L \widetilde{\varphi}_1 = O(e^{- \beta t})$. Take some function $\widetilde{\eta}_1$ to be determined later, and define $\widetilde{\varphi}_1$ as \eqref{eq:tvphi1m}. By \eqref{eq:Lvphi1} we get
\begin{equation}\label{eq:Ltvphi1m}
L \widetilde{\varphi}_1 = L \varphi_1 - L \widetilde{\eta}_1 = F(\varphi) - L \widetilde{\eta}_1 + O(e^{- \beta t}).
\end{equation}
We will construct a proper $\widetilde{\eta}_1$ such that
$$
F(\varphi) - L \widetilde{\eta}_1 = O(e^{- \beta t}).
$$
Now we expand $F(\varphi)$. From Case 3, we know that $\varphi = O(e^{- \rho_1 t})$, $\eta_1 = O(e^{- \rho_1 t})$ and $\varphi_1 = O(e^{- 2 \rho_1 t})$. Therefore,
\begin{equation}\label{eq:Feta1}
F(\varphi) = \sum_{i = 2}^{[\beta]} a_i \varphi^i + O(e^{- \beta t}) = \sum_{i = 2}^{[\beta]} a_i (\eta_1 + \varphi_1)^i + O(e^{- \beta t}),
\end{equation}
where $[\cdot]$ is the floor function and $a_i = a_i (t)$ is a smooth periodic function (with the same period as $\xi$). For the lower order terms involving $\varphi_1$, we have
\begin{equation}\label{eq:lotvphi1}
\eta_1 \varphi_1 = O(e^{- 3 \rho_1 t}) ~~~~~~ \textmd{and} ~~~~~~ \varphi_1^2 = O(e^{- 4 \rho_1 t}).
\end{equation}
For the term involving $\eta_1$ only, by \eqref{eq:eta1} and \cite[Lemma 2.4]{HLL} we have
\begin{equation}\label{eq:sumeta1}
\aligned
\sum_{i = 2}^{[\beta]} a_i \eta_1^i & = \sum_{n_1 + \cdots + n_k \geq 2}^{[\beta]} a_{n_1 \cdots n_k} (t) X_1^{n_1} \cdots X_k^{n_k} e^{ - (n_1 \rho_1 + \cdots + n_k \rho_k) t } + O(e^{- \beta t}) \\
& = \sum_{i = 1}^m \sum_{j = 0}^{ \widetilde{M}_i } a_{ij} (t) X_j (\theta) e^{ - \widetilde{\rho}_i t } + O(e^{- \beta t}),
\endaligned
\end{equation}
where $a_{ij}$ is a smooth periodic function (with the same period as $\xi$). We discuss this in two subcases.

{\it Subcase 1:} $\widetilde{\rho}_m < 3 \rho_1$. Then we have $\beta \leq \widetilde{\rho}_{m + 1} \leq 3 \rho_1$, and thus by \eqref{eq:lotvphi1},
$$
\eta_1 \varphi_1 = O(e^{- \beta t}) ~~~~~~ \textmd{and} ~~~~~~ \varphi_1^2 = O(e^{- \beta t}).
$$
This together with \eqref{eq:Ltvphi1m}-\eqref{eq:sumeta1} implies that
$$
L \widetilde{\varphi}_1 = - L \widetilde{\eta}_1 + \sum_{i = 1}^m \sum_{j = 0}^{ \widetilde{M}_i } a_{ij} (t) X_j (\theta) e^{ - \widetilde{\rho}_i t } + O(e^{- \beta t}).
$$
Since $\mathcal{I}_1 \cap \mathcal{I}_2 = \varnothing$, from \cite[Lemma A.2 and Remark A.5]{HLL} there exists a smooth periodic function $\widetilde{c}_{ij}$ such that
\begin{equation}\label{eq:Ljcijrhoi}
L_j ( \widetilde{c}_{ij} (t) e^{ - \widetilde{\rho}_i t } ) = a_{ij} (t) e^{ - \widetilde{\rho}_i t } ~~~~~~ \textmd{for each} ~ 1 \leq i \leq m, 0 \leq j \leq \widetilde{M}_i,
\end{equation}
where $L_j$ is defined as in \eqref{eq:yp-Li}. Thus we obtain $\widetilde{\eta}_1$ to be the form \eqref{eq:teta1m} and have
$$
L \widetilde{\varphi}_1 = O(e^{- \beta t}).
$$
Recall that $\widetilde{\varphi}_1 = O(e^{ - \widetilde{\rho}_1 t })$, and there is no $\rho_i$ between $\widetilde{\rho}_1$ and $\beta$. It follows from Lemma \ref{lem:HanA8} that $\widetilde{\varphi}_1 = O(e^{- \beta t})$. That is,
$$
| \varphi_1 - \widetilde{\eta}_1 | \leq C e^{- \beta t}.
$$

{\it Subcase 2:} $\widetilde{\rho}_m \geq 3 \rho_1$. Let $n_1$ be the largest integer such that $\widetilde{\rho}_{n_1} < 3 \rho_1$, then $\widetilde{\rho}_{n_1 + 1} = 3 \rho_1 \leq \widetilde{\rho}_m < \beta$. Now we take $\widetilde{\eta}_{11}$ to be the summation \eqref{eq:teta1m} from $1$ to $n_1$, i.e,
$$
\widetilde{\eta}_{11} (t, \theta) = \sum_{i = 1}^{n_1} \sum_{j = 0}^{ \widetilde{M}_i } \widetilde{c}_{ij} (t) X_j (\theta) e^{ - \widetilde{\rho}_i t }.
$$
Then
\begin{equation}\label{eq:Leta11}
L \widetilde{\eta}_{11} = \sum_{i = 1}^{n_1} \sum_{j = 0}^{ \widetilde{M}_i } a_{ij} (t) X_j (\theta) e^{ - \widetilde{\rho}_i t }.
\end{equation}
Let $\widetilde{\varphi}_{11} = \varphi_1 - \widetilde{\eta}_{11}$. By \eqref{eq:Lvphi1} and \eqref{eq:Feta1}-\eqref{eq:Leta11} we have
$$
L \widetilde{\varphi}_{11} = F(\varphi) - L \widetilde{\eta}_{11} + O(e^{- \beta t}) = O(e^{ - \widetilde{\rho}_{n_1 + 1} t }).
$$
Since there is no $\rho_i$ between $\widetilde{\rho}_1$ and $\widetilde{\rho}_{n_1 + 1}$, it follows from $\widetilde{\varphi}_{11} = O(e^{ - \widetilde{\rho}_1 t })$ and Lemma \ref{lem:HanA8} that
$$
\widetilde{\varphi}_{11} = O(e^{ - \widetilde{\rho}_{n_1 + 1} t }).
$$

Now we are in a similar situation as at the beginning of Case 4, with $\widetilde{\rho}_{n_1 + 1} = 3 \rho_1$ replacing $\widetilde{\rho}_1 = 2 \rho_1$. If $\widetilde{\rho}_m < 4 \rho_1$, we proceed as in Step 1. For the sake of completeness, we present the details here. We claim that there exists $\widetilde{\eta}_{111}$ having the form of
\begin{equation}\label{eq:teta111}
\widetilde{\eta}_{111} (t, \theta) = \sum_{i = n_1 + 1}^m \sum_{j = 0}^{ \widetilde{M}_i } \widetilde{c}_{ij} (t) X_j (\theta) e^{ - \widetilde{\rho}_i t }
\end{equation}
such that for
$$
\widetilde{\varphi}_{111} := \widetilde{\varphi}_{11} - \widetilde{\eta}_{111} = \varphi_1 - \widetilde{\eta}_{11} - \widetilde{\eta}_{111},
$$
we have
$$
L \widetilde{\varphi}_{111} = O(e^{- \beta t}).
$$
Notice that by \eqref{eq:Lvphi1},
\begin{equation}\label{eq:Ltvphi111}
L \widetilde{\varphi}_{111} = F(\varphi) - L \widetilde{\eta}_{11} - L \widetilde{\eta}_{111} + O(e^{- \beta t}).
\end{equation}
Here we can write
\begin{equation}\label{eq:Fteta11}
F(\varphi) = \sum_{i = 2}^{[\beta]} a_i (\eta_1 + \widetilde{\eta}_{11} + \widetilde{\varphi}_{11})^i + O(e^{- \beta t}).
\end{equation}
For the lower order terms, we have
$$
\widetilde{\eta}_{11}^2 = O(e^{- 4 \rho_1 t}), ~~~~~~ \widetilde{\varphi}_{11}^2 = O(e^{- 6 \rho_1 t}), ~~~~~~ \widetilde{\eta}_{11} \widetilde{\varphi}_{11} = O(e^{- 5 \rho_1 t}),
$$
and
\begin{equation}\label{eq:eta1teta11}
\eta_1 \widetilde{\eta}_{11} = O(e^{- 3 \rho_1 t}), ~~~~~~ \eta_1 \widetilde{\varphi}_{11} = O(e^{- 4 \rho_1 t}).
\end{equation}
Recall that we now have $3 \rho_1 \leq \widetilde{\rho}_m < \beta \leq \widetilde{\rho}_{m + 1} \leq 4 \rho_1$. Therefore,
$$
\widetilde{\eta}_{11}^2 = O(e^{- \beta t}), ~~~ \widetilde{\varphi}_{11}^2 = O(e^{- \beta t}), ~~~ \widetilde{\eta}_{11} \widetilde{\varphi}_{11} = O(e^{- \beta t}) ~~~ \textmd{and} ~~~ \eta_1 \widetilde{\varphi}_{11} = O(e^{- \beta t}).
$$
This together with \eqref{eq:sumeta1}, \eqref{eq:Leta11}-\eqref{eq:eta1teta11} implies that
$$
L \widetilde{\varphi}_{111} = \sum_{i = n_1 + 1}^m \sum_{j = 0}^{ \widetilde{M}_i } a_{ij} (t) X_j (\theta) e^{ - \widetilde{\rho}_i t } + 2 a_2 \eta_1 \widetilde{\eta}_{11} - L \widetilde{\eta}_{111} + O(e^{- \beta t}).
$$
Note that we already know the expressions of $\eta_1$ and $\widetilde{\eta}_{11}$. As in Step 1, since $\mathcal{I}_1 \cap \mathcal{I}_2 = \varnothing$, by \cite[Lemma A.2 and Remark A.5]{HLL} we can take $\widetilde{\eta}_{111}$ having the form of \eqref{eq:teta111} such that
$$
\sum_{i = n_1 + 1}^m \sum_{j = 0}^{ \widetilde{M}_i } a_{ij} (t) X_j (\theta) e^{ - \widetilde{\rho}_i t } + 2 a_2 \eta_1 \widetilde{\eta}_{11} - L \widetilde{\eta}_{111} = O(e^{- \beta t}).
$$
Then $L \widetilde{\varphi}_{111} = O(e^{- \beta t})$. Since there is no $\rho_i$ between $\widetilde{\rho}_{n_1 + 1}$ and $\beta$, it follows from $\widetilde{\varphi}_{111} = O(e^{ - \widetilde{\rho}_{n_1 + 1} t })$ and Lemma \ref{lem:HanA8} that $\widetilde{\varphi}_{111} = O(e^{- \beta t})$. That is,
$$
| \varphi_1 - \widetilde{\eta}_{11} - \widetilde{\eta}_{111} | \leq C e^{- \beta t}.
$$

If $\widetilde{\rho}_m \geq 4 \rho_1$, we could proceed as at the beginning of Step 2 by taking the largest integer $n_2$ such that $\widetilde{\rho}_{n_2} < 4 \rho_1$. After finitely many steps, we obtain the desired function $\widetilde{\eta}_1$ as \eqref{eq:teta1m}, and thus
$$
| \varphi_1 - \widetilde{\eta}_1 | \leq C e^{- \beta t}.
$$

{\bf Case 5:} $\widetilde{\rho}_{l_1} < \beta < \rho_{k_1 + 1}$. From Case 4, there exists an $\widetilde{\eta}_1$ having the form of \eqref{eq:teta1m} (replacing $m$ by $l_1$) such that
$$
| \varphi_1 - \widetilde{\eta}_1 | \leq C e^{- \beta t}.
$$
Denote
$$
\widetilde{\eta}_1 (t, \theta) := \sum_{i = 1}^{l_1} \sum_{j = 0}^{ \widetilde{M}_i } \widetilde{c}_{ij} (t) X_j (\theta) e^{ - \widetilde{\rho}_i t },
$$
and
$$
\widetilde{\varphi}_1 (t, \theta) := \varphi_1 (t, \theta) - \widetilde{\eta}_1 (t, \theta).
$$
By \eqref{eq:Lvphi1} and argument in Case 4, we have
\begin{equation}\label{eq:Ltvphi1}
L \widetilde{\varphi}_1 = F(\varphi) - L \widetilde{\eta}_1 + O(e^{- \beta t})
\end{equation}
and
\begin{equation}\label{eq:F-Lteta1}
F(\varphi) - L \widetilde{\eta}_1 = O(e^{ - \widetilde{\rho}_{l_1 + 1} t }).
\end{equation}

{\bf Case 6:} $\beta = \rho_{k_1 + 1}$. We are in a similar situation as in Case 1. By \eqref{eq:Ltvphi1}, \eqref{eq:F-Lteta1} and Lemma \ref{lem:HanA8}, we obtain
$$
| \widetilde{\varphi}_1 | \leq C t e^{- \beta t}.
$$

{\bf Case 7:} $\rho_m < \beta \leq \rho_{m + 1}$ for some $m \in \{ k_1 + 1, k_1 + 2, \dots, k_2 - 1 \}$. This situation is similar to Case 2. By \eqref{eq:Ltvphi1}, \eqref{eq:F-Lteta1} and Lemma \ref{lem:HanA8}, we get
$$
\bigg| \widetilde{\varphi}_1 (t, \theta) - \sum_{i = k_1 + 1}^m c_i p_i^+ (t) X_i (\theta) e^{- \rho_i t} \bigg| \leq
\bigg\{
\aligned
& C e^{- \beta t} ~~~~~~ & \textmd{if} & ~ \rho_m < \beta < \rho_{m + 1}, \\
& C t e^{- \beta t} ~~~~~~ & \textmd{if} & ~ \beta = \rho_{m + 1},
\endaligned
$$
with some constants $c_i$ for $i = k_1 + 1, \dots, m$.

{\bf Case 8:} $\rho_{k_2} < \beta \leq \widetilde{\rho}_{l_1 + 1}$. Now we are in a similar situation as in Case 3. As above we have
$$
\bigg| \widetilde{\varphi}_1 (t, \theta) - \sum_{i = k_1 + 1}^{k_2} c_i p_i^+ (t) X_i (\theta) e^{- \rho_i t} \bigg| \leq C e^{- \beta t},
$$
with some constants $c_i$ for $i = k_1 + 1, \dots, k_2$.

Denote
$$
\eta_2 (t, \theta) := \sum_{i = k_1 + 1}^{k_2} c_i p_i^+ (t) X_i (\theta) e^{- \rho_i t},
$$
and
$$
\varphi_2 (t, \theta) := \widetilde{\varphi}_1 (t, \theta) - \eta_2 (t, \theta).
$$
Then we have $L \eta_2 = 0$, and by \eqref{eq:Ltvphi1},
\begin{equation}\label{eq:Lvphi2}
L \varphi_2 = F(\varphi) - L \widetilde{\eta}_1 + O(e^{- \beta t}).
\end{equation}

{\bf Case 9:} $\widetilde{\rho}_m < \beta \leq \widetilde{\rho}_{m + 1}$ for some $m \in \{ l_1 + 1, \dots, l_2 - 1 \}$. The argument is similar to that of Case 4. For some $\widetilde{\eta}_2$ to be determined, set $\widetilde{\varphi}_2 = \varphi_2 - \widetilde{\eta}_2$. Then by \eqref{eq:Lvphi2},
$$
L \widetilde{\varphi}_2 = F(\varphi) - L \widetilde{\eta}_1 - L \widetilde{\eta}_2 + O(e^{- \beta t}).
$$
We write
$$
F(\varphi) = \sum_{i = 2}^{[\beta]} a_i (\eta_1 + \widetilde{\eta}_1 + \eta_2 + \varphi_2)^i + O(e^{- \beta t}).
$$
Recall that, in Case 4 and Case 5 we use $L \widetilde{\eta}_1$ to cancel the terms with decay $e^{ - \widetilde{\rho}_i t }$ in the expansion of $F(\varphi)$ for $i = 1, \dots, l_1$. Proceeding similarly, since $\mathcal{I}_1 \cap \mathcal{I}_2 = \varnothing$, we can find an $\widetilde{\eta}_2$ having the form of
$$
\widetilde{\eta}_2 (t, \theta) = \sum_{i = l_1 + 1}^m \sum_{j = 0}^{ \widetilde{M}_i } \widetilde{c}_{ij} (t) X_j (\theta) e^{ - \widetilde{\rho}_i t }
$$
to cancel the terms with decay $e^{ - \widetilde{\rho}_i t }$ in the expansion of $F(\varphi)$ for $i = l_1 + 1, \dots, m$, where $\widetilde{c}_{ij}$ is a smooth periodic function with the same period as $\xi$. Then we conclude that $L \widetilde{\varphi}_2 = O(e^{- \beta t})$. Since there is no $\rho_i$ between $\widetilde{\rho}_{l_1 + 1}$ and $\beta$, it follows from $\widetilde{\varphi}_2 = O(e^{ - \widetilde{\rho}_{l_1 + 1} t })$ and Lemma \ref{lem:HanA8} that
$$
| \varphi_2 - \widetilde{\eta}_2 | = | \widetilde{\varphi}_2 | \leq C e^{- \beta t}.
$$

Since $\beta$ is finite, repeating the similar arguments as above, we obtain \eqref{eq:csc-expans} for the case $\mathcal{I}_1 \cap \mathcal{I}_2 = \varnothing$ by denoting the index set $\mathcal{I} [\xi] = \{ \mu_i \}_{i \geq 1}$, which is a strictly increasing sequence of positive constants.

Next we discuss the general case, i.e., $\mathcal{I}_1 \cap \mathcal{I}_2 \neq \varnothing$. For an illustration, we consider $\rho_{k_1} = \widetilde{\rho}_1$ instead of the strict inequality, which is the smallest element in $\mathcal{I}_1 \cap \mathcal{I}_2$. Let $k_* \in \{ 1, \dots, k_1 - 1 \}$ such that
$$
\rho_{k_*} < \rho_{k_* + 1} = \cdots = \rho_{k_1} = \widetilde{\rho}_1.
$$
We will modify the previous discussion to deal with this situation. If $\beta < \rho_{k_* + 1}$, the discussion is the same as in Case 1 and Case 2. If $\beta = \rho_{k_* + 1} = \widetilde{\rho}_1$, we still have
$$
| \varphi - \eta_1 | \leq C t e^{- \beta t},
$$
where
\begin{equation}\label{eq:eta1mod}
\eta_1 (t, \theta) = \sum_{i = 1}^{k_*} c_i p_i^+ (t) X_i (\theta) e^{- \rho_i t}.
\end{equation}
Suppose $\widetilde{\rho}_m < \beta \leq \widetilde{\rho}_{m + 1} \leq 3 \rho_1$ for some $m \in \{ 1, \dots, l_1 - 1 \}$. Let $\eta_1$ be defined as in \eqref{eq:eta1mod} and $\varphi_1 = \varphi - \eta_1$. Take $\widetilde{\eta}_1$ to be determined later and define $\widetilde{\varphi}_1 = \varphi_1 - \widetilde{\eta}_1$. By \eqref{eq:Ltvphi1m}-\eqref{eq:sumeta1} we have
$$
L \widetilde{\varphi}_1 = - L \widetilde{\eta}_1 + \sum_{i = 1}^m \sum_{j = 0}^{ \widetilde{M}_i } a_{ij} (t) X_j (\theta) e^{ - \widetilde{\rho}_i t } + O(e^{- \beta t}).
$$
When $\widetilde{\rho}_i = \rho_j \in \mathcal{I}_1 \cap \mathcal{I}_2$ for $1 \leq i \leq m$ and $0 \leq j \leq \widetilde{M}_i$, by \cite[Lemma A.2 and Remark A.5]{HLL}, instead of \eqref{eq:Ljcijrhoi} there exist two smooth periodic functions $\widetilde{b}_{ij}$ and $\widetilde{d}_{ij}$ such that
$$
L_j ( \widetilde{b}_{ij} (t) e^{ - \widetilde{\rho}_i t } + \widetilde{d}_{ij} (t) t e^{ - \widetilde{\rho}_i t } ) = a_{ij} (t) e^{ - \widetilde{\rho}_i t }.
$$
Hence, we replace those terms $\widetilde{c}_{ij} (t)$ in the summation \eqref{eq:teta1m} by $\widetilde{b}_{ij} (t) + \widetilde{d}_{ij} (t) t$ to define the new $\widetilde{\eta}_1$ when $\widetilde{\rho}_i = \rho_j \in \mathcal{I}_1 \cap \mathcal{I}_2$ for $1 \leq i \leq m$ and $0 \leq j \leq \widetilde{M}_i$. Then we get $L \widetilde{\varphi}_1 = O(e^{- \beta t})$, and thus
$$
| \varphi_1 - \widetilde{\eta}_1 | = | \widetilde{\varphi}_1 | \leq
\bigg\{
\aligned
& C t e^{- \beta t} ~~~~~~ && \textmd{if} ~ \beta = \widetilde{\rho}_{m + 1} ~ \textmd{and} ~ \widetilde{\rho}_{m + 1} \in \mathcal{I}_1 \cap \mathcal{I}_2, \\
& C e^{- \beta t} ~~~~~~ && \textmd{otherwise}.
\endaligned
$$

In the rest of the proof, when $\widetilde{\rho}_i = \rho_j \in \mathcal{I}_1 \cap \mathcal{I}_2$, an extra power of $t$ appears when solving $L_j ( \widetilde{c}_{ij} (t) e^{ - \widetilde{\rho}_i t } ) = a_{ij} (t) e^{ - \widetilde{\rho}_i t }$ according to \cite[Lemma A.2 and Remark A.5]{HLL}. Such a power of $t$ will generate more powers of $t$ upon iteration. The worst situation is that $\widetilde{\rho}_i \in \mathcal{I}_1 \cap \mathcal{I}_2$ for every $i \geq 1$. Therefore, when $\beta = \mu_{m + 1}$, we have at most $m$-th power of $t$ on the right hand side of \eqref{eq:csc-expans}. The proof of Theorem \ref{thm:csc-expans} is completed.
\end{proof}

Before proving Theorem \ref{thm:csc-expans-2}, we introduce a family of solutions of \eqref{eq:v} with $K(x) \equiv K(0)$. Let $\xi$ be a positive periodic solution of \eqref{eq:xi}. Then $u_0 (|x|) = |x|^{ - \frac{n - 2}{2} } \xi (- \ln |x|)$ solves \eqref{eq:u0}. For $a \in \R^n$, by the conformal invariance we know that
$$
\aligned
u_a (x) & = \bigg( \frac{1}{|x|} \bigg)^{n - 2} \bigg| \frac{x}{|x|^2} - a \bigg|^{ - \frac{n - 2}{2} } \xi \bigg( \ln \bigg| \frac{x}{|x|^2} - a \bigg| \bigg) \\
& = |x|^{ - \frac{n - 2}{2} } | \theta - a |x| |^{ - \frac{n - 2}{2} } \xi ( - \ln |x| + \ln | \theta - a |x| | )
\endaligned
$$
still solves the equation in \eqref{eq:u0}, where $\theta = x/|x|$. Define the associated function
$$
\xi_a (t, \theta) = | \theta - e^{- t} a |^{ - \frac{n - 2}{2} } \xi ( t + \ln | \theta - e^{- t} a | ) ~~~~~~ \textmd{for} ~ (t, \theta) \in (\ln |a|, \infty) \times \Sp^{n - 1}.
$$
Then $\xi_a$ satisfies \eqref{eq:v} with $K(x) \equiv K(0)$. We would expand $\xi_a$ to the second order. Note that
$$
\aligned
| \theta - e^{- t} a |^{ - \frac{n - 2}{2} } = &~ 1 + \frac{n - 2}{2} e^{- t} \langle a, \theta \rangle + \frac{(n + 2) (n - 2)}{8} e^{- 2 t} \langle a, \theta \rangle^2 \\
&~ - \frac{n - 2}{4} e^{- 2 t} |a|^2 + O(e^{- 3 t}),
\endaligned
$$
where $\langle a, \theta \rangle := a_1 \theta_1 + \cdots + a_n \theta_n$. Similarly we have
$$
\ln | \theta - e^{- t} a | = - e^{- t} \langle a, \theta \rangle - e^{- 2 t} \langle a, \theta \rangle^2 + \frac{1}{2} e^{- 2 t} |a|^2 + O(e^{- 3 t}),
$$
and so
$$
\aligned
\xi ( t + \ln | \theta - e^{- t} a | ) = &~ \xi (t) + \xi' (t) \bigg( - e^{- t} \langle a, \theta \rangle - e^{- 2 t} \langle a, \theta \rangle^2 + \frac{1}{2} e^{- 2 t} |a|^2 \bigg) \\
&~ + \frac{1}{2} \xi'' (t) e^{- 2 t} \langle a, \theta \rangle^2 + O(e^{- 3 t}) \\
= &~ \xi (t) + \xi' (t) \bigg( - e^{- t} \langle a, \theta \rangle - e^{- 2 t} \langle a, \theta \rangle^2 + \frac{1}{2} e^{- 2 t} |a|^2 \bigg) \\
&~ + \frac{1}{2} \bigg( \frac{(n - 2)^2}{4} \xi (t) - K(0) \xi (t)^\frac{n + 2}{n - 2} \bigg) e^{- 2 t} \langle a, \theta \rangle^2 + O(e^{- 3 t}),
\endaligned
$$
where we used the fact that $\xi$ is a solution of \eqref{eq:xi}. Putting these altogether we obtain
\begin{equation}\label{eq:xia}
\aligned
\xi_a (t, \theta) = &~ \xi (t) + e^{- t} \bigg( \frac{n - 2}{2} \xi (t) - \xi' (t) \bigg) \langle a, \theta \rangle \\
&~ + e^{- 2 t} \bigg( \frac{n (n - 2)}{4} \xi (t) - \frac{n}{2} \xi' (t) - \frac{1}{2} K(0) \xi (t)^\frac{n + 2}{n - 2} \bigg) \langle a, \theta \rangle^2 \\
&~ + e^{- 2 t} \bigg( - \frac{n - 2}{4} \xi (t) + \frac{1}{2} \xi' (t) \bigg) |a|^2 + O(e^{- 3 t}) \\
= &~ \xi (t) + e^{- t} \bigg( \frac{n - 2}{2} \xi (t) - \xi' (t) \bigg) \langle a, \theta \rangle \\
&~ + e^{- 2 t} \bigg( - \frac{1}{2} K(0) \xi (t)^\frac{n + 2}{n - 2} \bigg) \langle a, \theta \rangle^2 \\
&~ + e^{- 2 t} \bigg( - \frac{n - 2}{8} \xi (t) + \frac{1}{4} \xi' (t) \bigg) \Delta_\theta (\langle a, \theta \rangle^2) + O(e^{- 3 t}).
\endaligned
\end{equation}
Here we used the fact $2 |a|^2 = 2 n \langle a, \theta \rangle^2 + \Delta_\theta (\langle a, \theta \rangle^2)$ in the last equality. 
Now we show the exact formula for the term with decay $e^{- 2 t}$ in the asymptotic expansion \eqref{eq:csc-expans} when $n \geq 6$.

\begin{proof}[Proof of Theorem \ref{thm:csc-expans-2}] Since $n \geq 6$, by Lemma \ref{lem:rhoi2} we know $\rho_{n + 1} > 2$. As in Case 4 in the proof of Theorem \ref{thm:csc-expans}, let $\varphi = v - \xi$ and
$$
\xi_1 (t, \theta) = e^{- t} \bigg( \frac{n - 2}{2} \xi (t) - \xi' (t) \bigg) Y(\theta),
$$
where $Y = Y(\theta)$ is a spherical harmonic of degree $1$ such that $\varphi - \xi_1 = O(e^{- 2 t})$. Denote $\varphi_1 = \varphi - \xi_1$. Take some function $\xi_2$ to be determined later, and set $\widetilde{\varphi}_1 = \varphi_1 - \xi_2$. Then
$$
\aligned
L \widetilde{\varphi}_1 & = K (\xi + \varphi)^\frac{n + 2}{n - 2} - K(0) \xi^\frac{n + 2}{n - 2} - \frac{n + 2}{n - 2} K(0) \xi^\frac{4}{n - 2} \varphi - L \xi_2 \\
& = \frac{2 (n + 2)}{(n - 2)^2} K(0) \xi^\frac{6 - n}{n - 2} \varphi^2 - L \xi_2 + O(e^{- \gamma t}) \\
& = \frac{2 (n + 2)}{(n - 2)^2} K(0) \xi^\frac{6 - n}{n - 2}\xi_1^2 - L \xi_2 + O(e^{- \gamma t}) \\
& = \frac{2 (n + 2)}{(n - 2)^2} K(0) \xi^\frac{6 - n}{n - 2} \bigg( \frac{n - 2}{2} \xi - \xi' \bigg)^2 Y^2 e^{- 2 t} - L \xi_2 + O(e^{- \gamma t}),
\endaligned
$$
where $2 < \gamma < \min \{ 3, \rho_{n + 1}, \beta \}$. By the asymptotic expansion of $\xi_a$ in \eqref{eq:xia}, we take
$$
\xi_2 (t, \theta) = e^{- 2 t} \bigg[ - \frac{1}{2} K(0) \xi (t)^\frac{n + 2}{n - 2} Y(\theta)^2 + \bigg( - \frac{n - 2}{8} \xi (t) + \frac{1}{4} \xi' (t) \bigg) \Delta_\theta (Y^2) \bigg].
$$
One can check that $\xi_2$ satisfies
$$
L \xi_2 = \frac{2 (n + 2)}{(n - 2)^2} K(0) \xi^\frac{6 - n}{n - 2} \bigg( \frac{n - 2}{2} \xi - \xi' \bigg)^2 Y^2 e^{- 2 t}.
$$
Therefore, $L \widetilde{\varphi}_1 = O(e^{- \gamma t})$. Note that $\widetilde{\varphi}_1 = O(e^{- 2 t})$ and there is no $\rho_i$ between $2$ and $\gamma$. It follows from Lemma \ref{lem:HanA8} that we have $\widetilde{\varphi}_1 = O(e^{- \gamma t})$. The proof of Theorem \ref{thm:csc-expans-2} is completed.
\end{proof}

\begin{proof}[Proof of Theorem \ref{thm:csc}] This is a direct consequence of Theorem \ref{thm:csc-expans}, \cite[Theorem 1.2]{CL99a} and \cite[Theorem 2]{TZ06}.
\end{proof}

\subsection{Existence}

Before our proof, we introduce a weighted H{\" o}lder space in $[t_0, \infty) \times \Sp^{n - 1}$.

\begin{definition} For $t_0 > 0$, $k \in \N$, $\alpha \in (0, 1)$ and $\gamma \in \R$, set
$$
\| f \|_{ C_\gamma^k ([t_0, \infty) \times \Sp^{n - 1}) } = \sum_{j = 0}^k \sup_{ (t, \theta) \in [t_0, \infty) \times \Sp^{n - 1} } e^{\gamma t} | \nabla^j f(t, \theta) |,
$$
and
$$
\| f \|_{ C_\gamma^{k, \alpha} ([t_0, \infty) \times \Sp^{n - 1}) } = \| f \|_{ C_\gamma^k ([t_0, \infty) \times \Sp^{n - 1}) } + \sup_{ t \geq t_0 + 1 } e^{\gamma t} [ \nabla^k f ]_{ C^\alpha ([t - 1, t + 1] \times \Sp^{n - 1}) },
$$
where $[\cdot]_{C^\alpha}$ is the usual H{\" o}lder semi-norm.
\end{definition}

\begin{proof}[Proof of Theorem \ref{thm:csc-existv1}] Without loss of generality, we assume that $K(0) = 1$. Since $| \nabla K(x) | = O(|x|^{\beta - 1})$, we have $K(x) = 1 + O(|x|^\beta)$ near the origin. This implies that
$$
K(e^{- t} \theta) - 1 \in C_\beta^1 ([1, \infty) \times \Sp^{n - 1}).
$$
Suppose that $\widehat{v} \in C^{2, \alpha} (\R_+ \times \Sp^{n - 1})$ satisfies \eqref{eq:hatv-xi} and \eqref{eq:Mhatv}. Our target is to find $\varphi \in C_\beta^{2, \alpha} ([t_0, \infty) \times \Sp^{n - 1})$ such that
$$
\mathcal{M} (\widehat{v} + \varphi) = 0,
$$
where the operator $\mathcal{M}$ is defined as in \eqref{eq:csc-M}. It is equivalent to
\begin{equation}\label{24=Lpoi}
L \varphi = - \mathcal{M} (\widehat{v}) + P(\varphi),
\end{equation}
where the operator $P$ is given by
$$
P(\varphi) := K (\widehat{v} + \varphi)^\frac{n + 2}{n - 2} - K \widehat{v}^\frac{n + 2}{n - 2} - \frac{n + 2}{n - 2} \xi^\frac{4}{n - 2} \varphi.
$$
With the operator $L^{- 1}$ introduced in \cite[Remark 2.10]{HL}, we can rewrite \eqref{24=Lpoi} further as
$$
\varphi = L^{- 1} [ - \mathcal{M} (\widehat{v}) + P(\varphi) ].
$$
Define the mapping $T$ as
$$
T(\varphi) = L^{- 1} [ - \mathcal{M} (\widehat{v}) + P(\varphi) ].
$$
We claim that $T$ is a contraction on a closed ball in $C_\beta^{2, \alpha} ([t_0, \infty) \times \Sp^{n - 1})$ for some $t_0$ large. Set
$$
\mathcal{B}_{t_0, R} := \Big\{ \varphi \in C_\beta^{2, \alpha} ([t_0, \infty) \times \Sp^{n - 1}) : \| \varphi \|_{ C_\beta^{2, \alpha} ([t_0, \infty) \times \Sp^{n - 1}) } \leq R \Big\}.
$$

First, we prove that $T$ maps $\mathcal{B}_{t_0, R}$ to itself for some fixed $R > 0$ and any $t_0$ sufficiently large. By \eqref{eq:hatv-xi}, there exists $t_1 \geq 1$ such that for any $(t, \theta) \in [t_1, \infty) \times \Sp^{n - 1}$,
$$
\frac{1}{2} \min\nolimits_\R \xi \leq \widehat{v} (t, \theta) \leq \frac{3}{2} \max\nolimits_\R \xi.
$$
Let $t_0 \geq t_R + 1$ with
$$
t_R := \max \bigg\{ t_1, \frac{1}{\beta} \ln \bigg( \frac{4 R}{ \min_\R \xi } \bigg) \bigg\}.
$$
By the definition of $t_R$, one can easily check that $T$ is well-defined on $\mathcal{B}_{t_0, R}$. Now we estimate $- \mathcal{M} (\widehat{v}) + P(\varphi)$. By \eqref{eq:Mhatv} we have
$$
\| \mathcal{M} (\widehat{v}) \|_{ C_\beta^1 ([t_0, \infty) \times \Sp^{n - 1}) } \leq C_1.
$$
For any $\varphi \in \mathcal{B}_{t_0, R}$ for some $R$ to be determined, set
\begin{equation}\label{eq:Qvarphi-int}
Q(\varphi) = \frac{n + 2}{n - 2} \int_0^1 \Big[ (\widehat{v} + s \varphi)^\frac{4}{n - 2} - \xi^\frac{4}{n - 2} \Big] {\rm d} s.
\end{equation}
Then
$$
P(\varphi) = \varphi Q(\varphi) + (K - 1) \Big[ (\widehat{v} + \varphi)^\frac{n + 2}{n - 2} - \widehat{v}^\frac{n + 2}{n - 2} \Big].
$$
Note that by \eqref{eq:hatv-xi},
$$
| (\widehat{v} - \xi) | + | \nabla (\widehat{v} - \xi) | \leq \epsilon (t),
$$
where $\epsilon (t)$ is a decreasing function with $\epsilon (t) \to 0$ as $t \to \infty$, and
$$
| \varphi | + | \nabla \varphi | \leq R e^{- \beta t}.
$$
Then for $t \geq t_0$,
\begin{equation}\label{eq:Qvarphi-esti}
| Q(\varphi) | + | \nabla Q(\varphi) | \leq C_2 (\epsilon (t) + R e^{- \beta t}).
\end{equation}
Hence
$$
\| \varphi Q(\varphi) \|_{ C_\beta^1 ([t_0, \infty) \times \Sp^{n - 1}) } \leq C_2 (\epsilon (t_0) + R e^{- \beta t_0}) R.
$$
Since
$$
(\widehat{v} + \varphi)^\frac{n + 2}{n - 2} - \widehat{v}^\frac{n + 2}{n - 2} = \varphi \cdot \frac{n + 2}{n - 2} \int_0^1 (\widehat{v} + s \varphi)^\frac{4}{n - 2} {\rm d} s
$$
and $K(e^{- t} \theta) - 1 \in C_\beta^1 ([1, \infty) \times \Sp^{n - 1})$, we obtain
$$
\bigg\| (K - 1) \Big[ (\widehat{v} + \varphi)^\frac{n + 2}{n - 2} - \widehat{v}^\frac{n + 2}{n - 2} \Big] \bigg\|_{ C_\beta^1 ([t_0, \infty) \times \Sp^{n - 1}) } \leq C_3 R e^{- \beta t_0}.
$$
Eventually, we have
$$
\| P(\varphi) \|_{ C_\beta^1 ([t_0, \infty) \times \Sp^{n - 1}) } \leq C_2 (\epsilon (t_0) + R e^{- \beta t_0}) R + C_3 R e^{- \beta t_0}.
$$
By \cite[Theorem 2.9]{HL}, we get
$$
\| T(\varphi) \|_{ C_\beta^{2, \alpha} ([t_0, \infty) \times \Sp^{n - 1}) } \leq C [ C_1 + ( C_2 \epsilon (t_0) + C_2 R e^{- \beta t_0} + C_3 e^{- \beta t_0} ) R ],
$$
where $C$, $C_1$, $C_2$ and $C_3$ are positive constants independent of $t_0$ and $R$. Take $R > 2 C C_1$ and then take $t_0$ large such that $C ( C_2 \epsilon (t_0) + C_2 R e^{- \beta t_0} + C_3 e^{- \beta t_0} ) \leq 1/2$. Then
$$
\| T(\varphi) \|_{ C_\beta^{2, \alpha} ([t_0, \infty) \times \Sp^{n - 1}) } \leq R.
$$
This shows that $T$ maps $\mathcal{B}_{t_0, R}$ to itself.

Second, we verify that $T$ is a contraction mapping for $t_0$ sufficiently large. For any $\varphi_1, \varphi_2 \in \mathcal{B}_{t_0, R}$, we have
$$
T(\varphi_1) - T(\varphi_2) = L^{- 1} [ P(\varphi_1) - P(\varphi_2) ]
$$
and
$$
P(\varphi_1) - P(\varphi_2) = \varphi_1 Q(\varphi_1) - \varphi_2 Q(\varphi_2) + (K - 1) \Big[ (\widehat{v} + \varphi_1)^\frac{n + 2}{n - 2} - (\widehat{v} + \varphi_2)^\frac{n + 2}{n - 2} \Big].
$$
Notice that
$$
\varphi_1 Q(\varphi_1) - \varphi_2 Q(\varphi_2) = (\varphi_1 - \varphi_2) Q(\varphi_1) + \varphi_2 ( Q(\varphi_1) - Q(\varphi_2) ).
$$
By \eqref{eq:Qvarphi-int}, we have
$$
Q(\varphi_1) - Q(\varphi_2) = \frac{n + 2}{n - 2} \int_0^1 \Big[ (\widehat{v} + s \varphi_1)^\frac{4}{n - 2} - (\widehat{v} + s \varphi_2)^\frac{4}{n - 2} \Big] {\rm d} s.
$$
Then
$$
| Q(\varphi_1) - Q(\varphi_2) | + | \nabla ( Q(\varphi_1) - Q(\varphi_2) ) | \leq C ( | \varphi_1 - \varphi_2 | + | \nabla ( \varphi_1 - \varphi_2 ) | ).
$$
By \eqref{eq:Qvarphi-esti}, we obtain that for any $t \geq t_0$,
$$
| \varphi_1 Q(\varphi_1) - \varphi_2 Q(\varphi_2) | + | \nabla ( \varphi_1 Q(\varphi_1) - \varphi_2 Q(\varphi_2) ) | \leq C (\epsilon (t) + R e^{- \beta t}) ( | \varphi_1 - \varphi_2 | + | \nabla ( \varphi_1 - \varphi_2 ) | ),
$$
and hence
$$
\| \varphi_1 Q(\varphi_1) - \varphi_2 Q(\varphi_2) \|_{ C_\beta^1 ([t_0, \infty) \times \Sp^{n - 1}) } \leq C (\epsilon (t_0) + R e^{- \beta t_0}) \| \varphi_1 - \varphi_2 \|_{ C_\beta^1 ([t_0, \infty) \times \Sp^{n - 1}) }.
$$
We also have
$$
\bigg\| (K - 1) \Big[ (\widehat{v} + \varphi_1)^\frac{n + 2}{n - 2} - (\widehat{v} + \varphi_2)^\frac{n + 2}{n - 2} \Big] \bigg\|_{ C_\beta^1 ([t_0, \infty) \times \Sp^{n - 1}) } \leq C e^{- \beta t_0} \| \varphi_1 - \varphi_2 \|_{ C_\beta^1 ([t_0, \infty) \times \Sp^{n - 1}) }.
$$
Therefore, from \cite[Theorem 2.9]{HL} we get
$$
\aligned
\| T(\varphi_1) - T(\varphi_2) \|_{ C_\beta^{2, \alpha} ([t_0, \infty) \times \Sp^{n - 1}) } & \leq C (\epsilon (t_0) + R e^{- \beta t_0} + e^{- \beta t_0}) \| \varphi_1 - \varphi_2 \|_{ C_\beta^1 ([t_0, \infty) \times \Sp^{n - 1}) } \\
& \leq C (\epsilon (t_0) + R e^{- \beta t_0} + e^{- \beta t_0}) \| \varphi_1 - \varphi_2 \|_{ C_\beta^{2, \alpha} ([t_0, \infty) \times \Sp^{n - 1}) }.
\endaligned
$$
Thus, $T$ is a contraction mapping when $t_0$ is sufficiently large.

By the contraction mapping principle, there exists $\varphi \in C_\beta^{2, \alpha} ([t_0, \infty) \times \Sp^{n - 1})$ such that $T \varphi = \varphi$. Hence $v = \widehat{v} + \varphi$ is a solution of \eqref{eq:v} in $(t_0, \infty) \times \Sp^{n - 1}$. The proof of Theorem \ref{thm:csc-existv1} is finished.
\end{proof}

\begin{proof}[Proof of Theorem \ref{thm:csc-existv2}] Without loss of generality, we assume that $K(0) = 1$. Since $| \nabla K(x) | = O(|x|^{\beta - 1})$, we have $K(x) = 1 + O(|x|^\beta)$ near the origin. This implies that $K(e^{- t} \theta) - 1 \in C_\beta^1 ([1, \infty) \times \Sp^{n - 1})$. Our aim is to construct $\varphi \in C_\beta^{2, \alpha} ([t_0, \infty) \times \Sp^{n - 1})$ such that
$$
\mathcal{M} (\xi + \varphi) = 0,
$$
where the operator $\mathcal{M}$ is defined as in \eqref{eq:csc-M}. This is equivalent to solving
\begin{equation}\label{24=09nh7}
L \varphi = (K - 1) (\xi + \varphi)^\frac{n + 2}{n - 2} + F(\varphi),
\end{equation}
where the operator $F$ is given by
$$
F(\varphi) := (\xi + \varphi)^\frac{n + 2}{n - 2} - \xi^\frac{n + 2}{n - 2} - \frac{n + 2}{n - 2} \xi^\frac{4}{n - 2} \varphi.
$$

We first consider the case that $\beta > 1$ and $\beta \notin \{ \rho_i \}_{i \geq 1}$. In this case, with the operator $L^{- 1}$ introduced in \cite[Remark 2.10]{HL}, we can rewrite \eqref{24=09nh7} as
$$
\varphi = L^{- 1} \Big[ (K - 1) (\xi + \varphi)^\frac{n + 2}{n - 2} + F(\varphi) \Big].
$$
Define the mapping $T$ as
$$
T(\varphi) = L^{- 1} \Big[ (K - 1) (\xi + \varphi)^\frac{n + 2}{n - 2} + F(\varphi) \Big].
$$
We claim that $T$ is a contraction on some closed ball in $C_\beta^{2, \alpha} ([t_0, \infty) \times \Sp^{n - 1})$ for some $t_0$ large. Set
$$
\mathcal{B}_{t_0, R} := \Big\{ \varphi \in C_\beta^{2, \alpha} ([t_0, \infty) \times \Sp^{n - 1}) : \| \varphi \|_{ C_\beta^{2, \alpha} ([t_0, \infty) \times \Sp^{n - 1}) } \leq R \Big\}.
$$
We now show that $T$ maps $\mathcal{B}_{t_0, R}$ to itself for some fixed $R > 0$ and any $t_0$ sufficiently large. Let $t_0 \geq t_R + 1$ with
$$
t_R := \max \bigg\{ 1, \frac{1}{\beta} \ln \bigg( \frac{2 R}{ \min_\R \xi } \bigg) \bigg\}.
$$
By the definition of $t_R$, one can easily check that $T$ is well-defined on $\mathcal{B}_{t_0, R}$. We estimate $(K - 1) (\xi + \varphi)^\frac{n + 2}{n - 2} + F(\varphi)$. Since $K(e^{- t} \theta) - 1 \in C_\beta^1 ([1, \infty) \times \Sp^{n - 1})$, we have that for any $\varphi \in \mathcal{B}_{t_0, R}$ with $R$ to be determined,
$$
\Big\| (K - 1) (\xi + \varphi)^\frac{n + 2}{n - 2} \Big\|_{ C_\beta^1 ([t_0, \infty) \times \Sp^{n - 1}) } \leq C_1.
$$
Let
$$
Q(\varphi) := \frac{n + 2}{n - 2} \int_0^1 \Big[ (\xi + s \varphi)^\frac{4}{n - 2} - \xi^\frac{4}{n - 2} \Big] {\rm d} s.
$$
Then $F(\varphi) = \varphi Q(\varphi)$. Note that
$$
| \varphi | + | \nabla \varphi | \leq R e^{- \beta t}.
$$
Thus, for any $t \geq t_0$,
$$
| Q(\varphi) | + | \nabla Q(\varphi) | \leq C_2 R e^{- \beta t},
$$
and hence
$$
\| F (\varphi) \|_{ C_\beta^1 ([t_0, \infty) \times \Sp^{n - 1}) } = \| \varphi Q(\varphi) \|_{ C_\beta^1 ([t_0, \infty) \times \Sp^{n - 1}) } \leq C_2 R^2 e^{- \beta t_0}.
$$
By \cite[Theorem 2.9]{HL}, we get
$$
\| T(\varphi) \|_{ C_\beta^{2, \alpha} ([t_0, \infty) \times \Sp^{n - 1}) } \leq C (C_1 + C_2 R^2 e^{- \beta t_0}),
$$
where $C$, $C_1$ and $C_2$ are positive constants independent of $t_0$ and $R$. Take $R > 2 C C_1$ and then take $t_0$ large such that $C C_2 R e^{- \beta t_0} \leq 1/2$. Then
$$
\| T(\varphi) \|_{ C_\beta^{2, \alpha} ([t_0, \infty) \times \Sp^{n - 1}) } \leq R.
$$
That is, $T(\mathcal{B}_{t_0, R}) \subset \mathcal{B}_{t_0, R}$. Next we verify that $T$ is a contraction mapping for $t_0$ sufficiently large. For any $\varphi_1, \varphi_2 \in \mathcal{B}_{t_0, R}$, we have
$$
T(\varphi_1) - T(\varphi_2) = L^{- 1} \Big[ (K - 1) \Big( (\xi + \varphi_1)^\frac{n + 2}{n - 2} - (\xi + \varphi_1)^\frac{n + 2}{n - 2} \Big) + F(\varphi_1) - F(\varphi_2) \Big].
$$
Similar to the proof of Theorem \ref{thm:csc-existv1}, we have
$$
\| F(\varphi_1) - F(\varphi_2) \|_{ C_\beta^1 ([t_0, \infty) \times \Sp^{n - 1}) } \leq C R e^{- \beta t_0} \| \varphi_1 - \varphi_2 \|_{ C_\beta^1 ([t_0, \infty) \times \Sp^{n - 1}) }
$$
and
$$
\Big\| (\xi + \varphi_1)^\frac{n + 2}{n - 2} - (\xi + \varphi_2)^\frac{n + 2}{n - 2} \Big\|_{ C_\beta^1 ([t_0, \infty) \times \Sp^{n - 1}) } \leq C \| \varphi_1 - \varphi_2 \|_{ C_\beta^1 ([t_0, \infty) \times \Sp^{n - 1}) }.
$$
Therefore, using \cite[Theorem 2.9]{HL} we get
$$
\aligned
\| T(\varphi_1) - T(\varphi_2) \|_{ C_\beta^{2, \alpha} ([t_0, \infty) \times \Sp^{n - 1}) } & \leq C (R e^{- \beta t_0} + e^{- \beta t_0}) \| \varphi_1 - \varphi_2 \|_{ C_\beta^1 ([t_0, \infty) \times \Sp^{n - 1}) } \\
& \leq C (R e^{- \beta t_0} + e^{- \beta t_0}) \| \varphi_1 - \varphi_2 \|_{ C_\beta^{2, \alpha} ([t_0, \infty) \times \Sp^{n - 1}) }.
\endaligned
$$
Hence, $T$ is a contraction when $t_0$ is sufficiently large. By the contraction mapping principle, there exists $\varphi \in C_\beta^{2, \alpha} ([t_0, \infty) \times \Sp^{n - 1})$ such that $T \varphi = \varphi$. Thus, $v = \xi + \varphi$ is a solution of \eqref{eq:v} in $(t_0, \infty) \times \Sp^{n - 1}$.

Finally, we consider the case $\beta > 1$ and $\beta \in \{ \rho_i \}_{i \geq 1} =: \mathcal{I}_1$. Let $\beta' \in (\beta/2, \beta)$ such that $\mathcal{I}_1 \cap [\beta', \beta) = \varnothing$. Then $\beta' > 1$, $\beta' \notin \mathcal{I}_1$ and $| \nabla K(x) | = O(|x|^{\beta' - 1})$. From the above arguments, we know that there exists $\varphi \in C_{\beta'}^{2, \alpha} ([t_0, \infty) \times \Sp^{n - 1})$ satisfying $T \varphi = \varphi$. Therefore,
$$
L \varphi = F(\varphi) + (K - 1) (\xi + \varphi)^\frac{n + 2}{n - 2} = O(e^{- 2 \beta' t}) + O(e^{- \beta t}) = O(e^{- \beta t}).
$$
By Lemma \ref{lem:HanA8}, we obtain $| \varphi | \leq C t e^{- \beta t}$. Moreover, $v = \xi + \varphi$ is a solution of \eqref{eq:v} in $(t_0, \infty) \times \Sp^{n - 1}$. The proof of Theorem \ref{thm:csc-existv2} is completed.
\end{proof}

\section{Anisotropic elliptic equation}\label{Sec=04}

In this section, we first prove the arbitrary-order expansion in Theorem \ref{thm:ckn-expans}, where the various properties established in Appendix \ref{AppendixB} will be used. Then we show the existence result in Theorem \ref{thm:ckn-existw1} based on the contraction mapping principle.

\subsection{Asymptotic expansion}

\begin{proof}[Proof of Theorem \ref{thm:ckn-expans}] Throughout the proof, we always assume that $\zeta$ is a positive nonconstant periodic solution of \eqref{eq:zeta}. The proof is similar for the case when $\zeta$ is the positive constant solution. Define
$$
\phi (t, \theta) = w(t, \theta) - \zeta (t) ~~~~~~ \textmd{for} ~ (t, \theta) \in \R_+ \times \Sp^{n - 1}.
$$
Then by \eqref{eq:w2zeta} we have
$$
\phi (t, \theta) \to 0 ~~~~~~ \textmd{as} ~ t \to \infty ~ \textmd{uniformly for} ~ \theta \in \Sp^{n - 1}.
$$
Moreover, $\phi$ solves
\begin{equation}\label{eq:Lphi}
\mathcal{L} \phi = (\zeta + \phi)^{p - 1} - \zeta^{p - 1} - (p - 1) \zeta^{p - 2} \phi =: \mathcal{F} (\phi).
\end{equation}
We decompose the index set $\mathcal{S} [\zeta]$ into two parts. Let
$$
\mathcal{S}_1 = \{ \sigma_i : i \geq 1 \}
$$
and
$$
\mathcal{S}_2 = \bigg\{ \sum_{i = 1}^k n_i \sigma_i > 0 : n_i \in \N ~ \textmd{and} ~ \sum_{i = 1}^k n_i \geq 2 \bigg\}.
$$
Denote the set $\mathcal{S}_2$ by a strictly increasing sequence $\{ \widetilde{\sigma}_i \}_{i \geq 1}$. Note that $\widetilde{\sigma}_1 = 2 \sigma_1$. First, we prove \eqref{eq:ckn-expans} for the simple case: $\mathcal{S}_1 \cap \mathcal{S}_2 = \varnothing$. In this case, one can arrange $\mathcal{S} [\zeta]$ as follows:
$$
\aligned
0 < \sigma_1 \leq \cdots \leq \sigma_{k_1} < \widetilde{\sigma}_1 < \cdots & < \widetilde{\sigma}_{l_1} \\
& < \sigma_{k_1 + 1} \leq \cdots \leq \sigma_{k_2} < \widetilde{\sigma}_{l_1 + 1} < \cdots.
\endaligned
$$

{\bf Step 1.} By \cite[Theorem 1.2]{HLW}, we know that $\phi = O(e^{- \varepsilon t})$ for some $\varepsilon > 0$. Without loss of generality, we can assume that $\varepsilon \neq 2^{- k} \sigma_1$ for any positive integer $k$. Since $| \mathcal{F} (\phi) | \leq C \phi^2$, we have $\mathcal{L} \phi = O(e^{- 2 \varepsilon t})$. If $2 \varepsilon > \sigma_1$, by Lemma \ref{lem:sol-esti} we have that for any $(t, \theta) \in (1, \infty) \times \Sp^{n - 1}$,
\begin{equation}\label{eq:phis1}
\phi = O(e^{- \sigma_1 t}).
\end{equation}
If $2 \varepsilon < \sigma_1$, by Lemma \ref{lem:sol-esti} we get $\phi = O(e^{- 2 \varepsilon t})$. Repeating the same procedure in finite steps, we always have \eqref{eq:phis1}.

{\bf Step 2.} It follows from \eqref{eq:phis1} that $\mathcal{L} \phi = O(e^{- 2 \sigma_1 t}) = O(e^{ - \widetilde{\sigma}_1 t })$. By Lemma \ref{lem:sol-esti}, we have for any $(t, \theta) \in (1, \infty) \times \Sp^{n - 1}$,
$$
\bigg| \phi (t, \theta) - \sum_{i = 1}^{k_1} c_i q_i^+ (t) X_i (\theta) e^{- \sigma_i t} \bigg| \leq C e^{ - \widetilde{\sigma}_1 t },
$$
with some constants $c_i$ for $i = 1, \dots, k_1$.

Denote
\begin{equation}\label{eq:psi1}
\psi_1 (t, \theta) := \sum_{i = 1}^{k_1} c_i q_i^+ (t) X_i (\theta) e^{- \sigma_i t},
\end{equation}
and
$$
\phi_1 (t, \theta) := \phi (t, \theta) - \psi_1 (t, \theta).
$$
Then we have $\mathcal{L} \psi_1 = 0$, and by \eqref{eq:Lphi},
\begin{equation}\label{eq:Lphi1}
\mathcal{L} \phi_1 = \mathcal{F} (\phi).
\end{equation}
For each $\widetilde{\sigma}_i \in \mathcal{S}_2$, we consider nonnegative integers $n_1, \dots, n_k$ such that
\begin{equation}\label{eq:tsigmai}
n_1 \sigma_1 + \cdots + n_k \sigma_k = \widetilde{\sigma}_i ~~~~~~ \textmd{and} ~~~~~~ n_1 + \cdots + n_k \geq 2.
\end{equation}
Notice that there are only finitely many collections of nonnegative integers $n_1, \dots, n_k$ satisfying \eqref{eq:tsigmai}. Set
$$
\widetilde{K}_i = \max \{ n_1 \deg (X_1) + \cdots + n_k \deg (X_k) : n_1, \dots, n_k \in \N ~ \textmd{satisfy \eqref{eq:tsigmai}} \},
$$
and
$$
\widetilde{M}_i = \max \{ m : \deg (X_m) \leq \widetilde{K}_i \}.
$$

{\bf Step 3.} We claim that there exists $\widetilde{\psi}_1$ having the form of
\begin{equation}\label{eq:tpsi1}
\widetilde{\psi}_1 (t, \theta) = \sum_{i = 1}^{l_1} \sum_{j = 0}^{ \widetilde{M}_i } \widetilde{c}_{ij} (t) X_j (\theta) e^{ - \widetilde{\sigma}_i t }
\end{equation}
with $\widetilde{c}_{ij}$ being a smooth periodic function such that for
\begin{equation}\label{eq:tphi1}
\widetilde{\phi}_1 := \phi_1 - \widetilde{\psi}_1,
\end{equation}
we have
\begin{equation}\label{eq:Ltphi1}
\mathcal{L} \widetilde{\phi}_1 = O(e^{ - \widetilde{\sigma}_{l_1 + 1} t }).
\end{equation}

Take some function $\widetilde{\psi}_1$ to be determined later, and define $\widetilde{\phi}_1$ as in \eqref{eq:tphi1}. By \eqref{eq:Lphi1} we get
\begin{equation}\label{eq:Ltphi1-}
\mathcal{L} \widetilde{\phi}_1 = \mathcal{L} \phi_1 - \mathcal{L} \widetilde{\psi}_1 = \mathcal{F} (\phi) - \mathcal{L} \widetilde{\psi}_1.
\end{equation}
We will construct a proper $\widetilde{\psi}_1$ such that
$$
\mathcal{F} (\phi) - \mathcal{L} \widetilde{\psi}_1 = O(e^{ - \widetilde{\sigma}_{l_1 + 1} t }).
$$
Now we expand $\mathcal{F} (\phi)$. Recall that $\phi = O(e^{- \sigma_1 t})$, $\psi_1 = O(e^{- \sigma_1 t})$ and $\phi_1 = O(e^{- 2 \sigma_1 t})$. Therefore,
\begin{equation}\label{eq:Fphi1}
\mathcal{F} (\phi) = \sum_{i = 2}^{[\widetilde{\sigma}_{l_1 + 1}]} a_i \phi^i + O(e^{ - \widetilde{\sigma}_{l_1 + 1} t }) = \sum_{i = 2}^{[\widetilde{\sigma}_{l_1 + 1}]} a_i (\psi_1 + \phi_1)^i + O(e^{ - \widetilde{\sigma}_{l_1 + 1} t }),
\end{equation}
where $a_i = a_i (t)$ is a smooth periodic function (with the same period as $\zeta$). For the lower order terms involving $\phi_1$, we have
\begin{equation}\label{eq:lotphi1}
\psi_1 \phi_1 = O(e^{- 3 \sigma_1 t}) ~~~~~~ \textmd{and} ~~~~~~ \phi_1^2 = O(e^{- 4 \sigma_1 t}).
\end{equation}
For the term involving $\psi_1$ only, by \eqref{eq:psi1} and \cite[Lemma 2.4]{HLL} we have
\begin{equation}\label{eq:sumpsi1}
\aligned
\sum_{i = 2}^{[\widetilde{\sigma}_{l_1 + 1}]} a_i \psi_1^i & = \sum_{n_1 + \cdots + n_k \geq 2}^{[\widetilde{\sigma}_{l_1 + 1}]} a_{n_1 \cdots n_k} (t) X_1^{n_1} \cdots X_k^{n_k} e^{ - (n_1 \sigma_1 + \cdots + n_k \sigma_k) t } + O(e^{ - \widetilde{\sigma}_{l_1 + 1} t }) \\
& = \sum_{i = 1}^{l_1} \sum_{j = 0}^{ \widetilde{M}_i } a_{ij} (t) X_j (\theta) e^{ - \widetilde{\sigma}_i t } + O(e^{ - \widetilde{\sigma}_{l_1 + 1} t }),
\endaligned
\end{equation}
where $a_{ij}$ is a smooth periodic function (with the same period as $\zeta$). Next we discuss it in two cases.

{\it Case 1:} $\widetilde{\sigma}_{l_1} < 3 \sigma_1$. Then we have $\widetilde{\sigma}_{l_1 + 1} \leq 3 \sigma_1$. By \eqref{eq:lotphi1},
$$
\psi_1 \phi_1 = O(e^{ - \widetilde{\sigma}_{l_1 + 1} t }) ~~~~~~ \textmd{and} ~~~~~~ \phi_1^2 = O(e^{ - \widetilde{\sigma}_{l_1 + 1} t }).
$$
This together with \eqref{eq:Ltphi1-}-\eqref{eq:sumpsi1} implies that
$$
\mathcal{L} \widetilde{\phi}_1 = - \mathcal{L} \widetilde{\psi}_1 + \sum_{i = 1}^{l_1} \sum_{j = 0}^{ \widetilde{M}_i } a_{ij} (t) X_j (\theta) e^{ - \widetilde{\sigma}_i t } + O(e^{ - \widetilde{\sigma}_{l_1 + 1} t }).
$$
Since $\mathcal{S}_1 \cap \mathcal{S}_2 = \varnothing$, by Lemma \ref{lem:sol-form} there exists a smooth periodic function $\widetilde{c}_{ij}$ such that
\begin{equation}\label{eq:Ljcijsigmai}
\mathcal{L}_j ( \widetilde{c}_{ij} (t) e^{ - \widetilde{\sigma}_i t } ) = a_{ij} (t) e^{ - \widetilde{\sigma}_i t } ~~~~~~ \textmd{for each} ~ 1 \leq i \leq l_1, 0 \leq j \leq \widetilde{M}_i,
\end{equation}
where $\mathcal{L}_j$ is defined as in \eqref{eq:ckn-Li}. Then we can take $\widetilde{\psi}_1$ to be \eqref{eq:tpsi1} and obtain
$$
\mathcal{L} \widetilde{\phi}_1 = O(e^{ - \widetilde{\sigma}_{l_1 + 1} t }).
$$

{\it Case 2:} $\widetilde{\sigma}_{l_1} \geq 3 \sigma_1$. Let $n_1$ be the largest integer satisfying $\widetilde{\sigma}_{n_1} < 3 \sigma_1$. Then $\widetilde{\sigma}_{n_1 + 1} = 3 \sigma_1 \leq \widetilde{\sigma}_{l_1}$. Now we take $\widetilde{\psi}_{11}$ to be the summation \eqref{eq:tpsi1} from $1$ to $n_1$, i.e,
$$
\widetilde{\psi}_{11} (t, \theta) = \sum_{i = 1}^{n_1} \sum_{j = 0}^{ \widetilde{M}_i } \widetilde{c}_{ij} (t) X_j (\theta) e^{ - \widetilde{\sigma}_i t }.
$$
Then
\begin{equation}\label{eq:Lpsi11}
\mathcal{L} \widetilde{\psi}_{11} = \sum_{i = 1}^{n_1} \sum_{j = 0}^{ \widetilde{M}_i } a_{ij} (t) X_j (\theta) e^{ - \widetilde{\sigma}_i t }.
\end{equation}
Let $\widetilde{\phi}_{11} = \phi_1 - \widetilde{\psi}_{11}$. By \eqref{eq:Lphi1}, \eqref{eq:Fphi1}-\eqref{eq:Lpsi11} we have
$$
\mathcal{L} \widetilde{\phi}_{11} = \mathcal{F} (\phi) - \mathcal{L} \widetilde{\psi}_{11} = O(e^{ - \widetilde{\sigma}_{n_1 + 1} t }).
$$
Since there is no $\sigma_i$ between $\widetilde{\sigma}_1$ and $\widetilde{\sigma}_{n_1 + 1}$, it follows from $\widetilde{\phi}_{11} = O(e^{ - \widetilde{\sigma}_1 t })$ and Lemma \ref{lem:sol-esti} that
$$
\widetilde{\phi}_{11} = O(e^{ - \widetilde{\sigma}_{n_1 + 1} t }).
$$

Now we are in a similar situation as at the beginning of Step 3, with $\widetilde{\sigma}_{n_1 + 1} = 3 \sigma_1$ replacing $\widetilde{\sigma}_1 = 2 \sigma_1$. If $\widetilde{\sigma}_{l_1} < 4 \sigma_1$, we proceed as in Case 1. If $\widetilde{\sigma}_{l_1} \geq 4 \sigma_1$, we proceed as at the beginning of Case 2 by taking the largest integer $n_2$ such that $\widetilde{\sigma}_{n_2} < 4 \sigma_1$. After finitely many steps, we can obtain the desired function $\widetilde{\psi}_1$ in \eqref{eq:tpsi1}, and $\widetilde{\phi}_1$ in \eqref{eq:tphi1}.

{\bf Step 4.} Now we are in a similar situation as in Step 2, with $\widetilde{\sigma}_{l_1 + 1}$ replacing $\widetilde{\sigma}_1$. By \eqref{eq:Ltphi1} and Lemma \ref{lem:sol-esti}, we get
$$
\bigg| \widetilde{\phi}_1 (t, \theta) - \sum_{i = k_1 + 1}^{k_2} c_i q_i^+ (t) X_i (\theta) e^{- \sigma_i t} \bigg| \leq C e^{ - \widetilde{\sigma}_{l_1 + 1} t },
$$
with some constants $c_i$ for $i = k_1 + 1, \dots, k_2$.

Denote
$$
\psi_2 (t, \theta) := \sum_{i = k_1 + 1}^{k_2} c_i q_i^+ (t) X_i (\theta) e^{- \sigma_i t},
$$
and
$$
\phi_2 (t, \theta) := \widetilde{\phi}_1 (t, \theta) - \psi_2 (t, \theta).
$$
Then we have $\mathcal{L} \psi_2 = 0$, and by \eqref{eq:Ltphi1-},
\begin{equation}\label{eq:Lphi2}
\mathcal{L} \phi_2 = \mathcal{F} (\phi) - \mathcal{L} \widetilde{\psi}_1.
\end{equation}

{\bf Step 5.} The argument is similar to Step 3. For some $\widetilde{\psi}_2$ to be determined, set $\widetilde{\phi}_2 = \phi_2 - \widetilde{\psi}_2$. Then by \eqref{eq:Lphi2},
$$
\mathcal{L} \widetilde{\phi}_2 = \mathcal{F} (\phi) - \mathcal{L} \widetilde{\psi}_1 - \mathcal{L} \widetilde{\psi}_2.
$$
We write
$$
\mathcal{F} (\phi) = \sum_{i = 2}^{[\widetilde{\sigma}_{l_2 + 1}]} a_i (\psi_1 + \widetilde{\psi}_1 + \psi_2 + \phi_2)^i + O(e^{- \widetilde{\sigma}_{l_2 + 1} t}).
$$
Recall that, in Step 3 we use $\mathcal{L} \widetilde{\psi}_1$ to cancel the terms with decay $e^{ - \widetilde{\sigma}_i t }$ in the expansion of $\mathcal{F} (\phi)$ for $i = 1, \dots, l_1$. Proceeding similarly, since $\mathcal{S}_1 \cap \mathcal{S}_2 = \varnothing$, we can find $\widetilde{\psi}_2$ having the form of
$$
\widetilde{\psi}_2 (t, \theta) = \sum_{i = l_1 + 1}^{l_2} \sum_{j = 0}^{ \widetilde{M}_i } \widetilde{c}_{ij} (t) X_j (\theta) e^{ - \widetilde{\sigma}_i t }
$$
to cancel the terms with decay $e^{ - \widetilde{\sigma}_i t }$ in the expansion of $\mathcal{F} (\phi)$ for $i = l_1 + 1, \dots, l_2$, where $\widetilde{c}_{ij}$ is a smooth periodic function. Then we conclude that
$$
\mathcal{L} \widetilde{\phi}_2 = O(e^{ - \widetilde{\sigma}_{l_2 + 1} t }).
$$
Repeating the similar arguments as above, we obtain \eqref{eq:ckn-expans} for the case $\mathcal{S}_1 \cap \mathcal{S}_2 = \varnothing$ by denoting the index set $\mathcal{S} [\zeta] = \{ \nu_i \}_{i \geq 1}$.

Next we show \eqref{eq:ckn-expans} the general case, i.e., $\mathcal{S}_1 \cap \mathcal{S}_2 \neq \varnothing$. For an illustration, we consider $\sigma_{k_1} = \widetilde{\sigma}_1$ instead of the strict inequality, which is the smallest element in $\mathcal{S}_1 \cap \mathcal{S}_2$. Let $k_* \in \{ 1, \dots, k_1 - 1 \}$ such that
$$
\sigma_{k_*} < \sigma_{k_* + 1} = \cdots = \sigma_{k_1} = \widetilde{\sigma}_1.
$$
We will modify the discussion above to deal with this case. Now we have
$$
| \phi - \psi_1 | \leq C t e^{ - \widetilde{\sigma}_1 t },
$$
where
\begin{equation}\label{eq:psi1mod}
\psi_1 (t, \theta) = \sum_{i = 1}^{k_*} c_i q_i^+ (t) X_i (\theta) e^{- \sigma_i t}.
\end{equation}
Suppose $\widetilde{\sigma}_{l_1} < 3 \sigma_1$. Let $\psi_1$ be defined as in \eqref{eq:psi1mod} and $\phi_1 = \phi - \psi_1$. Take some function $\widetilde{\psi}_1$ to be determined later, and define $\widetilde{\phi}_1 = \phi_1 - \widetilde{\psi}_1$. By \eqref{eq:Ltphi1-}-\eqref{eq:sumpsi1} we have
$$
\mathcal{L} \widetilde{\phi}_1 = - \mathcal{L} \widetilde{\psi}_1 + \sum_{i = 1}^{l_1} \sum_{j = 0}^{ \widetilde{M}_i } a_{ij} (t) X_j (\theta) e^{ - \widetilde{\sigma}_i t } + O(e^{ - \widetilde{\sigma}_{l_1 + 1} t }).
$$
When $\widetilde{\sigma}_i = \sigma_j \in \mathcal{S}_1 \cap \mathcal{S}_2$ for $1 \leq i \leq l_1$ and $0 \leq j \leq \widetilde{M}_i$, by Lemma \ref{lem:sol-esti}, instead of \eqref{eq:Ljcijsigmai} there exist two smooth periodic functions $\widetilde{b}_{ij}$ and $\widetilde{d}_{ij}$ such that
$$
\mathcal{L}_j ( \widetilde{b}_{ij} (t) e^{ - \widetilde{\sigma}_i t } + \widetilde{d}_{ij} (t) t e^{ - \widetilde{\sigma}_i t } ) = a_{ij} (t) e^{ - \widetilde{\sigma}_i t }.
$$
Hence, we replace those terms $\widetilde{c}_{ij} (t)$ in the summation \eqref{eq:tpsi1} by $\widetilde{b}_{ij} (t) + \widetilde{d}_{ij} (t) t$ to define the new $\widetilde{\psi}_1$ when $\widetilde{\sigma}_i = \sigma_j \in \mathcal{S}_1 \cap \mathcal{S}_2$ for $1 \leq i \leq l_1$ and $0 \leq j \leq \widetilde{M}_i$. Then we get $\mathcal{L} \widetilde{\phi}_1 = O(e^{ - \widetilde{\sigma}_{l_1 + 1} t })$, similar to the result of Step 3.

In the rest of the proof, when $\widetilde{\sigma}_i = \sigma_j \in \mathcal{S}_1 \cap \mathcal{S}_2$, an extra power of $t$ appears when solving $\mathcal{L}_j ( \widetilde{c}_{ij} (t) e^{ - \widetilde{\sigma}_i t } ) = a_{ij} (t) e^{ - \widetilde{\sigma}_i t }$ according to Lemma \ref{lem:sol-form}. Such a power of $t$ will generate more powers of $t$ upon iteration. The worst situation is that $\widetilde{\sigma}_i \in \mathcal{S}_1 \cap \mathcal{S}_2$ for every $i \geq 1$. Therefore, we have at most $m$-th power of $t$ in the front of $e^{ - \nu_{m + 1} t }$ in \eqref{eq:ckn-expans}. The proof of Theorem \ref{thm:ckn-expans} is completed.
\end{proof}

\subsection{Existence}

\begin{proof}[Proof of Theorem \ref{thm:ckn-existw1}] Our target is to find $\phi \in C_\nu^{2, \alpha} ([t_0, \infty) \times \Sp^{n - 1})$ such that
$$
\mathcal{N} (\widehat{w} + \phi) = 0,
$$
where the operator $\mathcal{N}$ is defined as \eqref{eq:ckn-N}. It is equivalent to solving
\begin{equation}\label{42=pmngy7}
\mathcal{L} \phi = - \mathcal{N} (\widehat{w}) + \mathcal{P} (\phi),
\end{equation}
where the operator $\mathcal{P}$ is given by
$$
\mathcal{P} (\phi) = (\widehat{w} + \phi)^{p - 1} - \widehat{w}^{p - 1} - (p - 2) \zeta^{p - 1} \phi.
$$
With the operator $\mathcal{L}^{- 1}$ introduced in Lemma \ref{lem:inverse}, we can rewrite \eqref{42=pmngy7} as
$$
\phi = \mathcal{L}^{- 1} [ - \mathcal{N} (\widehat{w}) + \mathcal{P} (\phi) ].
$$
Define the mapping $\mathcal{T}$ by
$$
\mathcal{T} (\phi) = \mathcal{L}^{- 1} [ - \mathcal{N} (\widehat{w}) + \mathcal{P} (\phi) ].
$$
We claim that $\mathcal{T}$ is a contraction on some closed ball in $C_\nu^{2, \alpha} ([t_0, \infty) \times \Sp^{n - 1})$ for some $t_0$ large. Set
$$
\mathcal{B}_{t_0, R} = \Big\{ \varphi \in C_\nu^{2, \alpha} ([t_0, \infty) \times \Sp^{n - 1}) : \| \varphi \|_{ C_\nu^{2, \alpha} ([t_0, \infty) \times \Sp^{n - 1}) } \leq R \Big\}.
$$

First, we prove that $\mathcal{T}$ maps $\mathcal{B}_{t_0, R}$ to itself for some fixed $R > 0$ and any $t_0$ sufficiently large. By \eqref{eq:hatw-zeta}, there exists $t_1 \geq 1$ such that for any $(t, \theta) \in [t_1, \infty) \times \Sp^{n - 1}$,
$$
\frac{1}{2} \min\nolimits_\R \zeta \leq \widehat{w} (t, \theta) \leq \frac{3}{2} \max\nolimits_\R \zeta.
$$
Let $t_0 \geq t_R + 1$ with
$$
t_R := \max \bigg\{ t_1, \frac{1}{\nu} \ln \bigg( \frac{4 R}{ \min_\R \zeta } \bigg) \bigg\}.
$$
By the definition of $t_R$, one can easily check that $\mathcal{T}$ is well-defined on $\mathcal{B}_{t_0, R}$. Now we estimate $- \mathcal{N} (\widehat{w}) + \mathcal{P} (\phi)$. By \eqref{eq:Nhatw} we have
$$
\| \mathcal{N} (\widehat{w}) \|_{ C_\nu^1 ([t_0, \infty) \times \Sp^{n - 1}) } \leq C_1.
$$
For any $\phi \in \mathcal{B}_{t_0, R}$ with some $R$ to be determined, set
\begin{equation}\label{eq:Qphi-int}
\mathcal{Q} (\phi) := (p - 1) \int_0^1 \Big[ (\widehat{w} + s \phi)^{p - 2} - \zeta^{p - 2} \Big] {\rm d} s.
\end{equation}
Then
$$
\mathcal{P} (\phi) = \phi \mathcal{Q} (\phi).
$$
Note that by \eqref{eq:hatw-zeta},
$$
| (\widehat{w} - \zeta) | + | \nabla (\widehat{w} - \zeta) | \leq \epsilon (t),
$$
where $\epsilon (t)$ is a decreasing function with $\epsilon (t) \to 0$ as $t \to \infty$, and
$$
| \phi | + | \nabla \phi | \leq R e^{- \nu t}.
$$
Then for $t \geq t_0$,
\begin{equation}\label{eq:Qphi-esti}
| \mathcal{Q} (\phi) | + | \nabla \mathcal{Q} (\phi) | \leq C_2 (\epsilon (t) + R e^{- \nu t}).
\end{equation}
Hence
$$
\| \mathcal{P} (\phi) \|_{ C_\nu^1 ([t_0, \infty) \times \Sp^{n - 1}) } \leq C_2 (\epsilon (t_0) + R e^{- \nu t_0}) R.
$$
By Lemma \ref{lem:inverse}, we get
$$
\aligned
\| \mathcal{T} (\phi) \|_{ C_\nu^{2, \alpha} ([t_0, \infty) \times \Sp^{n - 1}) } & \leq C \| - \mathcal{N} (\widehat{w}) + \mathcal{P} (\phi) \|_{ C_\nu^{0, \alpha} ([t_0, \infty) \times \Sp^{n - 1}) } \\
& \leq C [ C_1 + C_2 (\epsilon (t_0) + R e^{- \nu t_0}) R ],
\endaligned
$$
where $C$, $C_1$ and $C_2$ are positive constants independent of $t_0$ and $R$. Take $R > 2 C C_1$ and then take $t_0$ large such that $C C_2 (\epsilon (t_0) + R e^{- \nu t_0}) \leq 1/2$. Then
$$
\| \mathcal{T} (\phi) \|_{ C_\nu^{2, \alpha} ([t_0, \infty) \times \Sp^{n - 1}) } \leq R.
$$
That is, $\mathcal{T} (\mathcal{B}_{t_0, R}) \subset \mathcal{B}_{t_0, R}$.

Next, we verify that $\mathcal{T}$ is a contraction mapping for $t_0$ sufficiently large. For any $\phi_1, \phi_2 \in \mathcal{B}_{t_0, R}$, we have
$$
\mathcal{T} (\phi_1) - \mathcal{T} (\phi_2) = \mathcal{L}^{- 1} [ \mathcal{P} (\phi_1) - \mathcal{P} (\phi_2) ]
$$
and
$$
\aligned
\mathcal{P} (\phi_1) - \mathcal{P} (\phi_2) & = \phi_1 \mathcal{Q} (\phi_1) - \phi_2 \mathcal{Q} (\phi_2) \\
& = (\phi_1 - \phi_2) \mathcal{Q} (\phi_1) + \phi_2 ( \mathcal{Q} (\phi_1) - \mathcal{Q} (\phi_2) ).
\endaligned
$$
By \eqref{eq:Qphi-int}, we also have
$$
\mathcal{Q} (\phi_1) - \mathcal{Q} (\phi_2) = (p - 1) \int_0^1 \Big[ (\widehat{w} + s \phi_1)^{p - 2} - (\widehat{w} + s \phi_2)^{p - 2} \Big] {\rm d} s.
$$
Then
$$
| \mathcal{Q} (\phi_1) - \mathcal{Q} (\phi_2) | + | \nabla ( \mathcal{Q} (\phi_1) - \mathcal{Q} (\phi_2) ) | \leq C ( | \phi_1 - \phi_2 | + | \nabla ( \phi_1 - \phi_2 ) | ).
$$
By \eqref{eq:Qphi-esti}, we obtain that for any $t \geq t_0$,
$$
| \mathcal{P} (\phi_1) - \mathcal{P} (\phi_2) | + | \nabla ( \mathcal{P} (\phi_1) - \mathcal{P} (\phi_2) ) | \leq C (\epsilon (t) + R e^{- \nu t}) ( | \phi_1 - \phi_2 | + | \nabla ( \phi_1 - \phi_2 ) | ),
$$
and hence
$$
\| \mathcal{P} (\phi_1) - \mathcal{P} (\phi_2) \|_{ C_\nu^1 ([t_0, \infty) \times \Sp^{n - 1}) } \leq C (\epsilon (t_0) + R e^{- \nu t_0}) \| \varphi_1 - \varphi_2 \|_{ C_\nu^1 ([t_0, \infty) \times \Sp^{n - 1}) }.
$$
Hence, by Lemma \ref{lem:inverse} we get
$$
\aligned
\| \mathcal{T} (\phi_1) - \mathcal{T} (\phi_2) \|_{ C_\nu^{2, \alpha} ([t_0, \infty) \times \Sp^{n - 1}) } & \leq C (\epsilon (t_0) + R e^{- \nu t_0}) \| \phi_1 - \phi_2 \|_{ C_\nu^1 ([t_0, \infty) \times \Sp^{n - 1}) } \\
& \leq C (\epsilon (t_0) + R e^{- \nu t_0}) \| \phi_1 - \phi_2 \|_{ C_\nu^{2, \alpha} ([t_0, \infty) \times \Sp^{n - 1}) }.
\endaligned
$$
Thus, $\mathcal{T}$ is a contraction when $t_0$ is sufficiently large.

By the contraction mapping principle, there exists $\phi \in C_\nu^{2, \alpha} ([t_0, \infty) \times \Sp^{n - 1})$ satisfying $\mathcal{T} \phi = \phi$. Therefore, $w = \widehat{w} + \phi$ is a solution of \eqref{eq:w} in $(t_0, \infty) \times \Sp^{n - 1}$.
\end{proof}

\appendix

\section{Appendix}\label{Sec=05}

Let $\{ \lambda_i \}_{i \geq 0}$ be the sequence of eigenvalues (in the increasing order with multiplicity) of $- \Delta_\theta$ on $\Sp^{n - 1}$, and $\{ X_i \}_{i \geq 0}$ be a sequence of the corresponding normalized eigenfunctions of $- \Delta_\theta$ in $L^2 (\Sp^{n - 1})$, i.e., for each $i \geq 0$,
$$
- \Delta_\theta X_i = \lambda_i X_i ~~~~~~ \textmd{on} ~ \Sp^{n - 1}.
$$
Recall that $\lambda_0 = 0$, $ \lambda_1 = \cdots = \lambda_n = n - 1$ and $\lambda_{n + 1} = 2 n \dots$, and $X_i$ is a spherical harmonic of certain degree.

\subsection{A lower bound of $\rho_i$}\label{AppendixA}

Let $\{ \rho_i \}_{i \geq 1}$ be the sequence of positive constants defined as in Lemmas \ref{lem:Han21} and \ref{lem:Han22}. Here we give a lower bound of $\rho_i$ for each $i \geq 1$.

\begin{lemma}\label{lem:rhoi2}
Let $\xi$ be a positive solution of \eqref{eq:xi}.
\begin{enumerate}[label = \rm(\roman*)]
\item If $\xi$ is the constant solution, then
$$
\rho_i^2 = \lambda_i - n + 2 ~~~~~~ \textmd{for all} ~ i \geq 1.
$$

\item If $\xi$ is a nonconstant periodic solution, then
$$
\rho_i^2 > \lambda_i - \frac{3 n - 2}{2} ~~~~~~ \textmd{for all} ~ i \geq 1.
$$
\end{enumerate}
\end{lemma}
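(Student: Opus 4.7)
My plan is to reduce both parts to explicit pointwise control of the periodic potential
$$
Q_i(t) := \lambda_i + \frac{(n-2)^2}{4} - \frac{n+2}{n-2}\,K(0)\,\xi(t)^{4/(n-2)}
$$
in $L_i = -\frac{{\rm d}^2}{{\rm d}t^2} + Q_i(t)$. For part (i), the positive constant Fowler solution satisfies $K(0)\xi^{4/(n-2)} = (n-2)^2/4$ directly from \eqref{eq:xi}, so $Q_i$ collapses to the constant $\lambda_i - (n-2)$, which is strictly positive for $i \geq 1$ since $\lambda_i \geq n-1$. The constant-coefficient equation $L_i h = 0$ then has fundamental system $e^{\pm \sqrt{\lambda_i - n + 2}\,t}$, and comparison with Lemma \ref{lem:Han21}(ii) gives $\rho_i^2 = \lambda_i - n + 2$.

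For part (ii), the key will be a Rayleigh-type identity for the Floquet exponent. Lemma \ref{lem:Han22}(ii) produces a nontrivial $T$-periodic function $p_i^+$ (where $T$ is the period of $\xi$) such that $h(t) := e^{-\rho_i t}\, p_i^+(t)$ solves $L_i h = 0$; substituting this ansatz produces the reduced ODE $-(p_i^+)'' + 2\rho_i\,(p_i^+)' + (Q_i - \rho_i^2)\, p_i^+ = 0$. Multiplying by $p_i^+$ and integrating over one period, the cross-term $\int_0^T (p_i^+)' \, p_i^+\,{\rm d}t$ vanishes by periodicity, and integration by parts in the second-derivative term yields
$$
\rho_i^2 \;=\; \frac{\int_0^T \big(((p_i^+)')^2 + Q_i \,(p_i^+)^2\big)\,{\rm d}t}{\int_0^T (p_i^+)^2\,{\rm d}t} \;\geq\; \min_{t \in \R} Q_i(t).
$$
The problem is thereby reduced to proving the strict pointwise bound $K(0)\,\xi(t)^{4/(n-2)} < n(n-2)/4$.

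This last inequality comes from standard phase-plane analysis of \eqref{eq:xi}. Multiplying \eqref{eq:xi} by $\xi'$ and integrating exhibits the conserved Hamiltonian
$$
\mathcal{H}(\xi,\xi') := (\xi')^2 - \frac{(n-2)^2}{4}\xi^2 + \frac{(n-2)K(0)}{n}\,\xi^{2n/(n-2)}.
$$
The level set $\{\mathcal{H}=0\}$ is a homoclinic loop based at the saddle $(0,0)$, whose rightmost intersection with the $\xi$-axis occurs precisely where $K(0)\xi^{4/(n-2)} = n(n-2)/4$. Evaluating $\mathcal{H}$ at the constant equilibrium $(\xi_c,0)$ gives $\mathcal{H}(\xi_c,0)<0$, so every nonconstant bounded periodic orbit encircles $(\xi_c,0)$ strictly inside the loop, forcing $\xi_{\max} := \max_t \xi < \xi^\ast$ and therefore $K(0)\,\xi_{\max}^{4/(n-2)} < n(n-2)/4$. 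Plugging this back into the lower bound on $\min_t Q_i$,
$$
\rho_i^2 \;\geq\; \min_t Q_i(t) \;>\; \lambda_i + \frac{(n-2)^2}{4} - \frac{n+2}{n-2}\cdot\frac{n(n-2)}{4} \;=\; \lambda_i - \frac{3n-2}{2},
$$
which is exactly the claimed strict bound. The main obstacle I expect is conceptual rather than computational: recognizing the correct Rayleigh-type identity above. The identity is valid precisely because $p_i^+$ is a genuine periodic function (not merely quasi-periodic), as furnished by Lemma \ref{lem:Han22}(ii); given this, both the reduction and the separatrix estimate are routine.
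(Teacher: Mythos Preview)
Your proof is correct. Part (i) matches the paper exactly. For part (ii), both you and the paper reduce to the same pointwise bound $K(0)\,\xi(t)^{4/(n-2)} < n(n-2)/4$ obtained from the Hamiltonian/phase-plane analysis, but you extract the inequality $\rho_i^2 \geq \min_t Q_i(t)$ differently. The paper simply evaluates the reduced ODE for $p_i^+$ at a point $t_0$ where $p_i^+$ attains its positive maximum: there $(p_i^+)'(t_0)=0$ and $(p_i^+)''(t_0)\leq 0$, so the equation $(p_i^+)'' = (Q_i - \rho_i^2)\,p_i^+ + 2\rho_i (p_i^+)'$ forces $\rho_i^2 \geq Q_i(t_0)$ at that single point. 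Your Rayleigh-quotient identity, obtained by multiplying by $p_i^+$ and integrating over a period, is an equally valid and slightly more informative route: it yields the extra nonnegative term $\int_0^T ((p_i^+)')^2\,{\rm d}t$ in the numerator, so in principle it could give a sharper lower bound, though for the purpose of this lemma both arguments land on the same strict inequality $\rho_i^2 > \lambda_i - (3n-2)/2$. The paper's pointwise argument is marginally more elementary; your integral identity is the natural variational formulation and generalizes more readily.
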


\begin{proof} If $\xi$ is the positive constant solution of \eqref{eq:xi}, then
$$
K(0) \xi^\frac{4}{n - 2} = \frac{(n - 2)^2}{4} ~~~~~~ \textmd{and} ~~~~~~ L_i = - \frac{ {\rm d}^2 }{ {\rm d} t^2 } + \lambda_i - n + 2.
$$
For every $i \geq 1$, ${\rm Ker} (L_i)$ obviously has a basis $e^{- \rho_i t}$ and $e^{\rho_i t}$, where $\rho_i^2 = \lambda_i - n + 2$.

If $\xi$ is a positive nonconstant periodic solution of \eqref{eq:xi}, then by using the phase plane method we have
\begin{equation}\label{eq:xibound}
0 < K(0) \xi^\frac{4}{n - 2} < \frac{n (n - 2)}{4} ~~~~~~ \textmd{in} ~ \R.
\end{equation}
By Lemma \ref{lem:Han22}, there exists a nonzero smooth periodic function $f$ on $\R$ such that $e^{- \rho_i t} f \in {\rm Ker} (L_i)$. Therefore,
$$
f'' = \bigg[ - \rho_i^2 + \lambda_i + \frac{(n - 2)^2}{4} - \frac{n + 2}{n - 2} K(0) \xi^\frac{4}{n - 2} \bigg] f + 2 \rho_i f'.
$$
Without loss of generality, we may assume that $f(t_0) = \max_{t \in \R} f(t) > 0$. Hence,
$$
0 \geq f'' (t_0) = \bigg[ - \rho_i^2 + \lambda_i + \frac{(n - 2)^2}{4} - \frac{n + 2}{n - 2} K(0) \xi^\frac{4}{n - 2} (t_0) \bigg] f(t_0).
$$
This together with \eqref{eq:xibound} implies that
$$
\rho_i^2 \geq \lambda_i + \frac{(n - 2)^2}{4} - \frac{n + 2}{n - 2} K(0) \xi^\frac{4}{n - 2} (t_0) > \lambda_i - \frac{3 n - 2}{2}.
$$
The proof is completed.
\end{proof}

Since $\lambda_{n + 1} = 2 n$, by Lemma \ref{lem:rhoi2} we immediately get $\rho_{n + 1} > \sqrt{ \frac{n + 2}{2} } \geq 2$ if $n \geq 6$. Recall that the second smallest element in the index set $\mathcal{I} [\xi]$ is $\mu_2 = \min \{ 2, \rho_{n + 1} \}$. Thus we have

\begin{corollary}\label{App-cor}
If $n \geq 6$, then $\mu_2 = 2$.
\end{corollary}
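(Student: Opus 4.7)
The plan is to read off the conclusion directly from Lemma \ref{lem:rhoi2} applied at index $i = n+1$, together with the definition $\mu_2 = \min\{2, \rho_{n+1}\}$ stated right after \eqref{indset-01}. Since $\mu_1 = \rho_1 = 1$ and $\widetilde{\rho}_1 = 2 = 2\rho_1$ is automatically in $\mathcal{I}[\xi]$, the only way $\mu_2$ could fail to equal $2$ is if some $\rho_i$ with $i \geq 2$ slips below $2$. But $\rho_1 = \cdots = \rho_n = 1$ by Lemmas \ref{lem:Han21} and \ref{lem:Han22}, so the only candidate is $\rho_{n+1}$, whence $\mu_2 = \min\{2, \rho_{n+1}\}$. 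It therefore suffices to check $\rho_{n+1} \geq 2$ in dimension $n \geq 6$.

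For this, I would simply plug $i = n+1$ (so $\lambda_{n+1} = 2n$) into the two cases of Lemma \ref{lem:rhoi2}. In the constant case,
\[
\rho_{n+1}^2 \;=\; \lambda_{n+1} - n + 2 \;=\; n + 2,
\]
which is $\geq 8 > 4$ once $n \geq 6$. In the nonconstant case,
\[
\rho_{n+1}^2 \;>\; \lambda_{n+1} - \frac{3n-2}{2} \;=\; 2n - \frac{3n-2}{2} \;=\; \frac{n+2}{2},
\]
which is $\geq 4$ once $n \geq 6$. Hence $\rho_{n+1} > 2$ in both cases when $n \geq 6$, so $\min\{2, \rho_{n+1}\} = 2$ and $\mu_2 = 2$.

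There is essentially no obstacle here: the content of the corollary is already packaged inside Lemma \ref{lem:rhoi2}, and the computation is one line in each case. The only thing worth double-checking is the bookkeeping that $\mu_2$ really equals $\min\{2, \rho_{n+1}\}$; this follows because $2 = 2\rho_1 = \widetilde{\rho}_1 \in \mathcal{I}_2 \subset \mathcal{I}[\xi]$ and $\rho_{n+1}$ is the next candidate strictly larger than $\rho_n = 1$ in $\mathcal{I}_1$, so no element of $\mathcal{I}[\xi]$ lies strictly between $1$ and $\min\{2,\rho_{n+1}\}$. The entire proof will therefore consist of citing Lemma \ref{lem:rhoi2}, performing the two-line arithmetic with $\lambda_{n+1} = 2n$, and concluding.
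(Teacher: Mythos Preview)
Your proof is correct and follows essentially the same approach as the paper: the paper's argument is the one-line observation that $\lambda_{n+1}=2n$ together with Lemma~\ref{lem:rhoi2} gives $\rho_{n+1} > \sqrt{(n+2)/2} \geq 2$ for $n \geq 6$, whence $\mu_2 = \min\{2,\rho_{n+1}\} = 2$. You simply treat the constant and nonconstant cases of Lemma~\ref{lem:rhoi2} separately and add a brief justification of the identity $\mu_2 = \min\{2,\rho_{n+1}\}$, but the substance is identical.
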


\subsection{Properties of $\mathcal{L}$}\label{AppendixB}

Throughout Appendix \ref{AppendixB}, we always assume that $n \geq 3$, $a$, $b$ and $p$ satisfy \eqref{eq:range}. For a fixed $i \geq 0$ and any $h \in C^2 (\R)$, we write
$$
\mathcal{L} (h X_i) = (\mathcal{L}_i h) X_i,
$$
where $\mathcal{L}$ is defined as in \eqref{eq:ckn-L} and $\{ \lambda_i, X_i (\theta) \}$ is the eigendata of $-\Delta_\theta$ on $\Sp^{n - 1}$. Then
\begin{equation}\label{eq:ckn-Li}
\mathcal{L}_i = - \frac{ {\rm d}^2 }{ {\rm d} t^2 } + \lambda_i + \frac{(n - 2 a - 2)^2}{4} - (p - 1) \zeta^{p - 2}.
\end{equation}

\begin{lemma}\label{lem:ckn-Lc}
Let $\zeta$ be the positive constant solution of \eqref{eq:zeta}.
\begin{enumerate}[label = \rm(\roman*)]
\item For $i = 0$, ${\rm Ker} (\mathcal{L}_0)$ has a basis $\cos (\omega_{a, p} t)$ and $\sin (\omega_{a, p} t)$, where
$$
\omega_{a, p} = \frac{ (n - 2 a - 2) \sqrt{p - 2} }{2}.
$$

\item There exists an increasing sequence of positive constants $\{ \sigma_i \}_{i \geq 1}$, divergent to $\infty$, such that for any $i \geq 1$, ${\rm Ker} (\mathcal{L}_i)$ has a basis $e^{- \sigma_i t}$ and $e^{\sigma_i t}$.
\end{enumerate}
\end{lemma}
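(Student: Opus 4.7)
The plan is to reduce $\mathcal{L}_i$ to a constant-coefficient operator by explicitly computing the constant $\zeta$ from \eqref{eq:zeta}, and then to read off the kernel. Setting $-\zeta''=0$ in \eqref{eq:zeta} yields $\zeta^{p-2}=(n-2a-2)^2/4$. Substituting into \eqref{eq:ckn-Li} gives
$$
\mathcal{L}_i = -\frac{d^2}{dt^2} + \lambda_i + \frac{(n-2a-2)^2}{4} - (p-1)\frac{(n-2a-2)^2}{4} = -\frac{d^2}{dt^2} + \lambda_i - \omega_{a,p}^2,
$$
where $\omega_{a,p}^2 = (p-2)(n-2a-2)^2/4$. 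For part (i), I would use $\lambda_0=0$ to conclude $\mathcal{L}_0 = -d^2/dt^2 - \omega_{a,p}^2$, whose kernel is spanned by $\cos(\omega_{a,p}t)$ and $\sin(\omega_{a,p}t)$.

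For part (ii), I would like to define $\sigma_i := \sqrt{\lambda_i - \omega_{a,p}^2}$, so that $\mathcal{L}_i = -d^2/dt^2 + \sigma_i^2$ has the stated exponential kernel. The content of the lemma is that $\sigma_i$ is a positive real number for every $i \ge 1$, and this is the step that actually requires the assumption \eqref{eq:range}. Since $\{\lambda_i\}_{i\ge 1}$ is nondecreasing with $\lambda_1 = n-1$, it suffices to verify the single inequality $\omega_{a,p}^2 < n - 1$. Using
$$
p - 2 = \frac{4(1-(b-a))}{n-2+2(b-a)},
$$
this reduces to the elementary inequality
$$
(n-2a-2)^2\bigl(1-(b-a)\bigr) < (n-1)\bigl(n-2+2(b-a)\bigr).
$$

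I would bound $(n-2a-2)^2 \le (n-2)^2$ using $a \ge 0$ and then show
$$
(n-2)^2\bigl(1-(b-a)\bigr) - (n-1)\bigl(n-2+2(b-a)\bigr) = -(n-2) - (b-a)\bigl((n-2)^2 + 2(n-1)\bigr) < 0,
$$
using $b - a \ge 0$. This gives strict positivity of $\sigma_i^2$ for all $i \ge 1$, and the fact that $\{\sigma_i\}$ is increasing and divergent follows directly from $\lambda_i \nearrow \infty$. The only place real care is needed is this inequality; the rest of the argument is bookkeeping on a constant-coefficient ODE, so I do not anticipate further obstacles.
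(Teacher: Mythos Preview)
Your proposal is correct and follows essentially the same route as the paper: compute $\zeta^{p-2}=(n-2a-2)^2/4$, reduce $\mathcal{L}_i$ to a constant-coefficient operator, and verify that the constant term $\lambda_i-\omega_{a,p}^2$ is positive for $i\ge 1$ using the constraints in \eqref{eq:range}. The only minor difference is that the paper obtains the sharper bound $\lambda_i-\omega_{a,p}^2\ge 1$ (with equality at $a=b=0$), whereas your estimate gives strict positivity; for the statement of the lemma your bound suffices.
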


\begin{proof} Since $\zeta$ is the positive constant solution of \eqref{eq:zeta}, we have
$$
\zeta^{p - 2} = \frac{(n - 2 a - 2)^2}{4} ~~~~~~ \textmd{and} ~~~~~~ \mathcal{L}_i = - \frac{ {\rm d}^2 }{ {\rm d} t^2 } + \lambda_i - \frac{(p - 2) (n - 2 a - 2)^2}{4}.
$$
Obviously, ${\rm Ker} (\mathcal{L}_0)$ has a basis $\cos (\omega_{a, p} t)$ and $\sin (\omega_{a, p} t)$. When $i \geq 1$,
$$
\lambda_i - \frac{(p - 2) (n - 2 a - 2)^2}{4} \geq n - 1 - \frac{(a + 1 - b) (n - 2 a - 2)^2}{n - 2 + 2 (b - a)} \geq 1,
$$
where we used the facts $0 \leq a < (n - 2)/2$ and $a \leq b < a + 1$ in the last inequality. It follows that for $i \geq 1$, ${\rm Ker} (\mathcal{L}_i)$ has a basis $e^{- \sigma_i t}$ and $e^{\sigma_i t}$, with
$$
\sigma_i = \sqrt{ \lambda_i - \frac{(p - 2) (n - 2 a - 2)^2}{4} }.
$$
The proof is completed.
\end{proof}

When $\zeta$ is a positive nonconstant periodic solution of \eqref{eq:zeta}, we need some ODE theory to analyze the kernel ${\rm Ker} (\mathcal{L}_i)$. When $a = b = 0$, the kernel ${\rm Ker} (\mathcal{L}_i)$ is understood by \cite{KMPS,MP,MPU} very well. Here we will adapt their proofs to study ${\rm Ker} (\mathcal{L}_i)$ for $a$ and $b$ satisfying \eqref{eq:range}. First, we recall some properties of nonconstant periodic solutions of \eqref{eq:zeta}. In \cite{HLW}, Hsia-Lin-Wang have proved that

\begin{lemma}\label{lem:Hamiltonian}
Let $\zeta$ be a positive nonconstant periodic solution of \eqref{eq:zeta}. Then the Hamiltonian energy
$$
H(\zeta', \zeta) = \frac{1}{2} (\zeta')^2 - \frac{(n - 2 a - 2)^2}{8} \zeta^2 + \frac{1}{p} \zeta^p
$$
is a negative constant depending only on $n$, $a$, $b$ and $\min_\R \zeta$.
\end{lemma}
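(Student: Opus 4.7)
The plan is the standard energy conservation argument for the autonomous second-order ODE \eqref{eq:zeta}, followed by a sign analysis at a critical point of $\zeta$.

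First, I would show that $H(\zeta',\zeta)$ is constant along solutions. Multiplying \eqref{eq:zeta} by $\zeta'$ and rearranging gives
$$
\frac{\rm d}{ {\rm d} t } H(\zeta',\zeta) = \zeta' \bigg[ \zeta'' - \frac{(n-2a-2)^2}{4} \zeta + \zeta^{p-1} \bigg] = 0,
$$
so $H$ is a constant, call it $H_0$, on $\R$. Since $\zeta$ is a positive nonconstant periodic solution, it must attain its minimum $\zeta_* := \min_\R \zeta > 0$ at some point $t_*$ where $\zeta'(t_*) = 0$. Evaluating at $t_*$ gives the clean formula
$$
H_0 = -\frac{(n-2a-2)^2}{8}\zeta_*^2 + \frac{1}{p}\zeta_*^p,
$$
which displays the dependence only on $n$, $a$, $b$ (through $p$) and $\zeta_* = \min_\R \zeta$.

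Next I would show that $H_0 < 0$. The constant solution $\zeta_c$ of \eqref{eq:zeta} satisfies $\zeta_c^{p-2} = (n-2a-2)^2/4$. Because $\zeta$ is nonconstant, standard uniqueness for \eqref{eq:zeta} applied at $t_*$ (where $\zeta'(t_*) = 0$) forces $\zeta(t_*) \neq \zeta_c$; combined with $\zeta''(t_*) \geq 0$ at a local minimum and the ODE $\zeta''(t_*) = \tfrac{(n-2a-2)^2}{4}\zeta_* - \zeta_*^{p-1}$, this gives the strict inequality
$$
\zeta_*^{p-2} < \frac{(n-2a-2)^2}{4}.
$$
Substituting into the formula for $H_0$ yields
$$
H_0 = \zeta_*^2 \bigg[ \frac{1}{p}\zeta_*^{p-2} - \frac{(n-2a-2)^2}{8} \bigg] < \zeta_*^2 \cdot \frac{(n-2a-2)^2}{4}\bigg( \frac{1}{p} - \frac{1}{2} \bigg),
$$
and since \eqref{eq:range} forces $p > 2$ (because $0 \leq b-a < 1$ makes the denominator of $p$ strictly less than $n$), the bracketed quantity is negative, so $H_0 < 0$.

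I do not expect a serious obstacle here; the argument is essentially the phase-plane/Hamiltonian structure for a conservative autonomous ODE. The only subtle point is checking that $\zeta_* \neq \zeta_c$ for a nonconstant periodic solution, which I would handle via ODE uniqueness at the critical point $t_*$. Note also that $n - 2a - 2 > 0$ by assumption \eqref{eq:range}, so the coefficient $(n-2a-2)^2/8$ does not degenerate.
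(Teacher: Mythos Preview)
Your argument is correct and is the standard Hamiltonian/phase-plane computation for this ODE. Note that the paper does not actually prove this lemma itself: it is quoted from \cite{HLW}, so there is no in-paper proof to compare against, but your energy-conservation plus sign analysis at the minimum is precisely the expected argument and would serve as a self-contained proof.
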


A direct consequence is
\begin{equation}\label{eq:zetaminmax}
0 < \min\nolimits_\R \zeta < \bigg( \frac{(n - 2 a - 2)^2}{4} \bigg)^{1/(p - 2)} < \max\nolimits_\R \zeta < \bigg( \frac{p (n - 2 a - 2)^2}{8} \bigg)^{1/(p - 2)}.
\end{equation}
For $\varepsilon \in (0, \varepsilon_*)$ with $\varepsilon_*^{p - 2} = (n - 2 a - 2)^2/4$, denote $\zeta_\varepsilon$ the positive periodic solution of \eqref{eq:zeta} satisfying $\min\nolimits_\R \zeta_\varepsilon = \zeta_\varepsilon (0) = \varepsilon$. Then any positive nonconstant periodic solution is a translation of $\zeta_\varepsilon$ for some $\varepsilon \in (0, \varepsilon_*)$. Since $\zeta_\varepsilon$ satisfies
$$
\zeta_\varepsilon (0) = \varepsilon ~~~~~~ \textmd{and} ~~~~~~ \zeta_\varepsilon' (0) = 0,
$$
$\zeta_\varepsilon$ is an even function. By Lemma \ref{lem:Hamiltonian}, we know that for all $t \in \R$,
\begin{equation}\label{eq:Hamiltonvalue}
\frac{1}{2} (\zeta_\varepsilon')^2 - \frac{(n - 2 a - 2)^2}{8} \zeta_\varepsilon^2 + \frac{1}{p} \zeta_\varepsilon^p = - \frac{(n - 2 a - 2)^2}{8} \varepsilon^2 + \frac{1}{p} \varepsilon^p.
\end{equation}
In particular,
$$
- \frac{(n - 2 a - 2)^2}{8} (\max\nolimits_\R \zeta_\varepsilon)^2 + \frac{1}{p} (\max\nolimits_\R \zeta_\varepsilon)^p = - \frac{(n - 2 a - 2)^2}{8} \varepsilon^2 + \frac{1}{p} \varepsilon^p.
$$

Let $T_\varepsilon$ be the period of $\zeta_\varepsilon$. Since $\zeta_\varepsilon$ is periodic and even, by \eqref{eq:Hamiltonvalue} there is only one maximum point in $[0, T_\varepsilon]$ and
$$
\zeta_\varepsilon (T_\varepsilon/2) = \max\nolimits_\R \zeta_\varepsilon,
$$
which yields that $\zeta_\varepsilon$ is increasing in $[0, T_\varepsilon/2]$ and decreasing in $[T_\varepsilon/2, T_\varepsilon]$. Hence, by \eqref{eq:Hamiltonvalue},
$$
\aligned
\frac{n - 2 a - 2}{4} T_\varepsilon & = \int_0^{T_\varepsilon/2} \bigg( \zeta_\varepsilon^2 - \frac{8}{p (n - 2 a - 2)^2} \zeta_\varepsilon^p - \varepsilon^2 + \frac{8}{p (n - 2 a - 2)^2} \varepsilon^p \bigg)^{- 1/2} \zeta_\varepsilon' {\rm d} t \\
& = \int_\varepsilon^{\max\nolimits_\R \zeta_\varepsilon} \bigg( s^2 - \frac{8}{p (n - 2 a - 2)^2} s^p - \varepsilon^2 + \frac{8}{p (n - 2 a - 2)^2} \varepsilon^p \bigg)^{- 1/2} {\rm d} s.
\endaligned
$$
It is easy to see that this integral grows as $- \ln \varepsilon + O(1)$.

Now, if $\mathcal{L}_i f = 0$ for a fixed $i \geq 0$, we write
\begin{equation}\label{eq:Z}
Z(t) =
\begin{pmatrix}
f(t) \\
f' (t)
\end{pmatrix}.
\end{equation}
Then
\begin{equation}\label{eq:odes}
Z' (t) =
\begin{pmatrix}
0 & 1 \\
\lambda_i + (n - 2 a - 2)^2/4 - (p - 1) \zeta^{p - 2} & 0
\end{pmatrix}Z(t) =: {\bf A} (t) Z(t),
\end{equation}
which is a linear system with periodic coefficients. By Floquet theorem (see, e,g., \cite[Theorem 5.1]{odebook}), a fundamental matrix of \eqref{eq:odes}, say ${\bf \Phi} (t)$, has the form
$$
{\bf \Phi} (t) = {\bf Q} (t) \exp (t {\bf R}),
$$
where ${\bf Q} (t)$ is a nonsingular periodic matrix and ${\bf R}$ is a constant matrix. Moreover, if $T_\varepsilon$ is the period of $\zeta$, then
$$
{\rm tr}\, {\bf R} = \frac{1}{T_\varepsilon} \int_0^{T_\varepsilon} {\rm tr}\, {\bf A} (t) {\rm d} t = 0
$$
and
$$
\exp (T_\varepsilon {\bf R}) = {\bf \Phi} (0)^{- 1} {\bf \Phi} (T_\varepsilon).
$$
Let $\bf C$ be a nonsingular constant matrix. Then ${\bf \Psi} (t) = {\bf \Phi} (t) {\bf C}$ is another fundamental matrix of \eqref{eq:odes}. It is easy to see that
$$
{\bf \Psi} (t) = ({\bf Q} (t) {\bf C}) \exp (t {\bf C}^{- 1} {\bf R} {\bf C}).
$$
Here ${\bf C}^{- 1} {\bf R} {\bf C}$ is similar to ${\bf R}$ and ${\bf Q} (t) {\bf C}$ is also a nonsingular periodic matrix. Since $\bf R$ has trace zero, we can take a proper matrix $\bf C$ such that ${\bf C}^{- 1} {\bf R} {\bf C}$ is equal to one of the following:
$$
\begin{pmatrix}
0 & 0 \\
0 & 0
\end{pmatrix}, ~~~
\begin{pmatrix}
0 & 1 \\
0 & 0
\end{pmatrix}, ~~~
\begin{pmatrix}
- \sigma & 0 \\
0 & \sigma
\end{pmatrix}, ~~~
\begin{pmatrix}
0 & \sigma \\
- \sigma & 0
\end{pmatrix} ~~~ \textmd{or} ~~~
\begin{pmatrix}
- \sigma - \kappa \sqrt{- 1} & 0 \\
0 & \sigma + \kappa \sqrt{- 1}
\end{pmatrix},
$$
for some constants $\sigma > 0$ and $\kappa \in \R \setminus \{ 0 \}$. Therefore, the corresponding exponent matrix $\exp (t {\bf C}^{- 1} {\bf R} {\bf C})$ is equal to one of the following:
$$
\begin{pmatrix}
1 & 0 \\
0 & 1
\end{pmatrix}, ~~~
\begin{pmatrix}
1 & t \\
0 & 1
\end{pmatrix}, ~~~
\begin{pmatrix}
e^{- \sigma t} & 0 \\
0 & e^{\sigma t}
\end{pmatrix}, ~~~
\begin{pmatrix}
\cos (\sigma t) & \sin (\sigma t) \\
- \sin (\sigma t) & \cos (\sigma t)
\end{pmatrix},
$$
or
$$
\begin{pmatrix}
e^{- \sigma t} (\cos (\kappa t) - \sin (\kappa t) \sqrt{- 1}) & 0 \\
0 & e^{\sigma t} (\cos (\kappa t) + \sin (\kappa t) \sqrt{- 1})
\end{pmatrix}.
$$
Let $( q_i^+ + p_i^+ \sqrt{- 1}, q_i^- + p_i^- \sqrt{- 1} )$ be the first row of ${\bf Q} (t) {\bf C}$, where $q_i^+$, $p_i^+$, $q_i^-$ and $p_i^-$ are real periodic functions, with the same period as $\zeta$. By \eqref{eq:Z}, the first row of $\bf \Psi$ is a basis of ${\rm Ker} (\mathcal{L}_i)$. Thus, we obtain that one of the following types is a basis of ${\rm Ker} (\mathcal{L}_i)$:
\begin{itemize}
\item Type I: $q_i^+$ and $q_i^-$,

\item Type II: $q_i^+$ and $q_i^- + t q_i^+$,

\item Type III: $q_i^+ e^{- \sigma_i t}$ and $q_i^- e^{\sigma_i t}$, with $\sigma_i > 0$,

\item Type IV: $q_i^+ \cos (\sigma_i t) - q_i^- \sin (\sigma_i t)$ and $q_i^+ \sin (\sigma_i t) + q_i^- \cos (\sigma_i t)$, with $\sigma_i > 0$,

\item Type V: $e^{- \sigma_i t} ( q_i^+ \cos (\kappa_i t) + p_i^+ \sin (\kappa_i t) )$ and $e^{\sigma_i t} ( q_i^- \cos (\kappa_i t) - p_i^- \sin (\kappa_i t) )$, with $\sigma_i > 0$ and $\kappa_i \in \R \setminus \{ 0 \}$.
\end{itemize}

\begin{lemma}\label{lem:ckn-L0}
Let $\zeta$ be a positive nonconstant periodic solution of \eqref{eq:zeta}. Then ${\rm Ker} (\mathcal{L}_0)$ has a basis $q_0^+$ and $q_0^- + c t q_0^+$, for some smooth periodic functions $q_0^+$ and $q_0^-$ on $\R$, with the same period as $\zeta$, and some constant $c$.
\end{lemma}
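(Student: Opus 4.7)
The plan is to apply the Floquet classification of Types I--V, set up in the preceding paragraphs for the second-order system \eqref{eq:odes}, to the scalar case $i = 0$, and to show that a nonconstant periodic $\zeta$ forces $\mathrm{Ker}(\mathcal{L}_0)$ into Type I or Type II, each of which fits the form claimed.

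The first step is to observe that differentiating \eqref{eq:zeta} in $t$ yields $\mathcal{L}_0 \zeta' = 0$. Since $\zeta$ is a nonconstant periodic solution with period $T_\varepsilon$, the function $\zeta'$ is a nontrivial smooth periodic element of $\mathrm{Ker}(\mathcal{L}_0)$, with the same period as $\zeta$. This already supplies the candidate $q_0^+$ (up to a constant multiple).

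The main observation is then monodromy-theoretic. The coefficient matrix $\mathbf{A}(t)$ in \eqref{eq:odes} is trace-free, so by Liouville's formula any fundamental matrix has constant determinant, and consequently the monodromy matrix $M := \Phi(0)^{-1}\Phi(T_\varepsilon)$ satisfies $\det M = 1$. The existence of the nontrivial periodic solution $\zeta'$ means that some nonzero initial vector $(\zeta'(t_0),\zeta''(t_0))^T$ (picking $t_0$ with $\zeta'(t_0) \neq 0$) is fixed by $M$, so $1$ is an eigenvalue of $M$; combined with $\det M = 1$, both Floquet multipliers must equal $1$. This excludes Types III, IV and V (whose monodromy multipliers are, respectively, real and distinct from $1$, non-real on the unit circle, and off the unit circle), leaving only Type I (where $M$ is similar to the identity) or Type II (where $M$ is a nontrivial Jordan block at $1$). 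Both types deliver a basis of $\mathrm{Ker}(\mathcal{L}_0)$ of the form $q_0^+$ and $q_0^- + c t q_0^+$, with $c = 0$ in Type I and $c \neq 0$ in Type II; periodicity of $q_0^\pm$ with period $T_\varepsilon$ follows because these functions are entries of the first row of the periodic matrix $\mathbf{Q}(t)\mathbf{C}$ from the Floquet decomposition, and $\mathbf{C}$ is a constant matrix.

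There is essentially no hard step: the lemma is an immediate consequence of the Floquet framework already recalled, once one notices that $\zeta'$ is automatically in $\mathrm{Ker}(\mathcal{L}_0)$ and that the trace-free structure of $\mathbf{A}(t)$ pins down the remaining Floquet multiplier. The only minor point of care is to ensure that the similarity transformation $\mathbf{C}$ used to bring $\mathbf{R}$ into either of the two admissible normal forms preserves periodicity of the rows of $\mathbf{Q}(t)\mathbf{C}$; this is trivial since $\mathbf{C}$ is constant and $\mathbf{Q}(t)$ is already $T_\varepsilon$-periodic.
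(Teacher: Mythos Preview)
Your argument is correct, but it proceeds differently from the paper. The paper constructs the second basis element explicitly: writing $\zeta(t) = \zeta_\varepsilon(t+T)$ for the appropriate $\varepsilon$ and $T$, it sets $\Phi_0^+(t) = \zeta'(t)$ (as you do) and $\Phi_0^-(t) = \frac{d}{d\delta}\big|_{\delta=0}\zeta_{\varepsilon+\delta}(t+T)$, checks linear independence by evaluating at $t = -T$, and then differentiates the identity $\zeta_{\varepsilon+\delta}(t+T) = \zeta_{\varepsilon+\delta}(t+T+T_{\varepsilon+\delta})$ in $\delta$ to obtain $\Phi_0^-(t+T_\varepsilon) = \Phi_0^-(t) - T_\varepsilon'\,\Phi_0^+(t)$, from which the claimed form (and the identification of $c$ with a multiple of $T_\varepsilon'$) follows by induction. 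Your route is purely monodromy-theoretic: the periodic Jacobi field $\zeta'$ forces one Floquet multiplier to be $1$, Liouville's formula forces the other to be $1$ as well, and the Floquet normal form then lands in Type I or II. Your approach is shorter and uses nothing beyond the Floquet framework already recalled; the paper's approach is more constructive and explains the geometric origin of the second solution (variation of the energy parameter) and of the constant $c$ (the derivative of the period). One small wording point: strictly speaking Type IV is not excluded by ``non-real multipliers'' alone, since $e^{\pm i\sigma T_\varepsilon}$ can equal $1$; but in that degenerate case the Type IV basis is itself $T_\varepsilon$-periodic and hence already of Type I form, so your conclusion is unaffected.
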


\begin{proof} Let $\varepsilon = \min\nolimits_\R \zeta$ and let $\zeta_\varepsilon$ be the positive periodic solution of \eqref{eq:zeta} with $\varepsilon = \zeta_\varepsilon (0) = \min\nolimits_\R \zeta_\varepsilon$. Then $\zeta (t) = \zeta_\varepsilon (t + T)$ for some $T \in [0, T_\varepsilon)$. Consider the following two families of solutions of \eqref{eq:zeta}:
$$
\{ \zeta_\varepsilon (t + T + \tau) \}_{\tau \in \R} ~~~~~~ \textmd{and} ~~~~~~ \{ \zeta_{\varepsilon + \delta} (t + T) \}_{\delta \in (- \varepsilon, \varepsilon_* - \varepsilon)}.
$$
Then
$$
\Phi_0^+ (t) := \frac{ {\rm d} }{ {\rm d} \tau } \bigg|_{\tau = 0} \zeta_\varepsilon (t + T + \tau) ~~~~~~ \textmd{and} ~~~~~~ \Phi_0^- (t) := \frac{ {\rm d} }{ {\rm d} \delta } \bigg|_{\delta = 0} \zeta_{\varepsilon + \delta} (t + T)
$$
are two solutions in ${\rm Ker} (\mathcal{L}_0)$. Obviously, $\Phi_0^+ (t) = \zeta' (t)$, $\Phi_0^+ (- T) = 0$ and $\Phi_0^- (- T) = 1$. So $\Phi_0^+$ and $\Phi_0^-$ are two linearly independent functions. Moreover, since
$$
\zeta_{\varepsilon + \delta} (t + T) = \zeta_{\varepsilon + \delta} (t + T + T_{\varepsilon + \delta}),
$$
differentiating with respect to $\delta$ at $0$ we get
\begin{equation}\label{eq:Phi0-}
\Phi_0^- (t) = \Phi_0^- (t + T_\varepsilon) + \Phi_0^+ (t) T_\varepsilon'.
\end{equation}
By induction, we have
\begin{equation}\label{eq:Phi0-k}
\Phi_0^- (t + k T_\varepsilon) = \Phi_0^- (t) - k \Phi_0^+ (t) T_\varepsilon' ~~~~~~ \textmd{for} ~ k \in \Z.
\end{equation}
If $T_\varepsilon' = 0$, then by \eqref{eq:Phi0-}, $\Phi_0^-$ is a periodic function with the same period as $\zeta$, and thus $\{ \Phi_0^+, \Phi_0^- \}$ is a basis of Type I. If $T_\varepsilon' \neq 0$, then by \eqref{eq:Phi0-k}, $\{ \Phi_0^+, \Phi_0^- \}$ is a basis of Type II.
\end{proof}

\begin{lemma}\label{lem:ckn-Ln}
Let $\zeta$ be a positive nonconstant periodic solution of \eqref{eq:zeta}. Then there exists an increasing sequence of positive constants $\{ \sigma_i \}_{i \geq 1}$, divergent to $\infty$, such that for any $i \geq 1$, ${\rm Ker} (\mathcal{L}_i)$ has a basis $e^{- \sigma_i t} q_i^+$ and $e^{\sigma_i t} q_i^-$, for some smooth periodic functions $q_i^+$ and $q_i^-$ on $\R$, with the same period as $\zeta$.
\end{lemma}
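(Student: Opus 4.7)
The plan is to apply the Floquet--Hill framework developed just before the lemma to the periodic ODE system \eqref{eq:odes}, and for each fixed $i\ge 1$ to exclude Types I, II, IV, V in turn, leaving Type III as the only possibility.

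First I would rule out Type V by reality. The coefficient matrix $\mathbf{A}(t)$ in \eqref{eq:odes} is real with $\mathrm{tr}\,\mathbf{A}\equiv 0$, so the monodromy matrix $M_i$ over one period $T_\varepsilon$ is real with $\det M_i=1$. Its two eigenvalues are therefore either both real (a reciprocal pair) or a complex conjugate pair on the unit circle; in particular, the combination $\sigma>0$ and $\kappa\ne 0$ required for Type V is impossible.

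Second, I would rule out Types I, II, IV for each $i\ge 1$ via spectral analysis. These three types correspond precisely to $|\mathrm{tr}\,M_i|\le 2$. Setting
$$V_0(t):=\frac{(n-2a-2)^2}{4}-(p-1)\zeta(t)^{p-2},\qquad H_0:=-\frac{d^2}{dt^2}+V_0,$$
the equation $\mathcal{L}_i f=0$ is the eigenvalue problem $H_0 f=-\lambda_i f$, so Types I, II, IV for $\mathcal{L}_i$ are equivalent to $-\lambda_i\in\sigma_{L^2(\R)}(H_0)$. By Lemma \ref{lem:ckn-L0}, $0\in\sigma(H_0)$ as a periodic eigenvalue whose eigenfunction $\zeta'$ has exactly two zeros per period $T_\varepsilon$; Hill's oscillation theorem then pins $0$ to a specific low-lying band edge of $H_0$. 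What remains is to establish the quantitative lower bound $\mu_0(H_0)>-\lambda_1=-(n-1)$ on the ground-state eigenvalue of $H_0$, via a Rayleigh-quotient test-function argument that combines the Hamiltonian identity of Lemma \ref{lem:Hamiltonian} and the sharp bounds \eqref{eq:zetaminmax} on $\max_\R\zeta$ and $\min_\R\zeta$. With this in hand, $-\lambda_i\le -\lambda_1<\mu_0(H_0)$ for every $i\ge 1$, so $-\lambda_i$ lies in the first resolvent interval $(-\infty,\mu_0(H_0))$ of $H_0$, forcing $|\mathrm{tr}\,M_i|>2$.

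Having excluded Types I, II, IV, V, the remaining Type III furnishes a basis $e^{-\sigma_i t}q_i^+$ and $e^{\sigma_i t}q_i^-$ of $\mathrm{Ker}(\mathcal{L}_i)$ with $q_i^\pm$ smooth and $T_\varepsilon$-periodic, where $\sigma_i>0$ is defined via the Floquet multipliers $e^{\pm\sigma_i T_\varepsilon}$ of $M_i$. Monotonicity $\sigma_i\le\sigma_{i+1}$ follows from $V_i\le V_{i+1}$ (since $\lambda_i\le\lambda_{i+1}$) together with Sturm comparison, while divergence $\sigma_i\to\infty$ follows from the rough bound $\sigma_i^2\ge\lambda_i-(p-1)\max_\R\zeta^{p-2}-\frac{(n-2a-2)^2}{4}$ valid for all sufficiently large $i$, obtained by comparing \eqref{eq:odes} with its constant-coefficient majorant. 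The hardest part will be the quantitative ground-state bound $\mu_0(H_0)>-(n-1)$: the naive Rayleigh quotient with $f=\zeta$ only yields $\mu_0(H_0)\le -(p-2)\int_0^{T_\varepsilon}\zeta^p\,dt/\int_0^{T_\varepsilon}\zeta^2\,dt<0$, so obtaining the matching lower bound will require a more delicate test function (for instance $\zeta^\alpha$ for a well-chosen $\alpha$ depending on \eqref{eq:range}) combined with careful use of the Hamiltonian constraint, or alternatively a direct Sturm--Pr\"ufer zero-counting argument comparing solutions of $\mathcal{L}_i f=0$ with the two-zero-per-period solution $\zeta'\in\mathrm{Ker}(\mathcal{L}_0)$.
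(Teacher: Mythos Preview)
Your overall architecture is sound and in one respect cleaner than the paper's: the reality argument for excluding Type~V (real monodromy with determinant $1$ forces multipliers to be either a real reciprocal pair or a conjugate pair on the unit circle) is more direct than the paper's route, which instead compares a Type~V solution with the exponential $\Psi_i^+$ via the maximum principle and forces $\kappa_i T_\varepsilon/(2\pi)\in\Z$.

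The genuine gap is exactly where you flag it: the bound $\mu_0(H_0)>-(n-1)$. You correctly observe that the naive test function $\zeta$ only gives an upper bound, and you suggest trying $\zeta^\alpha$; but you have not identified $\alpha$ or verified the inequality, and this is the one non-routine step in the whole lemma. The paper's solution is precisely your $\zeta^\alpha$ idea with $\alpha=p/2$. Rather than phrasing it as a Rayleigh quotient, the paper conjugates: a direct computation using \eqref{eq:zeta} and the Hamiltonian identity of Lemma~\ref{lem:Hamiltonian} gives
\[
\zeta^{-p/2}\mathcal{L}_1\,\zeta^{p/2}=-\frac{d^2}{dt^2}-p\,\zeta^{-1}\zeta'\frac{d}{dt}+\Big[(n-1)-\tfrac{(p-2)(p+2)(n-2a-2)^2}{16}-\tfrac{p(p-2)}{2}\zeta^{-2}H\Big],
\]
and under \eqref{eq:range} the bracket is strictly positive (the $H$-term is positive since $H<0$, and the constant part is checked to be $\ge 0$). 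Thus the conjugated operator satisfies the maximum principle, which is equivalent to $\zeta^{p/2}$ being a positive strict supersolution of $\mathcal{L}_1$, hence to $\mu_0(H_0)>-(n-1)$ in your language. Once you plug in $\alpha=p/2$ and carry out this computation, your spectral/Hill argument goes through.

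One further remark: the paper splits the analysis into $i\ge n+1$, where the potential in $\mathcal{L}_i$ is already pointwise positive so the maximum principle holds without conjugation, and $1\le i\le n$, where the conjugation trick is needed. Your approach handles all $i\ge 1$ uniformly via the single ground-state bound, which is arguably tidier; the paper's split has the minor advantage that for $i\ge n+1$ it also yields positivity of $q_i^+$ and a clean maximum-principle proof of $\sigma_{i+1}\ge\sigma_i$.
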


\begin{proof} We first study the operator $\mathcal{L}_i$ for $i \geq n + 1$. When $i \geq n + 1$, we have
$$
\aligned
\lambda_i + \frac{(n - 2 a - 2)^2}{4} - (p - 1) \zeta^{p - 2} & > 2 n + \frac{(n - 2 a - 2)^2}{4} - \frac{p (p - 1) (n - 2 a - 2)^2}{8} \\
& \geq 2 n - \frac{(p - 2) (p + 1) (n - 2 a - 2)^2}{8} \\
& \geq \frac{n + 2}{2},
\endaligned
$$
where we used \eqref{eq:zetaminmax} in the first inequality and $0 \leq a < (n - 2)/2$, $a \leq b < a + 1$ in the last inequality. Therefore, for $i \geq n + 1$, $\mathcal{L}_i$ satisfies the maximum principle, i.e., if $\mathcal{L}_i f = 0$, then $f$ does not admit local positive maximum and local negative minimum. Obviously, functions of Type I and Type II can not be a basis of ${\rm Ker} (\mathcal{L}_i)$.

Suppose that $f = q_i^+ \cos (\sigma_i t) - q_i^- \sin (\sigma_i t) \in {\rm Ker} (\mathcal{L}_i)$. Without loss of generality, we can assume that $q_i^+ (0) = q_i^- (0) = 1$. Then we have for $k \in \Z$,
\begin{equation}\label{eq:fkTepsilon}
f(k T_\varepsilon) = \cos (k \sigma_i T_\varepsilon) - \sin (k \sigma_i T_\varepsilon) = \sqrt{2} \cos \bigg( k \sigma_i T_\varepsilon + \frac{\pi}{4} \bigg).
\end{equation}
If $\sigma_i T_\varepsilon/\pi \in \Q$, then $f$ is a periodic function, which can not belong to ${\rm Ker} (\mathcal{L}_i)$. If $\sigma_i T_\varepsilon/\pi \notin \Q$, it follows from \eqref{eq:fkTepsilon} and Weyl's equidistribution theorem that $f$ admits local positive maximum and local negative minimum. This is a contradiction.

Suppose that $h = e^{- \sigma_i t} ( q_i^+ \cos (\kappa_i t) + p_i^+ \sin (\kappa_i t) ) \in {\rm Ker} (\mathcal{L}_i)$. Without loss of generality, we can assume that $q_i^+ (0) = p_i^+ (0) = 1$. Then we have for $k \in \Z$,
\begin{equation}\label{eq:hkTepsilon}
h(k T_\varepsilon) = e^{- k \sigma_i T_\varepsilon} ( \cos (k \kappa_i T_\varepsilon) + \sin (k \kappa_i T_\varepsilon) ) = \sqrt{2} e^{- k \sigma_i T_\varepsilon} \cos \bigg( k \kappa_i T_\varepsilon - \frac{\pi}{4} \bigg).
\end{equation}
Set
$$
\Psi_i^+ (t) := \exp \bigg( - t \sqrt{ \lambda_i + \frac{(n - 2 a - 2)^2}{4} } \bigg).
$$
Then
$$
\mathcal{L}_i (h - \Psi_i^+) = (p - 1) \zeta^{p - 2} \Psi_i^+ > 0.
$$
Note that $(h - \Psi_i^+) (0) = 0$ and $\lim\nolimits_{t \to + \infty} (h - \Psi_i^+) (t) = 0$. By the maximum principle of $\mathcal{L}_i$, we have
$$
h(t) = e^{- \sigma_i t} ( q_i^+ \cos (\kappa_i t) + p_i^+ \sin (\kappa_i t) ) \geq \exp \bigg( - t \sqrt{ \lambda_i + \frac{(n - 2 a - 2)^2}{4} } \bigg) ~~~~~~ \textmd{for} ~ t \geq 0.
$$
This together with \eqref{eq:hkTepsilon} implies that
$$
\cos \bigg( k \kappa_i T_\varepsilon - \frac{\pi}{4} \bigg) > 0 ~~~~~~ \textmd{for all} ~ k \in \N.
$$
Hence, $\kappa_i T_\varepsilon/(2 \pi) \in \Z$. It follows that $T_\varepsilon$ is a period of $\cos (\kappa_i t)$, and thus $h$ is in Type III.

Therefore, for $i \geq n + 1$, a basis of ${\rm Ker} (\mathcal{L}_i)$ must be of Type III. Let $q_i^+ e^{- \sigma_i t}$ and $q_i^- e^{\sigma_i t}$ be a basis of ${\rm Ker} (\mathcal{L}_i)$. Without loss of generality, we can assume that $q_i^+ (0) = q_i^- (0) = 1$. As above we have
$$
\mathcal{L}_i (q_i^+ e^{- \sigma_i t} - \Psi_i^+) = (p - 1) \zeta^{p - 2} \Psi_i^+ > 0.
$$
Note $(q_i^+ e^{- \sigma_i t} - \Psi_i^+) (0) = 0$ and $\lim\nolimits_{t \to + \infty} (q_i^+ e^{- \sigma_i t} - \Psi_i^+) (t) = 0$. By the maximum principle of $\mathcal{L}_i$, we have
$$
q_i^+ (t) e^{- \sigma_i t} \geq \exp \bigg( - t \sqrt{ \lambda_i + \frac{(n - 2 a - 2)^2}{4} } \bigg) ~~~~~~ \textmd{for} ~ t \geq 0.
$$
Thus, $q_i^+$ is a positive periodic function on $\R$.

Now we prove $\sigma_{i + 1} \geq \sigma_i$ for $i \geq n + 1$. Notice that
$$
\mathcal{L}_{i + 1} (q_i^+ e^{- \sigma_i t} - q_{i + 1}^+ e^{ - \sigma_{i + 1} t }) = (\lambda_{i + 1} - \lambda_i) q_i^+ e^{- \sigma_i t} \geq 0,
$$
$(q_i^+ e^{- \sigma_i t} - q_{i + 1}^+ e^{ - \sigma_{i + 1} t }) (0) = 0$ and $\lim\nolimits_{t \to + \infty} (q_i^+ e^{- \sigma_i t} - q_{i + 1}^+ e^{ - \sigma_{i + 1} t }) (t) = 0$. By the maximum principle of $\mathcal{L}_{i + 1}$, we have
$$
e^{ (\sigma_{i + 1} - \sigma_i) t } \geq \frac{ q_{i + 1}^+ (t) }{q_i^+ (t)} \geq \frac{ \min\nolimits_\R q_{i + 1}^+ }{ \max\nolimits_\R q_i^+ } > 0 ~~~~~~ \textmd{for} ~ t \geq 0.
$$
Thus, $\sigma_{i + 1} \geq \sigma_i$ for $i \geq n + 1$.

Finally, we study the space ${\rm Ker} (\mathcal{L}_1) = \cdots = {\rm Ker} (\mathcal{L}_n)$. Inspired by the argument in \cite[page 242]{KMPS}, we consider the conjugate operator $\zeta^{- p/2} \mathcal{L}_1 \zeta^{p/2}$. Then
$$
\zeta^{- p/2} \mathcal{L}_1 \zeta^{p/2} = - \frac{ {\rm d}^2 }{ {\rm d} t^2 } - p \zeta^{- 1} \zeta' \frac{ {\rm d} }{ {\rm d} t} + \bigg[ n - 1 - \frac{(p - 2) (p + 2) (n - 2 a - 2)^2}{16} - \frac{p (p - 2)}{2} \zeta^{- 2} H \bigg],
$$
where $H \equiv H(\zeta', \zeta) < 0$ is the Hamiltonian energy of $\zeta$. Moreover,
$$
n - 1 - \frac{(p - 2) (p + 2) (n - 2 a - 2)^2}{16} \geq n - 1 - \frac{4}{n - 2} \cdot \frac{4 (n - 1)}{n - 2} \cdot \frac{(n - 2)^2}{16} = 0,
$$
where we used the facts $0 \leq a < (n - 2)/2$ and $a \leq b < a + 1$ in the last inequality. Therefore, $\zeta^{- p/2} \mathcal{L}_1 \zeta^{p/2}$ satisfies the maximum principle, i.e., if $\zeta^{- p/2} \mathcal{L}_1 (\zeta^{p/2} h) = 0$, then $h$ does not admit local positive maximum and local negative minimum. Furthermore,
$$
{\rm Ker} (\zeta^{- p/2} \mathcal{L}_1 \zeta^{p/2}) = \zeta^{- p/2} {\rm Ker} (\mathcal{L}_1).
$$
It implies that a basis of ${\rm Ker} (\zeta^{- p/2} \mathcal{L}_1 \zeta^{p/2})$ must be one of Type I-V. As above, since $\zeta^{- p/2} \mathcal{L}_1 \zeta^{p/2}$ satisfies the maximum principle, a basis of ${\rm Ker} (\zeta^{- p/2} \mathcal{L}_1 \zeta^{p/2})$ must be of Type III, so is a basis of ${\rm Ker} (\mathcal{L}_1)$. Following the same idea, one can easily prove that $\sigma_{n + 1} > \sigma_n = \cdots = \sigma_1$. Eventually, as in the proof of Lemma \ref{lem:rhoi2}, we have
$$
\sigma_i^2 \geq \lambda_i - \frac{(p - 2) (p + 1) (n - 2 a - 2)^2}{8}.
$$
Thus, $\lim_{i \to \infty} \sigma_i = + \infty$. The proof of Lemma \ref{lem:ckn-Ln} is completed.
\end{proof}

Once Lemmas \ref{lem:ckn-Lc}, \ref{lem:ckn-L0} and \ref{lem:ckn-Ln} hold, adapting the proofs in \cite[Lemmas A.2 and A.8]{HLL} and in \cite[Theorem 2.9]{HL}, we have

\begin{lemma}\label{lem:sol-form}
Let $\gamma > 0$ be a constant, $m \geq 0$ be an integer, and $h$ be a smooth periodic function with the same period as $\zeta$. Then for $i \geq 0$, the equation
$$
\mathcal{L}_i \phi = t^m e^{- \gamma t} h(t)
$$
has a particular solution $\phi_*$ having the form of, for $\gamma \neq \sigma_i$,
$$
\phi_* (t) = e^{- \gamma t} \sum_{j = 0}^m t^j r_j (t),
$$
and for $\gamma = \sigma_i$,
$$
\phi_* (t) = e^{- \gamma t} \sum_{j = 0}^{m + 1} t^j r_j (t),
$$
where $r_0, \dots, r_{m + 1}$ are smooth periodic functions, with the same period as $\zeta$.
\end{lemma}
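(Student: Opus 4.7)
My strategy is to construct $\phi_*$ via the variation of parameters formula applied to the fundamental systems of ${\rm Ker}(\mathcal{L}_i)$ given in Lemmas \ref{lem:ckn-Lc}, \ref{lem:ckn-L0}, and \ref{lem:ckn-Ln}. Since $\mathcal{L}_i = -d^2/dt^2 + V_i(t)$ has no first-order term, the Wronskian of any independent pair of solutions is a nonzero constant, so the formula takes the clean shape
$$
\phi_*(t) = \frac{\phi_2(t)}{W}\int \phi_1(s) s^m e^{-\gamma s} h(s)\, ds - \frac{\phi_1(t)}{W}\int \phi_2(s) s^m e^{-\gamma s} h(s)\, ds.
$$

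\textbf{Key auxiliary tool.} The combinatorial content is the following antiderivative lemma (proved by Fourier expansion in the period of $\zeta$, or equivalently by iterated integration by parts): if $g$ is smooth periodic and $\alpha\in\R\setminus\{0\}$, then
$$
\int t^m e^{-\alpha t} g(t)\, dt = e^{-\alpha t}\sum_{j=0}^{m} t^j G_j(t),
$$
with each $G_j$ smooth periodic of the same period as $g$, because the Fourier denominators $-\alpha + ik\omega$ are bounded away from zero. When $\alpha=0$, the mean of $g$ contributes an extra linear factor, yielding $\int t^m g(t)\,dt = \sum_{j=0}^{m+1} t^j G_j(t)$. This is the only mechanism that produces extra powers of $t$.

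\textbf{Case analysis.} For $i\ge 1$, using the basis $\phi_1 = e^{-\sigma_i t}q_i^+$, $\phi_2 = e^{\sigma_i t}q_i^-$, the two integrands become $t^m e^{-(\sigma_i+\gamma)t}$ and $t^m e^{(\sigma_i-\gamma)t}$ multiplied by smooth periodic factors $q_i^\pm h$. The exponent $-(\sigma_i+\gamma)$ is always nonzero (both positive), so that integral always contributes a polynomial of degree $\le m$; the exponent $\sigma_i-\gamma$ is nonzero precisely when $\gamma\ne\sigma_i$. Multiplying back by $\phi_{2}$ or $\phi_{1}$ cancels the factors $e^{\pm\sigma_i t}$ and collects everything into $e^{-\gamma t}\sum_{j=0}^{m} t^j r_j(t)$ in the nonresonant case, and $e^{-\gamma t}\sum_{j=0}^{m+1} t^j r_j(t)$ when $\gamma=\sigma_i$, as required. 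For $i=0$ with constant $\zeta$, the basis $\{\cos(\omega_{a,p}t),\sin(\omega_{a,p}t)\}$ is handled identically after writing it as $\tfrac{1}{2}(e^{i\omega_{a,p}t}\pm e^{-i\omega_{a,p}t})$; since $\gamma>0$ none of the exponents $-\gamma\pm i\omega_{a,p}$ vanish, so we always land in the nonresonant case. For $i=0$ with nonconstant $\zeta$, the basis from Lemma \ref{lem:ckn-L0} is $q_0^+$ and $q_0^- + ct q_0^+$, and the resonance condition $\gamma=\sigma_0=0$ is excluded by $\gamma>0$; the built-in factor of $t$ in the second basis element is absorbed by the auxiliary tool and the output still has the stated shape.

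\textbf{Main obstacle.} The bookkeeping in the $i=0$, nonconstant-$\zeta$ case is the most delicate step, because the kernel basis itself already carries a polynomial factor, and one must verify that no extra power of $t$ sneaks in beyond the stated $m$ (respectively $m+1$). This is ultimately controlled by the fact that the coefficient exponent $-\gamma$ in that setting is nonzero, so every antiderivative encountered falls under the nonresonant branch of the auxiliary tool; the remaining computations are a direct unwinding of the variation-of-parameters identity, in the spirit of \cite[Lemma A.2]{HLL}.
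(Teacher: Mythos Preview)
Your proposal is correct and follows exactly the route the paper itself indicates: the paper gives no independent argument for this lemma but simply refers to \cite[Lemma~A.2]{HLL}, whose proof is precisely the variation-of-parameters construction coupled with the Fourier antiderivative lemma you describe. Your flagging of the $i=0$ Type~II case as the delicate point is apt; the cancellation of the spurious $t^{m+1}$ term there holds because the top periodic coefficients coming from $\int t^m e^{-\gamma t}(q_0^+h)\,dt$ and from $\int t^{m+1} e^{-\gamma t}(q_0^+h)\,dt$ both satisfy the same first-order equation $G'-\gamma G=q_0^+h$, hence coincide by uniqueness of the periodic solution when $\gamma\neq 0$.
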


\begin{lemma}\label{lem:sol-esti}
Let $\gamma > 0$ be a constant, $m \geq 0$ be an integer, and $f$ be a continuous function in $[1, \infty) \times \Sp^{n - 1}$, for all $(t, \theta) \in [1, \infty) \times \Sp^{n - 1}$,
$$
| f(t, \theta) | \leq C t^m e^{- \gamma t}.
$$
Let $\phi$ be a solution of $\mathcal{L} \phi = f$ in $(1, \infty) \times \Sp^{n - 1}$ such that $\phi (t, \theta) \to 0$ as $t \to \infty$ uniformly for $\theta \in \Sp^{n - 1}$.
\begin{enumerate}[label = \rm(\roman*)]
\item If $0 < \gamma \leq \sigma_1$, then for any $(t, \theta) \in (1, \infty) \times \Sp^{n - 1}$,
$$
| \phi (t, \theta) | \leq \bigg\{
\aligned
& C t^m e^{- \gamma t} ~~~~~~ & \textmd{if} & ~ 0 < \gamma < \sigma_1, \\
& C t^{m + 1} e^{- \gamma t} ~~~~~~ & \textmd{if} & ~ \gamma = \sigma_1.
\endaligned
$$

\item If $\sigma_l < \gamma \leq \sigma_{l + 1}$ for some positive integer $l$, then for any $(t, \theta) \in (1, \infty) \times \Sp^{n - 1}$,
$$
\bigg| \phi (t, \theta) - \sum_{i = 1}^l c_i q_i^+ (t) X_i (\theta) e^{- \sigma_i t} \bigg| \leq
\bigg\{
\aligned
& C t^m e^{- \gamma t} ~~~~~~ & \textmd{if} & ~ \sigma_l < \gamma < \sigma_{l + 1}, \\
& C t^{m + 1} e^{- \gamma t} ~~~~~~ & \textmd{if} & ~ \gamma = \sigma_{l + 1},
\endaligned
$$
with some constants $c_i$ for $i = 1, \dots, l$.
\end{enumerate}
\end{lemma}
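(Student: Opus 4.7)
The plan is to carry out a Fourier decomposition on $\Sp^{n-1}$ and reduce the estimate to one-dimensional ODE bounds for each mode, which are then reassembled. This parallels the argument of Han-Li-Li \cite{HLL} for the operator $L$; the main adaptation is to replace their structural lemmas for $\mathrm{Ker}(L_i)$ by our Lemmas \ref{lem:ckn-Lc}, \ref{lem:ckn-L0} and \ref{lem:ckn-Ln}, whose preparation was precisely the purpose of the preceding subsection.

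First I would expand both $\phi$ and $f$ in spherical harmonics, writing
\[
\phi(t,\theta) = \sum_{i\geq 0} \phi_i(t) X_i(\theta),\qquad f(t,\theta) = \sum_{i\geq 0} f_i(t) X_i(\theta),
\]
so that $\mathcal{L}_i \phi_i = f_i$ on $(1,\infty)$ with $\phi_i(t)\to 0$ as $t\to\infty$. The decay hypothesis on $f$ gives $|f_i(t)|\leq C_i t^m e^{-\gamma t}$ with coefficients controlled in terms of suitable weighted norms of $f$ against the $X_i$.

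Next, for each $i\geq 1$, the fundamental system $\{q_i^+(t)e^{-\sigma_i t},\,q_i^-(t)e^{\sigma_i t}\}$ of $\mathcal{L}_i$ from Lemmas \ref{lem:ckn-Lc} and \ref{lem:ckn-Ln} yields, via variation of parameters, a Green's function $G_i(t,s)$ built from these two solutions and the (periodic) Wronskian $W_i$. The unique solution of $\mathcal{L}_i\phi_i=f_i$ decaying at infinity has the form $\phi_i(t) = c_i q_i^+(t)e^{-\sigma_i t}+\int_1^\infty G_i(t,s) f_i(s)\,ds$ for some constant $c_i$; for $i=0$ I would invoke Lemma \ref{lem:ckn-L0} to deal with the possibly non-exponential kernel, selecting the particular solution that vanishes at infinity. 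Splitting the integral at $s=t$ and plugging in $|f_i(s)|\leq C s^m e^{-\gamma s}$ produces three regimes: when $\gamma<\sigma_i$ the integral is $O(t^m e^{-\gamma t})$; when $\gamma=\sigma_i$, resonance in the integral $\int^t s^m e^{(\sigma_i-\gamma)s}ds$ forces an extra factor of $t$; and when $\gamma>\sigma_i$, a leading term proportional to $q_i^+(t)e^{-\sigma_i t}$ must be pulled out of the remainder.

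Summing over $i$ and using $\sigma_i\to\infty$ (so that only finitely many indices satisfy $\sigma_i\leq\gamma$) collects the stated sum $\sum_{i=1}^l c_i q_i^+(t) X_i(\theta) e^{-\sigma_i t}$ and absorbs the remaining tail into the desired error bound, with an extra power of $t$ precisely in the resonant case $\gamma=\sigma_{l+1}$. The main technical obstacle is making the mode-by-mode estimates uniform in $i$ so that the spherical-harmonic series can be resummed pointwise: one needs uniform lower bounds on the Wronskians $W_i$ and uniform upper bounds on the periodic factors $q_i^\pm$, together with the coercive growth $\sigma_i^2\gtrsim \lambda_i$ for large $i$, all of which are available from the Floquet structure and the maximum-principle comparison established in the proof of Lemma \ref{lem:ckn-Ln}. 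Once these uniform controls are in place, the remainder of the argument is essentially a transcription of \cite[Lemmas A.2 and A.8]{HLL}, and the separation into the two cases $0<\gamma\leq\sigma_1$ and $\sigma_l<\gamma\leq\sigma_{l+1}$ in the conclusion is just a bookkeeping of how many leading terms must be extracted.
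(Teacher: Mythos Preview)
Your proposal is correct and is precisely the approach the paper takes: the paper does not spell out a proof of this lemma but simply states that, once Lemmas \ref{lem:ckn-Lc}, \ref{lem:ckn-L0} and \ref{lem:ckn-Ln} are in hand, the result follows by adapting the proofs of \cite[Lemmas A.2 and A.8]{HLL}. Your sketch of the spherical-harmonic decomposition, the variation-of-parameters analysis of each $\mathcal{L}_i\phi_i=f_i$ using the Floquet fundamental system, and the resummation via the growth $\sigma_i^2\gtrsim\lambda_i$ is exactly that adaptation.
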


\begin{lemma}\label{lem:inverse}
Let $\alpha \in (0, 1)$, $\nu > \sigma_1$ with $\nu \neq \sigma_i$ for any $i$, and $f \in C_\nu^{0, \alpha} ([t_0, \infty) \times \Sp^{n - 1})$. Then the equation
$$
\mathcal{L} \phi = f ~~~~~~ \textmd{in} ~ (t_0, \infty) \times \Sp^{n - 1}
$$
admits a solution $\phi \in C_\nu^{2, \alpha} ([t_0, \infty) \times \Sp^{n - 1})$, and
$$
\| \phi \|_{ C_\nu^{2, \alpha} ([t_0, \infty) \times \Sp^{n - 1}) } \leq C \| f \|_{ C_\nu^{0, \alpha} ([t_0, \infty) \times \Sp^{n - 1}) },
$$
where $C$ is a positive constant depending only on $n$, $a$, $b$, $\alpha$, $\nu$ and $\zeta$, independent of $t_0$. Moreover, the correspondence $f \mapsto \phi$ is linear.

Denote the correspondence $f \mapsto \phi$ by $\mathcal{L}^{- 1}$. Then
$$
\mathcal{L}^{- 1} : C_\nu^{0, \alpha} ([t_0, \infty) \times \Sp^{n - 1}) \to C_\nu^{2, \alpha} ([t_0, \infty) \times \Sp^{n - 1})
$$
is a bounded linear operator, and the bound of $\mathcal{L}^{- 1}$ does not depend on $t_0$.
\end{lemma}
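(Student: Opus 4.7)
The plan is to construct $\mathcal{L}^{-1}$ via spherical harmonic decomposition in the $\theta$ variable followed by a mode-by-mode Green's function, then upgrade the resulting weighted $L^\infty$ bound to $C_\nu^{2,\alpha}$ by standard interior Schauder estimates, closely mirroring the template of Han-Li \cite[Theorem 2.9]{HL} with the Floquet data for $\mathcal{L}$ now supplied by Appendix \ref{AppendixB}. Expanding $f(t,\theta) = \sum_{i \geq 0} f_i(t) X_i(\theta)$ and seeking $\phi$ in the same form reduces $\mathcal{L}\phi = f$ to the decoupled system $\mathcal{L}_i \phi_i = f_i$ on $[t_0, \infty)$, where $\mathcal{L}_i$ is given by \eqref{eq:ckn-Li}. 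By Lemmas \ref{lem:ckn-Lc} and \ref{lem:ckn-Ln}, for each $i \geq 1$ the operator $\mathcal{L}_i$ has kernel spanned by $\phi_{i,+}(t) = e^{-\sigma_i t} q_i^+(t)$ and $\phi_{i,-}(t) = e^{\sigma_i t} q_i^-(t)$ with periodic $q_i^\pm$ and nonzero constant Wronskian $W_i$; the $i = 0$ mode uses the basis of Lemma \ref{lem:ckn-L0} (or Lemma \ref{lem:ckn-Lc}(i) in the constant-$\zeta$ setting).

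For each mode I build $\phi_i$ by variation of parameters with integration limits matched to the target decay rate $e^{-\nu t}$. Since $\sigma_i \to \infty$ and $\nu \notin \{\sigma_i\}$, only finitely many indices are \emph{low} ($\sigma_i < \nu$) and the remaining \emph{high} indices satisfy $\sigma_i - \nu \geq c_0 > 0$. For low modes, set
$$
\phi_i(t) = -\phi_{i,+}(t) \int_t^\infty \frac{\phi_{i,-}(s) f_i(s)}{W_i}\, {\rm d}s + \phi_{i,-}(t) \int_t^\infty \frac{\phi_{i,+}(s) f_i(s)}{W_i}\, {\rm d}s,
$$
which is the unique $O(e^{-\nu t})$ solution (adding any multiple of $\phi_{i,+}$ would give slower decay, and $\phi_{i,-}$ grows). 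For high modes, set
$$
\phi_i(t) = \phi_{i,+}(t) \int_{t_0}^t \frac{\phi_{i,-}(s) f_i(s)}{W_i}\, {\rm d}s + \phi_{i,-}(t) \int_t^\infty \frac{\phi_{i,+}(s) f_i(s)}{W_i}\, {\rm d}s,
$$
where the $t_0$ boundary contribution only adds a multiple of $\phi_{i,+}$, which decays faster than $e^{-\nu t}$ because $\sigma_i > \nu$. For the $i = 0$ mode with a Jordan block (Lemma \ref{lem:ckn-L0}), the same formula works once one recognizes that the apparent polynomial factor $t$ inside $\phi_{0,-}$ cancels against the integrated moment $\int_t^\infty (s - t)\phi_{0,+} f_0\, {\rm d}s$, restoring pure $e^{-\nu t}$ decay. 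A direct calculation yields $|\phi_i(t)| \leq C_i e^{-\nu t}\sup_s e^{\nu s}|f_i(s)|$.

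The key uniform estimate is $\sup_i C_i < \infty$. High-frequency asymptotics of the Hill-type operator $\mathcal{L}_i = -\partial_t^2 + \lambda_i + (n - 2a - 2)^2/4 - (p - 1)\zeta^{p - 2}$ force $\sigma_i \sim \sqrt{\lambda_i}$, $q_i^\pm \to 1$ uniformly, and $|W_i| \sim 2 \sigma_i$ as $i \to \infty$; combined with $\inf_i |\sigma_i - \nu| > 0$, this yields $C_i = O(1/\sigma_i^2)$ for large $i$, while the finitely many low modes contribute bounded constants individually. Summing $\phi = \sum_i \phi_i X_i$ gives the pointwise bound $\|\phi\|_{C_\nu^0} \leq C \|f\|_{C_\nu^0}$ with $C$ independent of $t_0$. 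To upgrade to $C_\nu^{2,\alpha}$, apply interior Schauder estimates on each cylinder $[t - 1, t + 1] \times \Sp^{n - 1}$: since the coefficients of $\mathcal{L}$ depend only on $t$ and are uniformly bounded and smooth (by periodicity of $\zeta$), the Schauder constant is independent of the base point $t$, and multiplying by $e^{\nu t}$ converts interior $C^{2,\alpha}$ bounds into weighted ones. Linearity of $\mathcal{L}^{-1}$ is immediate from the linearity of the construction, and $t_0$-independence of the final constant comes from the fact that neither the Schauder estimates nor our choice of integration limits introduce any $t_0$-dependent factor. The principal obstacle is securing the uniform mode-wise bounds $\inf_i |W_i|/\sigma_i > 0$ and $\inf_i |\sigma_i - \nu| > 0$ rigorously from the Floquet structure of Appendix \ref{AppendixB}; once these are in hand, the rest of the argument is routine.
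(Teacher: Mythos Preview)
Your approach is exactly what the paper intends: the paper gives no self-contained argument for this lemma but simply defers to \cite[Theorem 2.9]{HL} and \cite[Lemmas A.2, A.8]{HLL} once the Floquet data of Lemmas \ref{lem:ckn-Lc}, \ref{lem:ckn-L0} and \ref{lem:ckn-Ln} are in place, and your sketch follows that template faithfully.

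There is, however, one real gap in your write-up. The sentence ``Summing $\phi = \sum_i \phi_i X_i$ gives the pointwise bound $\|\phi\|_{C_\nu^0} \leq C\|f\|_{C_\nu^0}$'' does not follow from your mode-wise estimates: you have $|\phi_i(t)| \leq C \lambda_i^{-1} e^{-\nu t}\|f\|_{C_\nu^0}$, but $\sum_{i} \lambda_i^{-1}|X_i(\theta)| = +\infty$ on $\Sp^{n-1}$ for every $n \geq 3$, so the triangle inequality cannot close the $L^\infty$ bound directly. The standard remedy is to pass through a weighted $L^2$ estimate first: using the uniform kernel bound $|G_i(t,s)| \leq C e^{-\sigma_{N+1}|t-s|}$ for all $i > N$ together with Minkowski's inequality in $\ell^2_i$, one obtains $\|\phi(t,\cdot)\|_{L^2(\Sp^{n-1})} \leq C e^{-\nu t}\|f\|_{C_\nu^0}$, and only then does interior elliptic regularity on each slab $[t-1,t+1]\times\Sp^{n-1}$ convert this into the weighted $C^{2,\alpha}$ bound. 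Consequently, your ``principal obstacle'' is misidentified: the asymptotics $|W_i| \sim 2\sigma_i$ and $\sigma_i \sim \sqrt{\lambda_i}$ follow easily from the maximum-principle comparisons already carried out in the proof of Lemma \ref{lem:ckn-Ln}, whereas the high-mode summation is the step that actually requires care.
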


\bigskip
\bigskip

\noindent Xusheng Du

\noindent School of Mathematical Sciences, Laboratory of Mathematics and Complex Systems, MOE \\
Beijing Normal University, Beijing 100875, China \\[1mm]
Email: \textsf{xduah@bnu.edu.cn}

\bigskip

\noindent Hui Yang

\noindent School of Mathematical Sciences \\
Shanghai Jiao Tong University, Shanghai 200240, China \\[1mm]
Email: \textsf{hui-yang@sjtu.edu.cn}

\end{document}